\numberwithin{equation}{section}
\newcommand{\e}{{\varepsilon}}
\newcommand{\E}{\mathds{E}}
\DeclareMathOperator{\Tr}{Tr}
\let \d \relax
\newcommand{\d}{\mathrm{d}}
\let\O\relax
\newcommand{\O}[1]{\mathcal{O}\left(#1\right)}
\newcommand{\unn}[2]{[\![#1,#2]\!]}
\DeclareMathOperator{\kurt}{Kurt}
\def\bet{\begin{theorem}}
\def\eet{\end{theorem}}
\def\bel{\begin{lemma}}
\def\eel{\end{lemma}}
\def\bas{\begin{ass}}
\def\eas{\end{ass}}
\def\bec{\begin{cor}}
\def\eec{\end{cor}}
\def\bed{\begin{defn}}
\def\eed{\end{defn}}
\def\bep{\begin{prop}}
\def\eep{\end{prop}}
\def\beq{\begin{equation}}
\def\eeq{\end{equation}}
\def\bea{\begin{equation*}}
\def\eea{\end{equation*}}
\def\bex{\begin{ex}}
\def\eex{\end{ex}}
\def\1{{\mathbbm 1}}
\def\benr{\begin{enumerate}[label=(\roman*)]}
\def\eenr{\end{enumerate}}
\def\A{\mathcal{A}}
\def\R{\mathbb{R}}
\newcommand{\bma}{\begin{bmatrix}}
\newcommand{\ema}{\end{bmatrix}}
\theoremstyle{plain}
\newtheorem{theorem}{Theorem}[section]
\newtheorem{lemma}[theorem]{Lemma}
\newtheorem{proposition}[theorem]{Proposition}
\theoremstyle{definition}
\newtheorem{definition}[theorem]{Definition}
\titleformat{\paragraph}[runin]{\itshape\normalsize}{\theparagraph}{}{}
\titleformat{\subparagraph}[runin]{\itshape\normalsize}{\theparagraph}{0em}{}
\titleformat{\section}[block]{\normalfont\filcenter}{\Large\thesection .}{.7em}{\Large\scshape}
\titleformat{\subsection}[runin]{\normalfont}{\large \bf \thesubsection .}{.5em}{\large\bf}[.]
\titleformat{\subsubsection}[runin]{\normalfont}{\bf \thesubsubsection .}{.5em}{\bf}[.]
\begin{document}

\title{\scshape\bfseries{Largest Eigenvalues of the Conjugate Kernel of Single-Layered Neural Networks.}}
\author{L. \textsc{Benigni}\thanks{This material is based upon work supported by the National Science Foundation under Grant No. DMS-1928930 while L. Benigni participated in a program hosted by the MSRI in Berkeley, California during the Fall 2021 semester.}\\\vspace{-0.15cm}\footnotesize{\it{University of Chicago}}\\\footnotesize{\it{lbenigni@uchicago.edu}}\and S. \textsc{P\'ech\'e}\thanks{Research was accomplished while supported by the Institut Universitaire de France.}\\\vspace{-0.15cm}\footnotesize{\it{LPSM, Universit\'e de Paris}}\\\footnotesize{\it{peche@lpsm.paris}}}
\date{}
\maketitle

\tikzset{every loop/.style={min distance=10mm,in=60,out=120,looseness=10}}
\tikzset{every node/.style={draw,circle, scale=1}}
\tikzset{every label/.append style={font=\selectfont}}
\begin{abstract}
\small{This paper is concerned with the asymptotic distribution of the largest eigenvalues for some nonlinear random matrix ensemble stemming from the study of neural networks.
More precisely we consider $M= \frac{1}{m} YY^\top$ with $Y=f(WX)$ where $W$ and $X$ are random rectangular matrices with i.i.d. centered entries. This models the data covariance matrix or the Conjugate Kernel of a single layered random Feed-Forward Neural Network.
The function $f$ is applied entrywise and can be seen as the activation function of the neural network.
We show that the largest eigenvalue has the same limit (in probability) as that of some well-known linear random matrix ensembles. In particular, we relate the asymptotic limit of the largest eigenvalue for the nonlinear model to that of an information plus noise random matrix, establishing a possible phase transition depending on the function $f$ and the distribution of $W$ and $X$. This may be of interest for applications to machine learning. }
\end{abstract}

\section{Introduction}

This article is concerned with the asymptotic behavior of the largest eigenvalue of the data covariance matrix of random feed-forward neural networks arising in machine learning.
Artificial neural networks have been developped in the late fifties  to give a mathematical modelisation of the brain behavior. They are nowadays used via machine learning in many fields of applications such as langage recognition, computer vision. We mention among other applications in image or speech recognition \cites{krizhevsky2012imagenet, hinton2012deep} or translation \cite{wu2016google}). It is also used now in video, style transfer, dialogues, games and countless other topics. We refer to \cite{Zdeborova} and \cite{schmidhuber2015deep} for an overview of the subject. 
Yet understanding the mathematical framework behind  learning is still missing. The main difficulty comes from the complexity of studying  non-convex functions of a very large number of parameters \cites{choromanska2015loss, pennington2017geometry}.  

An artificial neural network can be modeled as follows: some input column vector $x\in \mathbb{R}^{n_0}$ goes through a multistage architecture of alternated layers with both
linear and non linear functionals: let $g_i: \mathbb{R} \to \mathbb{R}$, $i=1, \ldots, L$ be some given \emph{activation  functions} and $W_i,i=1\ldots L$ be $n_i\times n_{i-1}$ matrices. The output vector after layer $L$ is 
\begin{equation}s_1=g_1(W_1x), \quad s_i=g_i(W_is_{i-1}), i=2, \ldots,L .\label{FFNN}\end{equation}
The functions $g_i$ are here applied componentwise. 
The matrices $W_i$ are the (synaptic) weights in the  layer $i$ and the activation function $g_i$ models the impact of the neurons in the architecture. 
Commonly used activation functions are $g(x)=\max(0,x)$ (known as the ReLU activation function for Rectified Linear Unit) or the sigmoid function $g(x)=(1+\e^{-x})^{-1}$. The parameter $L$ is called the depth of the neural network. 
In this article we are interested in the so-called extreme learning, which corresponds to a one layer network, and more precisely in the initialisation phase. We also make the assumption that the weight matrices $W_i$ are random: this is a usual simplifying assumption to try to understand the model in a rigorous framework but also models the weights at initialization.

Generally in supervised machine learning, one is given a $n_0\times m$ matrix dataset $X$ coinjointly with a target dataset $Z$ of size $d\times m$ to train the network. The parameter $m$ is here the sample size. The aim is to determine a function $h$ so that, given for instance a new data matrix $X'$, the output of the function $h(X')$ yields an acceptable approximation of the target. The parameters to be learned are here the weight matrices.  The error of the approximation in the training phase is measured through a loss function. 
In the context of Feed Forward Neural Networks as in (\ref{FFNN}), one of the commonly used learning method in high dimension is ridge regression (and $h$ is linear in $g_1(W_1X)$).   More precisely, in the one layer case ($L=1$) the loss function is 
$$B\in \mathbb{R}^{d\times n_1}\mapsto \mathcal{L}(B):= \frac{1}{2dm}||Z - B^\top (g_1(W_1 X))||_F^2 +\gamma || B||_F^2,$$ where $\gamma$ is a penalizing parameter.  The optimal matrix $B$ can then be proved to be proportional to $Y QZ^\top$ where $Y=(g_1(W_1 X))$ and 
\begin{equation}
	\label{defQ}Q=\left( \frac{1}{m}Y^\top Y +\gamma I\right)^{-1}. \end{equation}
 As a consequence, the performance of this learning procedure can be measured thanks to the asymptotic spectral properties of the matrix $M=\frac{1}{m}Y^\top Y $ which is called the Conjugate Kernel of the network. Indeed, for the one layer case, the expected training loss
can be proved to be related to the asymptotic e.e.d. (and Stieltjes transform) of $M$. It is given by 
\[
\mathbb{E} (\mathcal{L}(B))= -\frac{\gamma^2}{m}\frac{\partial}{\partial \gamma} \mathbb{E} (\text{Tr } Q),
\]
where $Q$ is given by (\ref{defQ}) and $\text{Tr}$ denotes the unnormalized trace. Here the expected value is evaluated with respect to the distribution of the weight matrix $W.$ Besides, the largest eigenvalues are also of interest since the training occurs most rapidly along the eigenvectors of the largest eigenvalues \cite{advani2020high}.

After the training phase, the testing phase brings the new data matrix $X'$ and uses the previous optimal $B$ to estimate the target. It is then compared to the target (again unused in the training phase). The testing performance has been shown numerically to be improved in the presence of outlying eigenvalues in \cite{LouartLiaoCouillet}. This may come from the fact that outlying eigenvalues and corresponding eigenvectors may bear some features characterizing the general type of the data $X$ or  $X'$. In particular, they showed numerically that the position of outliers depends on the distribution of $W$ and that the further away an outlying eigenvalue is the better the testing error. The behavior of extreme eigenvalues is also of interest for comparing different acceleration methods in the stochastic gradient methods used in the learning procedure (see e.g. \cite{Paquette}). More generally, theoretical results about kernel methods are of interest in machine learning in order to select the appropriate kernel as explained e.g. \cite{Jacot_etal}. This indeed depends on the task which is expected (see e.g. \cite{20}, \cite{10}, \cite{16}). In particular universality results are of special interest so as to get rid of the particular features of the distribution. Regarding applications to machine learning questions, the existence of outliers seems an artifact of the non linearity of the model. It gives some structural information on the neural network even if it seems difficult to extract some precise information on this.

The study of kernel matrices similar to our model  goes back to \cite{ElKaroui} and \cite{Singer}, regarding the asymptotic behavior of the empirical eigenvalue distribution. In these articles where two different matrix models are considered, the limiting e.e.d. is shown in both cases to coincide with that of a linear random matrix ensemble. Regarding the edge of the spectrum, the behavior of extreme eigenvalues for kernel matrix models has been investigated  in \cite{MontanariFan}. Therein the authors show that for odd activation functions, the largest eigenvalue of some kernel matrix sticks to the edge of the limiting e.e.d. which is proved to be the free convolution of the Wigner semicircle law and the Marcenko--Pastur one. Therein it is also shown that a symmetrized oulier may exit the support of the limiting e.e.d. for an even polynomial activation function. Recently \cite{Couillet21} have studied an analog of the model we consider: they derive the limiting behavior of the largest eigenvalues too for a similar model to the one studied in this paper except that they consider the expected kernel rather than the kernel itself when the data $X$ is specifically given by a mixture of Gaussians. They also give the asymptotic eigenspace of the largest eigenvalue which we cannot do in this paper.

\paragraph{}In the recent article \cite{BePe}, following \cite{PeWo}, we have investigated the asymptotic empirical eigenvalue distribution (e.e.d) for a nonlinear matrix ensembles of the form 
\[ 
Y=f\left (\frac{WX}{\sqrt n_0}\right )
\]
where both $W$ and $X$ are random matrices with i.i.d. centered entries and respective dimension $n_1\times n_0$ and $n_0\times m$. This is a toy model to understand extreme learning as both weights and data are taken random. The behavior of the spectrum is then quite well understood now in the large $n_0$ limit provided $n_1, n_0, m$ grow to infinity with the same speed.
The behavior of extreme eigenvalues had then been postponed as numerical simulations show that, quite intriguingly, due to the nonlinearity, some large eigenvalues may separate from the bulk of the spectrum.  In view of simulations, the presence of a certain number of outliers seems to depend on the distribution of the entries of $W$ and $X$, the dimensions of the matrix $n_0, n_1, m$  and the function $f$. This is the question we here investigate, establishing a threshold where some eigenvalues separate, giving some ideas of why one or two such eigenvalues may separate and explicit the role of the dimension. 
 We also mention that, the step further is to extend our result to the case of deterministic data $X$ as for the limiting e.e.d. This has been obtained by  \cite{Zhou} in the latter case and we devote the study of the edge of the spectrum to future work.

The article is organized as follows. Section \ref{sec:2} states our main results, about a possible separation of two eigenvalues. It also gives the main ideas for the proof. 
Section \ref{sec:3} is concerned with the behavior of the largest eigenvalues for an odd activation function where the largest eigenvalues stick to the bulk. Section \ref{sec:4} deals with the case of an even activation function where the limiting e.e.d. is the Marchenko--Pastur distribution. Last Section \ref{sec:5} is concerned with the general case by combining the two previous sections, where we cannot describe the asymptotic behavior of the largest eigenvalues in full generality but compares it to a linear model.
\section{Model and results \label{sec:2}}
Suppose $W$ is a $n_1\times n_0$ random matrix with i.i.d entries and $X$ is a $n_0\times m$ random matrix with i.i.d entries. The distributions of the entries of both matrices $W$ and $X$ are assumed to be centered: $\E X_{11}=\E W_{11}=0$ and with vanishing third moment $\E X_{11}^{3}=\E W_{11}^3=0$. \\
We also need the following assumption on the tails of $W$ and $X$: there exist constants $\vartheta_w,\,\vartheta_x>0$ and $\alpha>1$ such that for any $t>0$ we have

\begin{equation}\label{eq:assumwx}
\mathds{P}\left(
	\left\vert W_{11}\right\vert > t
\right)
\leqslant
e^{-\vartheta_w t^\alpha}
\quad\text{and}\quad
\mathds{P}\left(
	\left\vert X_{11}\right\vert > t
\right)
\leqslant
e^{-\vartheta_x t^\alpha}.
\end{equation}

We introduce the following notations, including the kurtoses of the distributions:
\[
\begin{gathered}
	\sigma_w^2\coloneqq\E{W_{11}^2},
	\quad
	\mu_{4,w}\coloneqq\E{W_{11}^4},
	\quad
	\kurt(W)=\frac{\mu_{4,w}}{\sigma_{w}^4}. 
	\\
	\sigma_x^2\coloneqq\E{X_{11}^2},
	\quad
	\mu_{4,x}\coloneqq\E{X_{11}^4},
	\quad
	\kurt(X)=\frac{\mu_{4,x}}{\sigma_{x}^4}.
\end{gathered}
\]
We also suppose that we are in the following random matrix regime: as $n_0\to \infty$ one has that
\[
\frac{n_0}{m}\to \phi,\quad \frac{n_0}{n_1}\to \psi,\quad\text{and}\quad\frac{n_1}{m}\to \gamma \coloneqq \frac{\phi}{\psi}.
\]
As an additional assumption, we also suppose that there exist positive constants $C_f$ and $c_f$ and $A_0>0$ such that for any $A\geqslant A_0$ and any $n\in\mathbb{N}$ we have,
\begin{equation}\label{eq:assum2f}
\sup_{x\in[-A,A]}
\vert f^{(n)}(x)\vert
\leqslant
C_fA^{c_fn}.
\end{equation}
These are very strong assumptions on the function $f$, in particular $f$ is real analytic, but this should be compared with the weaker ones we make on the distributions of $W$ and $X$ as we do not use any Gaussian concentration bounds as in \cites{LouartLiaoCouillet, Zhou} for instance.
Finally we define the following three parameters for a function $f$ such that $\E{[f(\sigma_w\sigma_x\mathcal{N}(0,1))]}=0$:
\begin{equation} \label{defipara}
\begin{gathered}
\theta_1(f)
=
\E{[f(\sigma_w\sigma_x\mathcal{N}(0,1))^2]},
\quad
\theta_2(f)
=
\E{\left[\sigma_w\sigma_xf'(\sigma_w\sigma_x\mathcal{N}(0,1))\right]}^2,
\\
\theta_3(f)
=
\E{\left[\frac{(\sigma_w\sigma_x)^2}{2}f''(\sigma_w\sigma_x \mathcal{N}(0,1))\right]}^2.
\end{gathered}
\end{equation}
\begin{remark} Note that a non centered distribution for $W$ and $X$ or removing the ``centering'' of the function $f$ given above may result in the existence of very large eigenvalues which are typicaly non-informative. These assumptions are crucial for our results.
\end{remark}
We give some relationships between these parameters in the following lemma.

\bel\label{l:param}
We have for any functions such that the parameters exist:
\begin{enumerate}
	\item $\theta_2(f)\leqslant \theta_1(f)$ with equality if, and only if, $f(x)=\alpha x$ for some $\alpha\in\mathbb{R}$.
	\item $\theta_3(f)\leqslant\frac{1}{2}\theta_1(f)$ with equality if, and only if, $f(x)=\alpha(x^2-\sigma_w^2\sigma_x^2)$ for some $\alpha\in\R$.
\end{enumerate}
\eel
\begin{proof}
This is an application of Stein's lemma.
\end{proof}
It has been shown in \cite{BePe} that the e.e.d. of the non linear random matrix
\[
M= \frac{1}{m}YY^\top\in\R^{n_1\times n_1} \quad\text{with}\quad Y_{ij}= f\left (\frac{(WX)_{ij}}{\sqrt n_0}\right )\quad\text{for }i\in\unn{1}{n_1}\text{ and }j\in\unn{1}{m}\]
 converges under the above assumptions to a deterministic probability measure $\mu$.
\bet[\cite{BePe}]\label{theo:bepe} Denote $\lambda_{n_1}\leqslant \dots\leqslant \lambda_1$ the eigenvalues of $M$, there exists a deterministic probability measure $\mu$ depending only on $\theta_1(f),\,\theta_2(f),\,\phi,$ and $\psi$ such that
\[
\frac{1}{n_1}\sum_{i=1}^{n_1}\delta_{\lambda_i}\xrightarrow[n_1\to\infty]{} \mu.
\]
\eet
 The distribution $\mu$ has been characterized in a series of papers (\cite{BePe}, \cite{Pe}, \cite{Zhou}). 
In particular it admits a compact support whose edges are denoted by $\bf{u_-}$ for the bottom edge and $\bf{ u_+}$ for the top edge. It is shown in \cite{BePe} that given an integer $q$, one has that
\begin{equation}
\int x^q d\mu= \mathfrak{m}_q(f),
\end{equation}
where 
\begin{equation}\label{eq:defmoment}
\mathfrak{m}_q(f)
:=
\sum_{I_i,I_j=0}^q\sum_{b=0}^{I_i+I_j+1}{\mathcal{A}}(q,I_i,I_j,b)\theta_1(f)^b\theta_2(f)^{q-b}\psi^{I_i+1-q}\phi^{I_j}.
\end{equation}
In the above formula, ${\mathcal{A}}(q,I_i,I_j,b)$ denotes the number of so-called \emph{admissible} graphs of length $2q$ with $b$ cycles of length $2$ with a given number of identifications. An admissible graph is here a bipartite connected graph with $2q$ edges, all of which belonging to exactly one cycle.
When $\theta_2(f)=0$, it has also been shown that the limiting distribution $\mu$ is the Marchenko--Pastur distribution with shape $\gamma=\frac{\phi}{\psi}$ and variance $\theta_1(f)$. More precisely, it has been proved in \cite{Pe} that $\mu$ is also given by the asymptotic empirical eigenvalue distribution of the linear model:
\begin{equation}\label{eq:linear}
\mu=\mathrm{limspec}\left[\frac{1}{m}
\left(
	\sqrt{\theta_1-\theta_2}\tilde{Z}+\sqrt{\frac{\theta_2}{n_0}}\tilde{W}\tilde{X}
\right)
\left(
\sqrt{\theta_1-\theta_2}\tilde{Z}+\sqrt{\frac{\theta_2}{n_0}}\tilde{W}\tilde{X}
\right)^\top\right]
\end{equation}
where $\tilde{Z}\in\mathbb{R}^{n_1\times m}$, $\tilde{W}\in\mathbb{R}^{n_1\times n_0}$ and $\tilde{X}\in\mathbb{R}^{n_0\times m}$ are independent matrices filled with i.i.d Gaussian random variables.

We start by giving a result of convergence to the edge of the spectrum for a specific class of activation functions.  
\begin{theorem}\label{theo:edge}Let $\lambda_1$ be the largest eigenvalue of $M.$ 
Then, if $f$ is such that $\theta_3(f)=0$
\[
\lambda_{1}\xrightarrow[n_1\rightarrow\infty]{} \bf{u_+}\quad\text{in probability}.
\]
\end{theorem}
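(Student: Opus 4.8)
\emph{Proof strategy.} The plan is to prove the two matching one-sided bounds $\liminf_n\lambda_1\geqslant\mathbf{u_+}$ and $\limsup_n\lambda_1\leqslant\mathbf{u_+}$ in probability. The lower bound is free: $\mathbf{u_+}$ being the top edge of $\mathrm{supp}\,\mu$, every neighbourhood of it carries positive $\mu$-mass, so by Theorem~\ref{theo:bepe} and the portmanteau lemma the number of eigenvalues of $M$ in $(\mathbf{u_+}-\e,\infty)$ is, with probability tending to one, at least a fixed positive fraction of $n_1$; in particular $\lambda_1>\mathbf{u_+}-\e$ with high probability for every $\e>0$. Hence only the upper bound is at stake, and for it I would use the moment method in its sharpest form: since $M\succeq 0$ one has the deterministic inequality $\Tr M^q\geqslant\lambda_1^q$, so Markov's inequality gives $\mathbb{P}(\lambda_1>\mathbf{u_+}+\e)\leqslant(\mathbf{u_+}+\e)^{-q}\,\mathbb{E}[\Tr M^q]$ for every $q$, and it is enough to exhibit a sequence $q=q_n\to\infty$ along which $\mathbb{E}[\Tr M^{q_n}]^{1/q_n}\to\mathbf{u_+}$. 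Note that no concentration input is needed for this step.

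To control $\mathbb{E}[\Tr M^q]=m^{-q}\sum\mathbb{E}[\,\prod Y\,]$, the sum running over closed bipartite paths of length $2q$, I would expand each entry $Y_{ij}=f\big((WX)_{ij}/\sqrt{n_0}\big)$ in the Hermite basis attached to a centred Gaussian of variance $\sigma_w^2\sigma_x^2$. Writing $f=\sum_{k\geqslant1}a_kH_k$, the centring assumption removes $a_0$, while $\theta_3(f)=a_2^{\,2}$ (a consequence of Stein's lemma, as in Lemma~\ref{l:param}), so the hypothesis $\theta_3(f)=0$ is precisely the statement that $a_2=0$: the expansion of $Y_{ij}$ then consists of the linear term (of weight $\theta_2(f)$) followed directly by terms of Hermite degree $\geqslant3$, whose aggregate behaves like an independent Gaussian noise of variance $\theta_1(f)-\theta_2(f)$, consistently with the linear model \eqref{eq:linear}. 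Inserting this into the trace expansion and replacing $(WX)/\sqrt{n_0}$ by a genuine Gaussian (the correction being governed by $\mathbb{E}W_{11}^3=\mathbb{E}X_{11}^3=0$, the kurtoses, and the tails \eqref{eq:assumwx}) collapses the leading contribution onto the count of \emph{admissible} graphs, i.e.\ onto $\mathfrak{m}_q(f)=\int x^q\,d\mu$ of \eqref{eq:defmoment}; and since $\mu$ is compactly supported with top edge $\mathbf{u_+}$ one has $\mathfrak{m}_q(f)^{1/q}\to\mathbf{u_+}$. The role of $\theta_3(f)=0$ is decisive here and not cosmetic: a nonzero coefficient $a_2$ would introduce graph configurations whose weight grows like $v^q$ for some $v>\mathbf{u_+}$ — this is exactly the mechanism producing the symmetrised outlier found for even polynomial activations in \cite{MontanariFan} — so the condition removes precisely those terms. (In particular $\theta_3=0$ is strictly weaker than ``$f$ odd'': the even part of $f$ may survive, it just has to start at Hermite degree $4$.)

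The hard part is to make all of this uniform while the power $q=q_n$ grows, i.e.\ to upgrade the fixed-$q$ moment computation underlying Theorem~\ref{theo:bepe} into an estimate valid up to $q\sim n_0^{c}$. Concretely one needs: (i) a truncation of the Hermite series of $f$ at a level $k_0=k_0(n_0)$ whose error, once raised to the power $q$, is still negligible — here the analyticity bound \eqref{eq:assum2f} is essential; (ii) quantitative control of the non-Gaussian, Edgeworth-type corrections, each self-intersection of the closed path costing a factor $\O{n_0^{-1}}$ against a controlled combinatorial gain; (iii) a bound on the number of non-admissible graph topologies of length $2q$ carrying Hermite labels $\leqslant k_0$, showing that their total contribution is at most $n_0^{\O{1}}q^{\O{q}}n_0^{-1}\,\mathbf{u_+}^{\,q}$; after which one picks $q_n$ in a window $\log n_0\ll q_n\ll n_0^{c}$ in which every error term disappears upon extracting the $q_n$-th root while $\mathfrak{m}_{q_n}(f)^{1/q_n}$ is already within $o(1)$ of $\mathbf{u_+}$. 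This combinatorial–analytic bookkeeping — simultaneously tracking the Hermite labels, the admissible-graph structure inherited from \cite{BePe}, and the $1/n_0$ corrections, with all constants explicit in $q$ — is where essentially all the difficulty lies; the algebraic facts ($a_0=a_2=0$, Lemma~\ref{l:param}, $\mathfrak{m}_q(f)^{1/q}\to\mathbf{u_+}$) are soft by comparison.
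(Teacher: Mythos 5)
Your skeleton coincides with the paper's: the lower bound comes from the convergence of the e.e.d.\ (Theorem \ref{theo:bepe}), and the upper bound from $\mathds{P}(\lambda_1>\mathbf{u_+}+\delta)\leqslant(\mathbf{u_+}+\delta)^{-2q}\,\E\,\Tr M_k^{2q}$ with $q$ growing just beyond $\log n_1$ (the paper takes $q\sim(\log n_1)^{1+\alpha_1}$, which already sits in your window $\log n_0\ll q_n\ll n_0^{c}$ — asking for moment control all the way up to $q\sim n_0^{c}$ is unnecessary and much harder than what is needed), after truncating $f$ at degree $k\sim\log n_1/\log\log n_1$; your Hermite-basis bookkeeping with $a_0=a_2=0$ is a cosmetic variant of the paper's centered Taylor polynomial $P_k$, and your identification $\theta_3(f)=a_2^2$ is correct. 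The problem is that everything that actually constitutes the proof is deferred to your items (i)--(iii): the uniform-in-$q$ statement $\E\,\Tr M_k^{2q}=n_1\mathfrak{m}_{2q}(P_k)(1+o(1))$ (Proposition \ref{prop:highmoment}, which rests on the quantitative bounds of \cite{BePe} and on killing the tail of the series via the $1/k!$ factors), and — the genuinely new step for this theorem — the analysis of the \emph{even} part of $f$ in Section \ref{sec:5}: after centering, perfect matchings inside a niche are forbidden, the leading alternative in a long cycle is the bridge matching whose weight per cycle is $\theta_3(P_k)^{q_1}\kappa^{q_1}$ and therefore vanishes when $\theta_3(f)=0$, and the next-order matchings are suppressed by $n_1^{-q}$. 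You invoke this mechanism ("$a_2\neq0$ would create weights $v^q$ with $v>\mathbf{u_+}$") but give no argument for why, once $a_2=0$, no other even-degree configuration can produce a competing $q$-th-power weight at the scale $q\sim(\log n_1)^{1+\alpha}$; that is precisely what has to be proved, not assumed from the Gaussian-equivalence heuristic.

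There is also a quantitative flaw in the one place where you do state a target bound. In item (iii) you propose to show that non-admissible topologies contribute at most $n_0^{\mathcal{O}(1)}q^{\mathcal{O}(q)}n_0^{-1}\mathbf{u_+}^{q}$ and that this "disappears upon extracting the $q_n$-th root." It does not: $(q^{\mathcal{O}(q)})^{1/q}=q^{\mathcal{O}(1)}\to\infty$, so such a bound is useless for any $q_n\to\infty$, and with $q\sim(\log n_1)^{1+\alpha}$ the factor $q^{\mathcal{O}(q)}$ is even super-polynomial in $n_0$, so the prefactor $n_0^{-1}$ does not rescue it. What is actually needed (and what \cite{BePe} and the paper prove) is that each additional identification turning an admissible graph into a non-admissible one costs a factor $n_0^{-1}$ against only a $q^{\mathcal{O}(1)}$ combinatorial gain, so that the total non-admissible contribution is $o(1)$ \emph{times the admissible one}; the same "per-extra-cycle cost $q^{\mathcal{O}(1)}/n_0$" structure is what controls graphs with several long cycles. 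Without these estimates — and without the even-monomial argument above — the proposal is a correct plan matching the paper's route, but not a proof.
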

\begin{remark}This first result has to be compared with \cite{MontanariFan} where they show that there is no outlier for odd functions for kernel matrices. Our assumptions on the activation function $f$ are much stronger but our result holds true for a wider class of distributions than $W$ and $X$ Gaussian. We also have the weaker condition that $\theta_3(f)=0$ instead of $f$ being odd. This universal result is also of interest as it should be compared with Theorems \ref{theo:sep_easy} and \ref{theo:edge2} where we show that the behavior of the largest is distribution dependent. We expect however that one can show Theorem \ref{theo:edge} under the assumptions of \cite{MontanariFan} for Gaussian matrices $W$ and $X$ using stronger concentration bounds.
\end{remark} 

We now state the result for the largest eigenvalue in the general case. 
While we cannot describe the actual position and phase transition in the general case of parameters, we are able to relate it to a linear model for which the presence of outliers could be analyzed.

\begin{theorem}\label{theo:edge2}The largest eigenvalue $\lambda_1$ of $M$ has the same asymptotic behavior as that of the linear random matrix
	\[
	M_{\mathrm{Lin}}=\frac{1}{m}\left(\sqrt{\theta_1-\theta_2}\tilde{Z} +\sqrt{\frac{\theta_2}{n_0}} {\tilde{W}\tilde{X}}+J_2\right)
	\left(\sqrt{\theta_1-\theta_2}\tilde{Z} +\sqrt{\frac{\theta_2}{n_0}} {\tilde{W}\tilde{X}}+J_2\right)^\top
	\]
	Hereabove, $\tilde{Z}\in\R^{n_1\times m}, \tilde{W}\in\R^{n_1\times n_0}, $ and $\tilde{X}\in\R^{n_0\times m}$ are independent random matrices with i.i.d. $\mathcal{N}(0,1)$ entries
	and the matrix $J_2=J_2(f,W,X,n_0)\in\R^{n_1\times m}$ is a rank 2 block matrix with blocks of size $n_1/2\times m/2$: 
	\[
%
	J_2 = \sqrt{\frac{4\theta_3(f)}{n_0}}
		\begin{pmatrix}
			\sqrt{\kappa_w}J & 0\\
			0 & \sqrt{\kappa_x}J
		\end{pmatrix}
	. \]
	where $J\in\R^{n_1/2\times m/2}$ is the matrix whose all entries are equal to 1.
\end{theorem}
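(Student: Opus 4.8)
The plan is to Hermite-expand the activation $f$ about the Gaussian proxy of the entries of $WX/\sqrt{n_0}$, isolate the finitely many components of $Y$ that can reach the edge of the spectrum, and match each of them with the corresponding block of $M_{\mathrm{Lin}}$ via a high-trace comparison in the spirit of \cite{BePe} and of the proof of Theorem~\ref{theo:edge}. Set $s=\sigma_w\sigma_x$ and $z_{ij}=(WX)_{ij}/(s\sqrt{n_0})$; under \eqref{eq:assumwx} and the vanishing third moments, the $z_{ij}$ are close to $\mathcal{N}(0,1)$ with an Edgeworth correction of size $n_0^{-1}$, the $n_0^{-1/2}$ term being suppressed by $\E X_{11}^3=\E W_{11}^3=0$. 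Writing $f(sz)=\sum_{k\geqslant1}a_kH_k(z)$ with $a_k=\frac1{k!}\E[f(s\mathcal{N}(0,1))H_k(\mathcal{N}(0,1))]$, one has $a_0=0$ by the centering assumption and, by Stein's lemma, $a_1^2=\theta_2$, $a_2^2=\theta_3$ and $\sum_{k\geqslant1}k!\,a_k^2=\theta_1$; truncating the sum at a level growing slowly with $n_0$, the bound \eqref{eq:assum2f} together with the tails \eqref{eq:assumwx} controls the operator norm of the remainder.

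The next step is to decompose $Y$ along this expansion. The degree-one term is $a_1H_1(z_{ij})=\sqrt{\theta_2/n_0}\,(WX)_{ij}/s$, which corresponds to the information term $\sqrt{\theta_2/n_0}\,\tilde W\tilde X$ of $M_{\mathrm{Lin}}$ once $(W,X)$ is exchanged for Gaussians in the universality comparison below. The degree-two term is the genuinely new feature: expanding $(WX)_{ij}^2=\sum_kW_{ik}^2X_{kj}^2+\sum_{k\neq l}W_{ik}W_{il}X_{kj}X_{lj}$ and splitting the diagonal sum into its mean $n_0s^2$, a row-only piece $\propto\sum_k(W_{ik}^2-\sigma_w^2)$, a column-only piece $\propto\sum_k(X_{kj}^2-\sigma_x^2)$, and a cross-fluctuation, one obtains $a_2H_2(z_{ij})=R_W+R_X+N_2$, where $R_W=u\mathbf1_m^\top$ is rank one with $u_i=\frac{a_2\sigma_x^2}{s^2n_0}\sum_k(W_{ik}^2-\sigma_w^2)$, $R_X=\mathbf1_{n_1}v^\top$ is rank one with $v_j=\frac{a_2\sigma_w^2}{s^2n_0}\sum_k(X_{kj}^2-\sigma_x^2)$, and $N_2$ (the cross-fluctuation plus the off-diagonal chaos) is full rank and mean zero. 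A variance computation gives $\E[u_i^2]\sim\theta_3(\kurt(W)-1)/n_0$ and $\E[v_j^2]\sim\theta_3(\kurt(X)-1)/n_0$, hence $\frac1m(u\mathbf1_m^\top)(u\mathbf1_m^\top)^\top=uu^\top$ and $\frac1m(\mathbf1_{n_1}v^\top)(\mathbf1_{n_1}v^\top)^\top=\frac{\norm{v}^2}{m}\mathbf1_{n_1}\mathbf1_{n_1}^\top$ are rank-one positive semidefinite matrices whose top eigenvalues converge to $\theta_3(\kurt(W)-1)/\psi$ and $\theta_3(\kurt(X)-1)/\psi$ — which are precisely the two nonzero eigenvalues of $\frac1m J_2J_2^\top$, so that $\kappa_w=\kurt(W)-1$ and $\kappa_x=\kurt(X)-1$.

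One then checks that $R_W$ and $R_X$ are the only surviving low-rank pieces and that they may be replaced by $J_2$: no Hermite term of degree $1$ or $\geqslant3$ contributes a surviving low-rank piece — degree one produces none, the candidate degree-three diagonal piece $\propto\sum_kW_{ik}^3X_{kj}^3$ has vanishing mean by the third-moment assumption, and the degree-$\geqslant4$ diagonal pieces carry eigenvalues $o(1)$; all cross-terms involving $R_W$ or $R_X$ are $o(1)$ in operator norm, e.g.\ $\frac1m(u\mathbf1_m^\top)(\mathbf1_{n_1}v^\top)^\top=\frac{\scp{v}{\mathbf1_m}}{m}u\mathbf1_{n_1}^\top$ with $\scp{v}{\mathbf1_m}=O(\sqrt m\,n_0^{-1/2})$, and the cross-terms with the information and noise parts are treated similarly using centering and the vanishing third moments; and the left singular directions $u/\norm{u}$ and $\mathbf1_{n_1}/\sqrt{n_1}$ of $R_W$ and $R_X$ are asymptotically orthogonal and asymptotically incoherent with the bulk eigenvectors. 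Therefore $R_W+R_X$ acts as a genuine rank-two perturbation with eigenvalues $\theta_3\kappa_w/\psi$ and $\theta_3\kappa_x/\psi$, and may be replaced, without changing $\lambda_1$, by the deterministic block all-ones matrix $J_2$ — placing the two pieces on disjoint coordinate blocks realizes the orthogonality with deterministic directions, a finite-rank perturbation does not move the bulk edge, and the outlier location depends on the perturbation only through its eigenvalues and its incoherence with the bulk.

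The remaining point, and the main obstacle, is to show that $\frac1m\widehat Y\widehat Y^\top$, with $\widehat Y=Y-R_W-R_X$ collecting the degree-one term, the quadratic noise $N_2$ and the degree-$\geqslant3$ terms, has the same edge $\mathbf u_+$ as the Gaussian linear model of \eqref{eq:linear}. I would attack this as in the proof of Theorem~\ref{theo:edge}: evaluate $\E\Tr[(\widehat Y\widehat Y^\top)^q]$ for $q\to\infty$ slowly through the graph expansion of \cite{BePe}, show that the contributions of $N_2$ and of the degree-$\geqslant3$ terms reproduce at the edge the moments of the variance-$(\theta_1-\theta_2)$ i.i.d.\ Gaussian matrix — the odd graphs that would break universality vanishing thanks to the third-moment condition — and convert these trace bounds into a two-sided control of $\lambda_1$; combined with the stability of the top eigenvalue of an information-plus-noise matrix under finite-rank perturbation, this gives the theorem. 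The difficulty here is that $N_2$ is a genuine second-order chaos in the entries of $W$ and $X$ rather than a sum of independent variables, so one must push the graph-counting of \cite{BePe} from fixed moments up to moments of order $q\to\infty$ with errors controlled uniformly in the Hermite truncation level via \eqref{eq:assum2f} and \eqref{eq:assumwx}; a secondary difficulty is establishing the asymptotic incoherence of the data-dependent directions $u$ and $v$ with the bulk eigenvectors that licenses the replacement of $R_W+R_X$ by $J_2$.
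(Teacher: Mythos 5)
Your Hermite-chaos decomposition is a genuinely different route from the paper and it does identify the right structure: $a_2^2=\theta_3$, the rank-one pieces built from $\sum_k(W_{ik}^2-\sigma_w^2)$ and $\sum_k(X_{kj}^2-\sigma_x^2)$, and their limiting eigenvalues $\theta_3\kappa_w/\psi$ and $\theta_3\kappa_x/\psi$, which are exactly the two nonzero eigenvalues of $\frac1m J_2J_2^\top$. The paper never performs this operator-level splitting; it proves the theorem entirely inside the moment method, showing $\mathbb{E}\Tr M^q=(1+o(1))\,\mathbb{E}\Tr M_{\mathrm{Lin}}^q$ for $q\sim(\log n_1)^{1+\alpha}$ by extending the graph calculus of \cite{BePe}: bridged perfect matchings in niches of even monomials give a single long cycle weighted $(\theta_3\kappa)^{L}n_0^{-1}$ (the $J_2$ edges of the linear model), odd monomials give long blue cycles weighted $\theta_2$, mixed odd/even long cycles are killed by the vanishing third moments, and graphs with several $\theta_3$-weighted long cycles or non-admissible graphs are shown negligible by an explicit surgery argument.

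However, as a proof your proposal has two genuine gaps, both of which you name but neither of which you close. First, the edge control of the remainder $\widehat Y$ (degree-one term, the second-order chaos $N_2$, and degrees $\geqslant3$) at the scale of moments of order $(\log n_1)^{1+\alpha}$ is not a black box you can import from Theorem \ref{theo:edge}: that theorem is proved under $\theta_3(f)=0$, and extending the counting to a correlated chaos with $\theta_3\neq0$, while keeping the errors uniform in the truncation level, is precisely the content of Sections 4--5 of the paper; deferring it means the main step of the theorem is missing. Second, the replacement of $R_W+R_X$ by the deterministic $J_2$ cannot be licensed by citing finite-rank perturbation results for information-plus-noise models, because those results (e.g.\ \cite{benaychsingular}, \cite{Capitaine}) assume the low-rank signal is independent of the noise, whereas your $u$ and $v$ are functions of the same $W$ and $X$ that build the degree-one term and $N_2$. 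The statement that the outlier location depends ``only on the eigenvalues and incoherence'' of the perturbation is not a theorem in this correlated setting; one must show that the cross-correlations do not shift the outlier, which in the paper is exactly what the vanishing of mixed-monomial matchings (through $\mathbb{E}W_{11}^3=\mathbb{E}X_{11}^3=0$) accomplishes at the level of traces. Until these two points are argued quantitatively, the proposal is a plausible and illuminating reduction, not a proof.
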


We first note that the theorem gives the asymptotic position of the largest eigenvalue but we do not believe that the corresponding eigenvector is asymptotically given by the extreme eigenvector of $M_{\mathrm{Lin}}.$ While it would be of interest to understand the eigenspace structure and the possible impact of the distribution of $W$ and$X$ or the activation function $f$, this is beyond the scope of this paper. This result also gives a candidate for the position of the second largest eigenvalue since $J_2$ is a rank $2$ matrix and could create two outliers in the information-plus-noise matrix $M_{\mathrm{Lin}}$. In the proof we show that 
\[
\mathbb{E}\text{Tr}M^q=(1+o(1))\mathbb{E}\text{Tr}M_{\mathrm{Lin}}^q
\]
 which would suggest that the eigenvalues exiting the support of the limiting e.e.d. are the same as those for  the modified matrix $M_{\mathrm{Lin}}$.

\begin{remark}In the regime considered $q\sim (\ln n_0)^{1+\alpha}$ for some $\alpha>0$ small, one can show that 
	\begin{eqnarray}\label{2.7}&\mathbb{E}\left (\text{Tr} M^q\right) = &\cr
		&&(1+o(1))\sum_{I_i,I_j=0}^q\sum_{b=0}^{I_i+I_j}\sum_{L=4}^{q-b}{\mathcal{A}}(q,I_i,I_j,b; L)\theta_1(f)^b\theta_2(f)^{q-b-L}\psi^{I_i+1-q}\phi^{I_j}\theta_3(f)^L \cr
		&&\times \left ( (\kurt(W)-1)^L+ (\kurt(X)-1)^L\right )\cr
		&&+(1+o(1))\sum_{I_i,I_j=0}^q\sum_{b=0}^{I_i+I_j+1}{\mathcal{A}}(q,I_i,I_j,b)\theta_1(f)^b\theta_2(f)^{q-b}\psi^{I_i+1-q}\phi^{I_j}.\end{eqnarray}
	Hereabove  $\mathcal{A}(q,I_i,I_j,b; L)$ counts the number of admissible graphs with $b$ simple cycles and one marked long cycle of length $L$ which is defined later.
	The first sum is as the moment of the limiting e.e.d. except that one long cycle of length $L$ is assigned the weight $\theta_3(f)^L$ times some factor depending on $W$ or $X$.
	The second sum can be shown to be equal to $\int x^q d\mu =O(1) u_+^q$. 
\end{remark}

In the particular case where $\theta_2=0$, we obtain an easier linear model since the product $\tilde{W}\tilde{X}$ disappear from the expression. In particular, we prove an equivalent theorem which consists in a rank 1 perturbation of an i.i.d matrix. This formulation allows us to give an explicit formula for the asymptotic position of the largest eigenvalue since finite rank perturbation of the Marchenko--Pastur are well understood.
\begin{theorem}\label{theo:sep_easy}
If $f$ is such that $\theta_2(f)=0$, the largest eigenvalue of $M$ has the same limit as the largest eigenvalue of the following information-plus-noise matrix 
\begin{equation}\label{capmodel}
	\widetilde{M}_{\mathrm{Lin}}
		=\frac{1}{m}\left (\sqrt{\theta_1} \tilde{Z} +\sqrt{\frac{\theta_3\kappa}{n_0}} J\right)\left ( \sqrt{\theta_1} \tilde{Z} +\sqrt{\frac{\theta_3\kappa}{n_0}} J\right)^\top,\end{equation}
where $\kappa=\max (\kurt(W)-1, \kurt(X)-1 )$, $\tilde{Z}$ is a $n_1\times m$ random matrix with i.i.d. $\mathcal{N}(0,1)$ entries and $J$ is the $n_1\times m$ matrix whose all entries are equal to $1$. In particular, we have that 
\[
\lambda_1 \xrightarrow[n_1\to \infty]{}
\left\{
	\begin{array}{ll}
		\displaystyle{\theta_1 \frac{(1+\alpha)(\gamma + \alpha)}{\alpha}} & \displaystyle{\text{if } \alpha>\sqrt{\gamma}\theta_1},\\[1.75ex]
		\left(1+\sqrt{\gamma}\right)^2 &\text{otherwise}.
	\end{array}
\right.
\quad\text{ with }\alpha\coloneqq\frac{\theta_3\kappa}{\psi}
\]
where we recall that $\gamma=\frac{\phi}{\psi}$.
\end{theorem}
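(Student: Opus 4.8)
\emph{Strategy.} Since $\theta_2(f)=0$ we may rescale $f$ by $\sqrt{\theta_1(f)}$ — which affects neither the centering nor the growth bound \eqref{eq:assum2f} — and assume throughout that $\theta_1(f)=1$, recovering the general statement by multiplying the final limit by $\theta_1$. Under $\theta_2=0$ the expansion \eqref{2.7} of $\mathbb{E}\,\Tr M^q$ collapses: in the first sum only the indices with $q-b-L=0$ survive, while the second sum reduces to the $q$-th moment of the Marchenko--Pastur law of shape $\gamma$ and variance $1$, whose support has upper edge $(1+\sqrt{\gamma})^{2}$. One then recognises the result as $(1+o(1))\,\mathbb{E}\,\Tr M_{\mathrm{Lin}}^q$, where $M_{\mathrm{Lin}}$ is the matrix of Theorem~\ref{theo:edge2} specialised to $\theta_2=0$, i.e.\ $\frac1m(\widetilde Z+J_2)(\widetilde Z+J_2)^{\top}$ with $J_2=\sqrt{4\theta_3/n_0}\,\mathrm{diag}(\sqrt{\kappa_w}J,\sqrt{\kappa_x}J)$ a rank-$2$ block matrix. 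Thus the proof has two parts: first, to establish
\[
\mathbb{E}\,\Tr M^q=(1+o(1))\,\mathbb{E}\,\Tr M_{\mathrm{Lin}}^q\qquad\text{for }q\sim(\ln n_0)^{1+\eta},\ \eta>0\text{ small;}
\]
second, to locate $\lambda_1(M_{\mathrm{Lin}})$ and to check it coincides with $\lambda_1(\widetilde{M}_{\mathrm{Lin}})$ for $\widetilde{M}_{\mathrm{Lin}}$ as in \eqref{capmodel}.

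\emph{From the $q$-th moment to the top eigenvalue.} The tail bounds \eqref{eq:assumwx} together with \eqref{eq:assum2f} yield, via a truncation argument, an a priori bound $\lambda_1(M)\leqslant(\ln n_0)^{c}$ with overwhelming probability (and likewise for $M_{\mathrm{Lin}}$). Since $\lambda_1^{q}\leqslant\Tr M^{q}\leqslant n_1\lambda_1^{q}$ and $q=q(n_0)\to\infty$ so that $n_1^{1/q}\to1$, and since by the first part $(\mathbb{E}\,\Tr M^{q})^{1/q}$ and $(\mathbb{E}\,\Tr M_{\mathrm{Lin}}^{q})^{1/q}$ have the same limit $\rho_\infty$, a standard argument — Markov's inequality for the upper bound, the moment lower bound combined with the a priori bound to discard rare large deviations for the lower bound — shows that $\lambda_1(M)$ and $\lambda_1(M_{\mathrm{Lin}})$ both converge in probability to $\rho_\infty$. (If $M_{\mathrm{Lin}}$ has no eigenvalue above $(1+\sqrt{\gamma})^{2}$, this common limit is that edge.)

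\emph{The phase transition of $M_{\mathrm{Lin}}$.} Because $\widetilde Z$ is Gaussian, hence invariant in law under left and right orthogonal rotations, $M_{\mathrm{Lin}}$ has the same eigenvalue distribution as $\frac1m(\widetilde Z+J_2')(\widetilde Z+J_2')^{\top}$ for any $J_2'$ with the same two singular values as $J_2$; choosing $J_2'$ supported on two disjoint coordinates turns $M_{\mathrm{Lin}}$ into a finite-rank additive (information-plus-noise) perturbation of a white Wishart matrix of shape $\gamma$. The two non-zero singular values of $m^{-1/2}J_2$ converge to $(\theta_3\kappa_w/\psi)^{1/2}$ and $(\theta_3\kappa_x/\psi)^{1/2}$, and the corresponding rank-one signals occupy orthogonal row and column sectors, so they decouple. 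The Baik--Ben Arous--P\'ech\'e phase transition for such deformations, applied sector by sector, states that the associated eigenvalue sticks to the edge $(1+\sqrt{\gamma})^{2}$ while the spike stays below a threshold and otherwise detaches to the point $\rho$ solving the corresponding secular equation; expressing that equation through the Stieltjes transform of the Marchenko--Pastur law and using its self-consistency relation together with the companion identity gives, with $\alpha_\bullet:=\theta_3\kappa_\bullet/\psi$,
\[
\rho(\alpha_\bullet)=\frac{(1+\alpha_\bullet)(\gamma+\alpha_\bullet)}{\alpha_\bullet}\ \ \text{if }\alpha_\bullet>\sqrt{\gamma},\qquad\rho(\alpha_\bullet)=(1+\sqrt{\gamma})^{2}\ \ \text{otherwise,}
\]
an increasing function of $\alpha_\bullet$ above the threshold. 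Hence $\lambda_1(M_{\mathrm{Lin}})\to\rho(\max(\alpha_w,\alpha_x))=\rho(\theta_3\kappa/\psi)$ with $\kappa=\max(\kurt(W)-1,\kurt(X)-1)$; the same value is obtained for $\widetilde{M}_{\mathrm{Lin}}$, whose unique non-zero spike equals $\theta_3\kappa/\psi$. Combined with the previous paragraph and the initial rescaling, this is the assertion of the theorem.

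\emph{Main obstacle.} The delicate point is the moment identity at the diverging order $q\sim(\ln n_0)^{1+\eta}$: one must check that admissible graphs carrying two or more long cycles, or long cycles entangled with the simple $2$-cycles that build the bulk, contribute only a factor $1+o(1)$; that the Taylor (Gaussian) approximation of the entries $f((WX)_{ij}/\sqrt{n_0})$, controlled through \eqref{eq:assum2f}, survives being iterated over the $q$ factors of $\Tr M^q$; and that the contributions left over by the sub-Gaussian-type tails \eqref{eq:assumwx} are negligible after truncation, which is precisely what forces $q$ to be at most a power of $\ln n_0$. All of this is an adaptation of the combinatorial estimates underlying Theorem~\ref{theo:edge2}; the genuinely new ingredients here — the collapse of \eqref{2.7} at $\theta_2=0$, the reduction of the rank-$2$ deformation $J_2$ to the rank-one model \eqref{capmodel}, and the explicit Stieltjes-transform evaluation of $\rho$ — are comparatively light.
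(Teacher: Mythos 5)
Your skeleton matches the paper's strategy: polynomial approximation, trace moments of order $q\sim(\log n_1)^{1+\eta}$, a graph-theoretic comparison of $\E \Tr M^q$ with the trace moments of an information-plus-noise ensemble, and then the explicit outlier location for that ensemble (the paper quotes \cite{Capitaine} where you rederive the formula from the BBP secular equation; the paper also matches moments directly with the rank-one matrix $\widetilde M_{\mathrm{Lin}}$ built from $\kappa=\max(\kappa_w,\kappa_x)$, whereas you pass through the rank-two $M_{\mathrm{Lin}}$ and reduce to rank one by Gaussian rotation invariance --- both reductions are harmless). The genuine gap is that the step you label the ``main obstacle'' is not something you may import: the moment identity $\E\Tr M^q=(1+o(1))\,\E\Tr M_{\mathrm{Lin}}^q$ at diverging $q$, equivalently the expansion \eqref{2.7}, is exactly what the proof of this theorem (Proposition \ref{prop:even} and its extension in Section \ref{sec:5}) establishes; in the paper, Theorem \ref{theo:edge2} is proved \emph{from} the Section \ref{sec:4} analysis, not the other way around, so deferring to ``the combinatorial estimates underlying Theorem \ref{theo:edge2}'' is circular as a proof plan. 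Concretely, two ideas are missing and neither is routine: (i) the identification of the weight carried by a long red cycle when the (centered) even part of $f$ acts --- since $\E f(\mathcal N)=0$ forbids a perfect matching of the $\ell$-indices inside any niche, the leading matchings are perfect matchings inside niches joined by \emph{bridges} between adjacent niches, each bridge producing a fourth-moment factor $\kurt(W)-1$ or $\kurt(X)-1$, so a long cycle of length $2q_1$ contributes $(\theta_3\kappa/\psi)^{q_1}$; this is precisely where the all-ones matrix $J$ and the nonuniversal constant $\kappa$ come from, and nothing in your write-up produces it; (ii) the proof that admissible graphs with two or more long cycles are negligible, which the paper obtains by an explicit surgery (splitting a long cycle at two chosen vertices and regluing the attached trees), an injection costing only $Cq^{O(1)}$ choices against a factor $n_0^{-1}$ per removed long cycle; you state this ``must be checked'' but give no mechanism, and without it the comparison with the rank-one model has no content.

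A secondary but real gap is the passage from moments to the eigenvalue. Markov's inequality converts $(\E\Tr M^q)^{1/q}\to\rho_\infty$ into the upper bound, but when $\rho_\infty$ lies strictly above the bulk edge the e.e.d.\ gives no matching lower bound, and your first-moment argument combined with an a priori bound $\lambda_1\leqslant(\log n_0)^C$ does not exclude that the expectation is carried by an event of vanishing probability, since $(\log n_0)^{Cq}\gg\rho_\infty^{\,q}$ for $q\sim(\log n_0)^{1+\eta}$; one needs concentration of $\Tr M^q$ (a second-moment estimate) or some other device to force $\lambda_1\geqslant\rho_\infty-\delta$ with probability tending to one. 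The paper is itself terse on this point, but your sketch as written would not close it.
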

 The second part of the theorem which describes the separation of outliers has been proved  in \cite{Capitaine} for the precise model (\ref{capmodel}). The first observation is that the presence of outliers and its position is nonuniversal as it depends on the distribution of $W$ and $X$ through the parameter $\kappa$. This is illustrated in Figure \ref{fig:nonuniv}.

\begin{figure}[ht!]
	\centering
	\begin{minipage}{.30 \linewidth}
		\includegraphics[width=\linewidth]{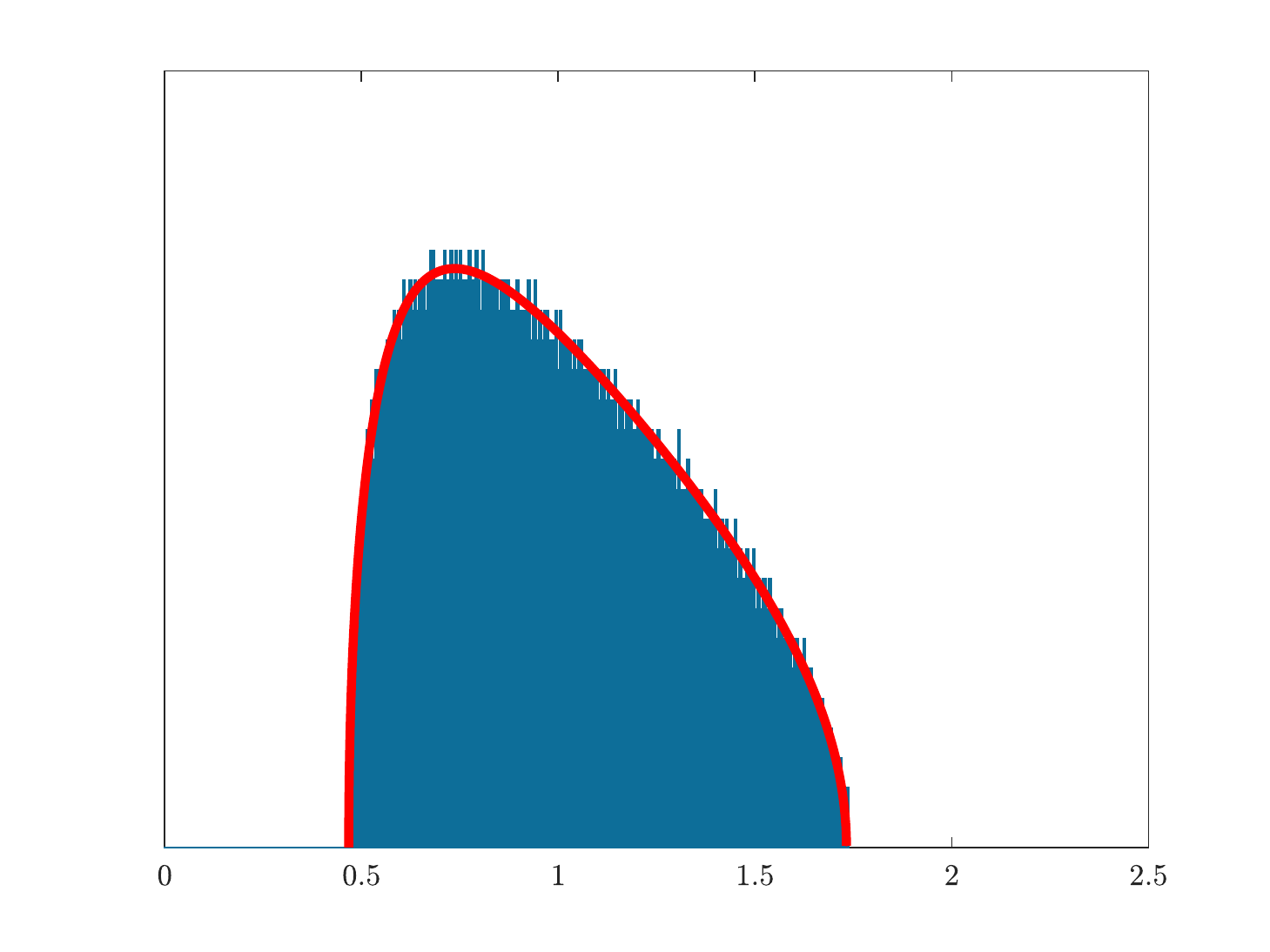}
	\end{minipage}
	\begin{minipage}{.30 \linewidth}
		\includegraphics[width=\linewidth]{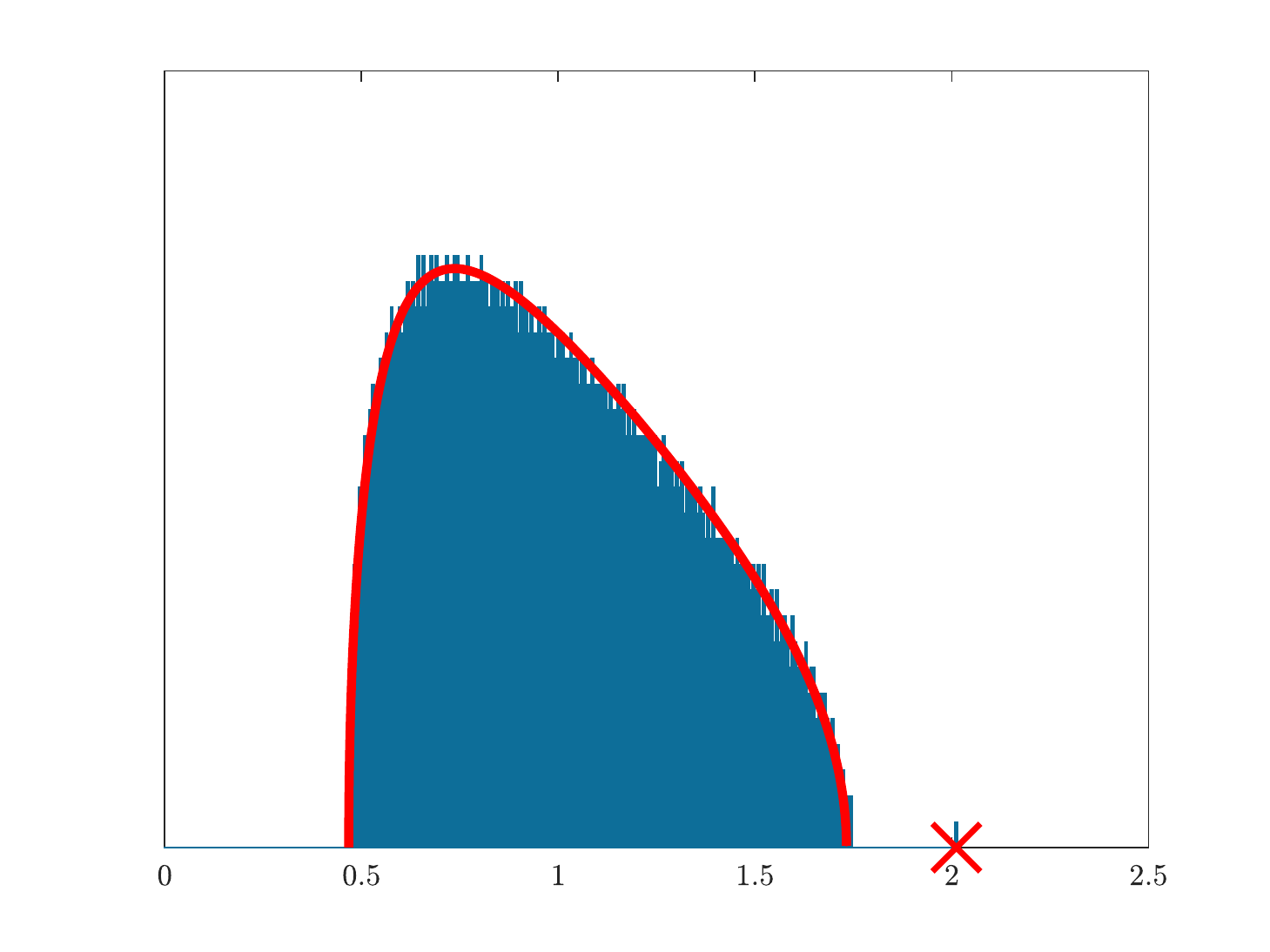}
	\end{minipage}
	\begin{minipage}{.30 \linewidth}
		\includegraphics[width=\linewidth]{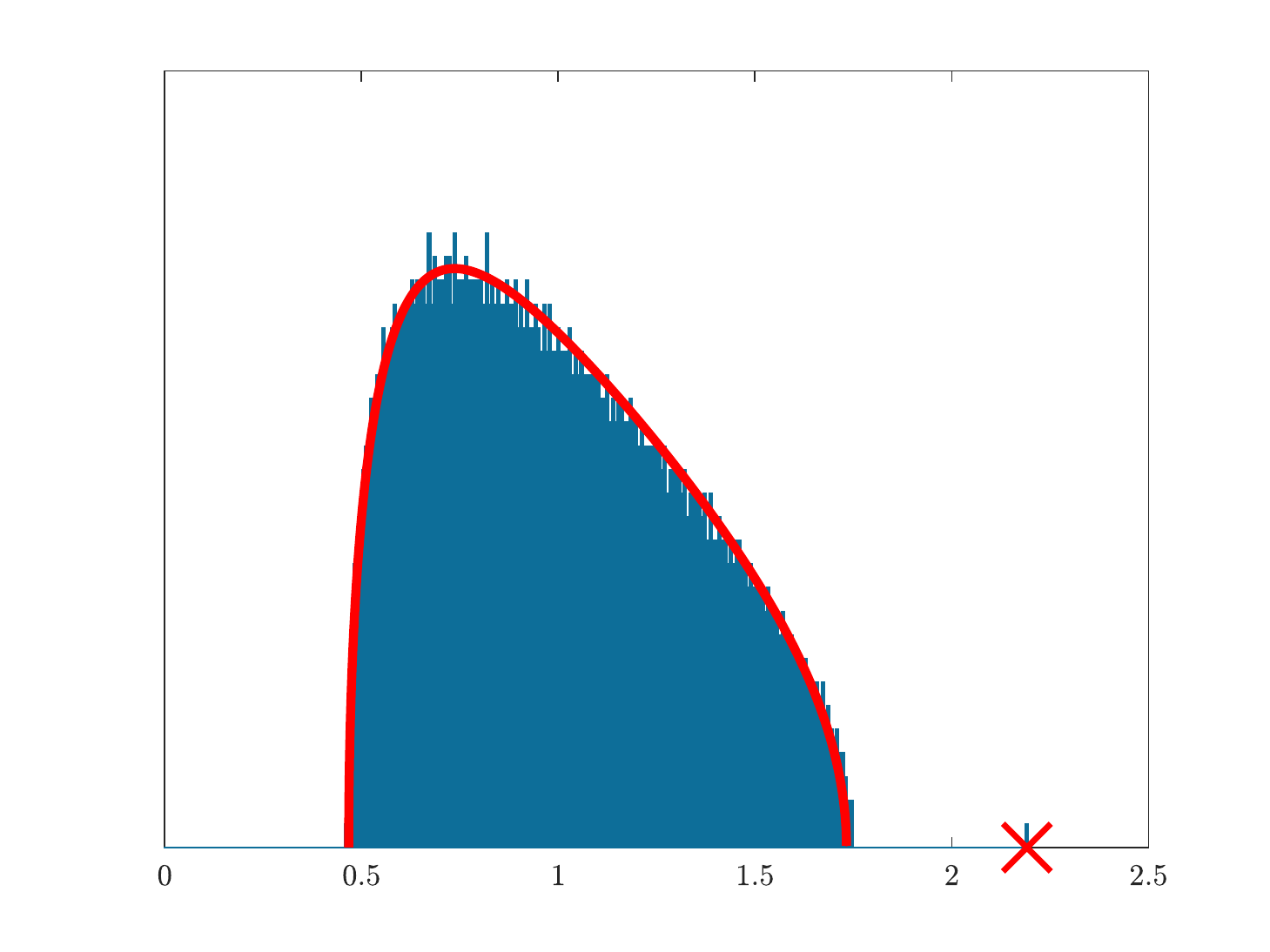}
	\end{minipage}
	\caption{Here $X_{ij}\sim \mathrm{Ber}$ with $\mathrm{Ber}$ being $\pm 1$ with probability 1/2, $f(x)=(x^2-1)/\sqrt{2}$, $\phi=.1$ and $\psi=1$. We then change the distribution of $W$. In the left picture, we take $W_{ij}\sim \mathrm{Ber}$ : there is no outlier as $\kappa=0$; in the center, we choose $W_{i,j}\sim \frac{1}{4}\delta_{\mathrm{Ber}}+\frac{3}{4}\delta_{\mathcal{N}(0,1)}$ with an outlier appearing as $\kappa>0$; finally, on the right, we consider $W_{ij}\sim \mathcal{N}(0,1)$ and the outlier is further away from the bulk of the spectrum as $\kappa$ increases.}
	\label{fig:nonuniv}
\end{figure}

Interestingly, we also see that the behavior of the largest eigenvalue depends on the activation function in a different way than the e.e.d as it depends also on $\theta_3$. This is illustrated in Figure \ref{fig:theta_3}.

\begin{figure}[ht!]
	\centering
	\begin{minipage}{.30 \linewidth}
		\includegraphics[width=\linewidth]{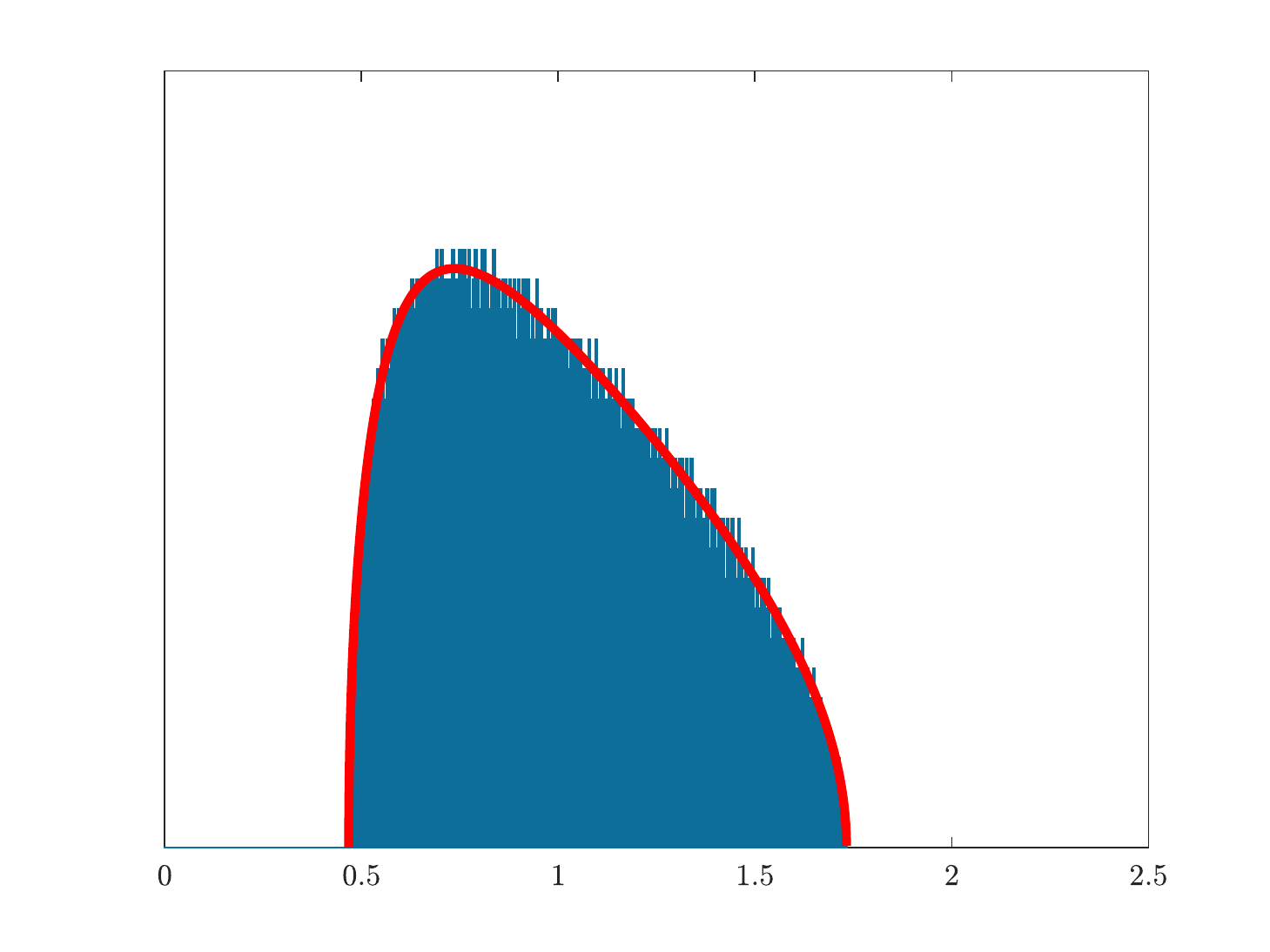}
	\end{minipage}
	\begin{minipage}{.30 \linewidth}
		\includegraphics[width=\linewidth]{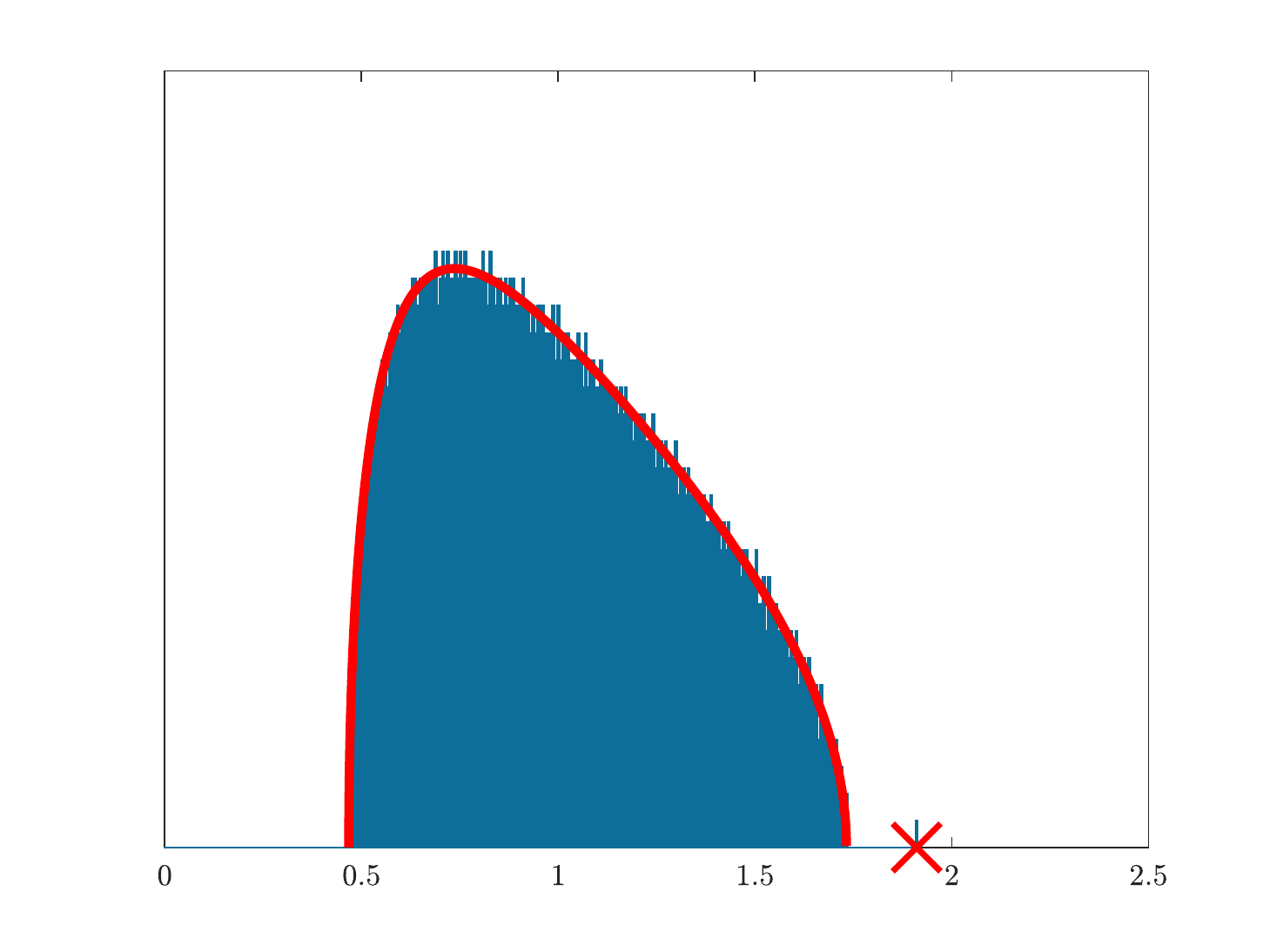}
	\end{minipage}
	\begin{minipage}{.30 \linewidth}
		\includegraphics[width=\linewidth]{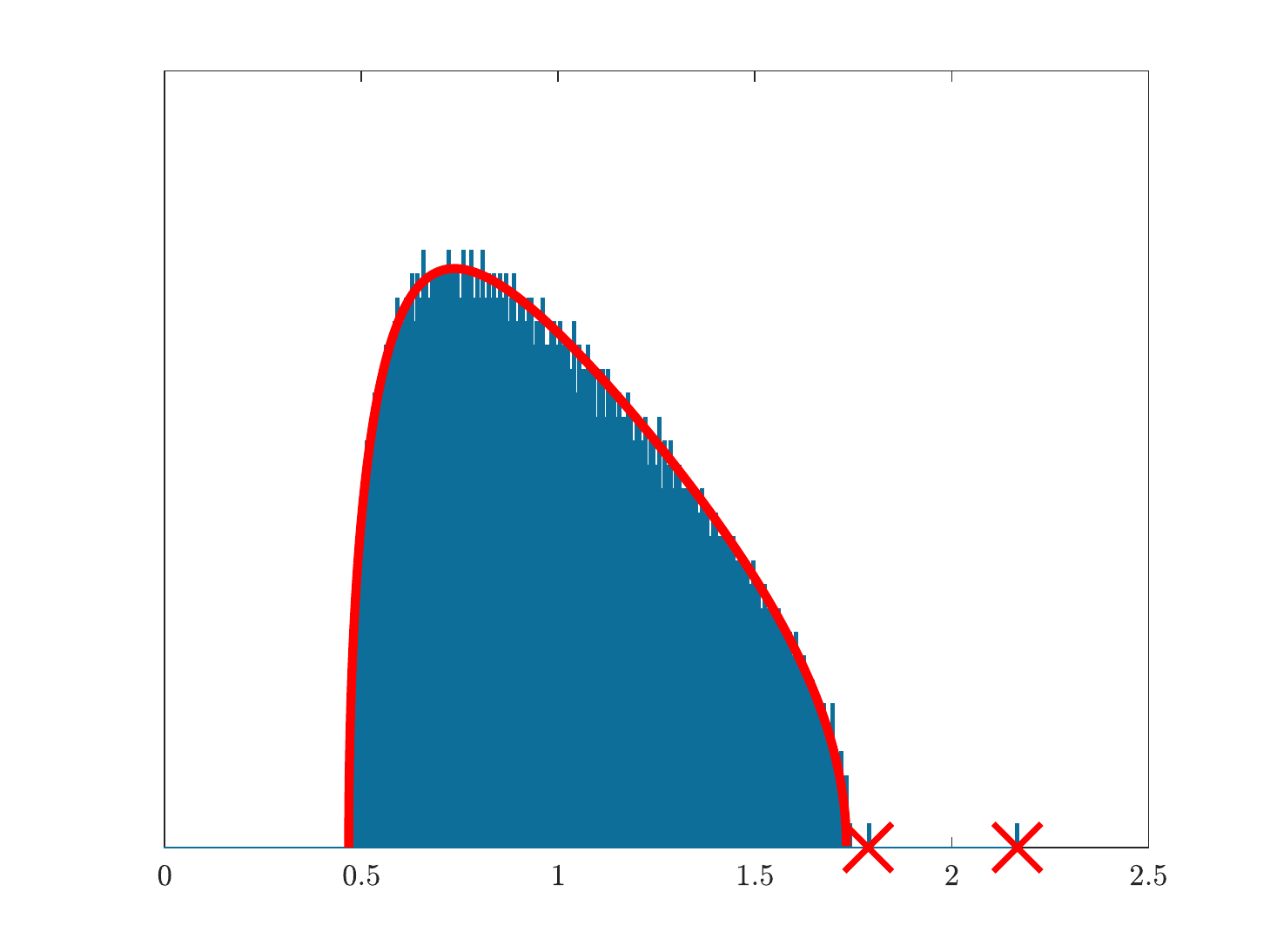}
	\end{minipage}
	\caption{Here $W$ has standard Gaussian entries, the entries of $X$ are distributed as $\frac{1}{2}\delta_{\mathrm{Ber}}+\frac{1}{2}\delta_{\mathcal{N}(0,1)}$, $\phi=.1$ and $\psi=1$ where $\mathrm{Ber}$ is $\pm 1$ with probability $1/2$. We then consider the one parameter family of activation functions $f_{\alpha}(x)=\frac{\cos(\alpha x)-\mathrm{e}^{-\alpha^2/2}}{\sqrt{\mathrm{e}^{-\alpha^2}(\cosh(\alpha^2)-1)}}$ for which we have $\theta_1(f_\alpha)=1$ and $\theta_2(f_\alpha)=0$ regardless of $\alpha$ while $\theta_3(f_\alpha)$ varies depending on $\alpha$. We see that as $\alpha$ decreases ($\alpha=2$ (left), $\alpha=1.5$ (center), $\alpha=.8$ (right)) and $\theta_3(f)$ increases  some outliers are appearing while the overal e.e.d. remains the Marchenko--Pastur distribution (in red).}
	\label{fig:theta_3}
\end{figure}

Finally, we see that the position of the largest eigenvalue depends also on the \emph{architecture} of the neural network (in terms of the different dimensions since we consider only a single layered network here) as it depends on the parameters $\psi$ (or $\phi$ depending on the normalization the matrix $J$). In particular, if we consider the case $\theta_2=0$, we see that the dimension $n_0$ does not appear in the asymptotic eigenvalue distribution since it is given by the Marchenko--Pastur distribution of shape $\gamma=\frac{\phi}{\psi}=\lim \frac{n_1}{m}$. However, every dimension involved in the layer appears when considering both the empirical eigenvalue distribution and the largest eigenvalue.  Thus the choice of dimension size is of importance for the largest eigenvalue as illustrated in Figure \ref{fig:archi}.

\begin{figure}[ht!]
	\centering
	\begin{minipage}{.30 \linewidth}
		\includegraphics[width=\linewidth]{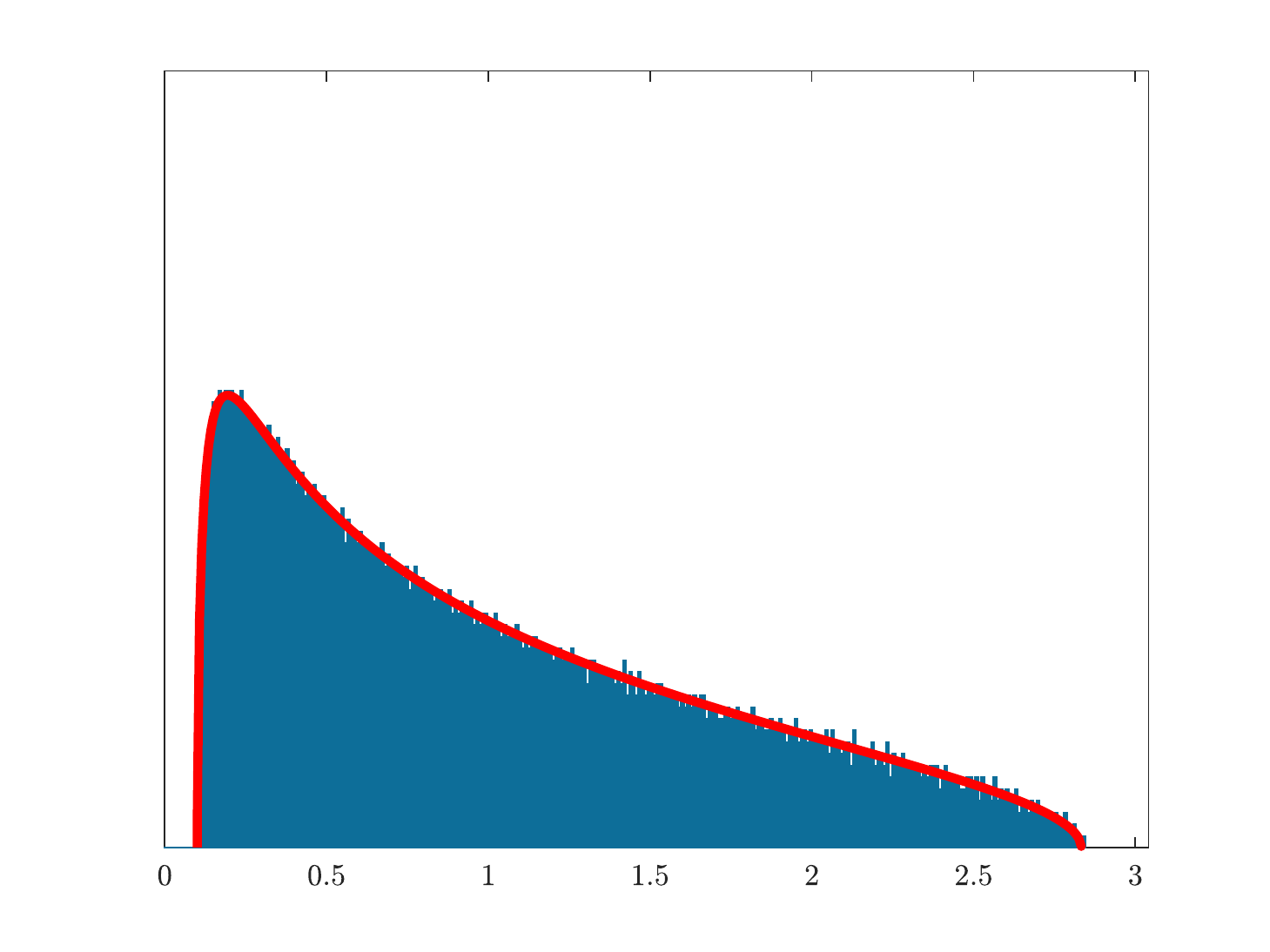}
	\end{minipage}
	\begin{minipage}{.30 \linewidth}
		\includegraphics[width=\linewidth]{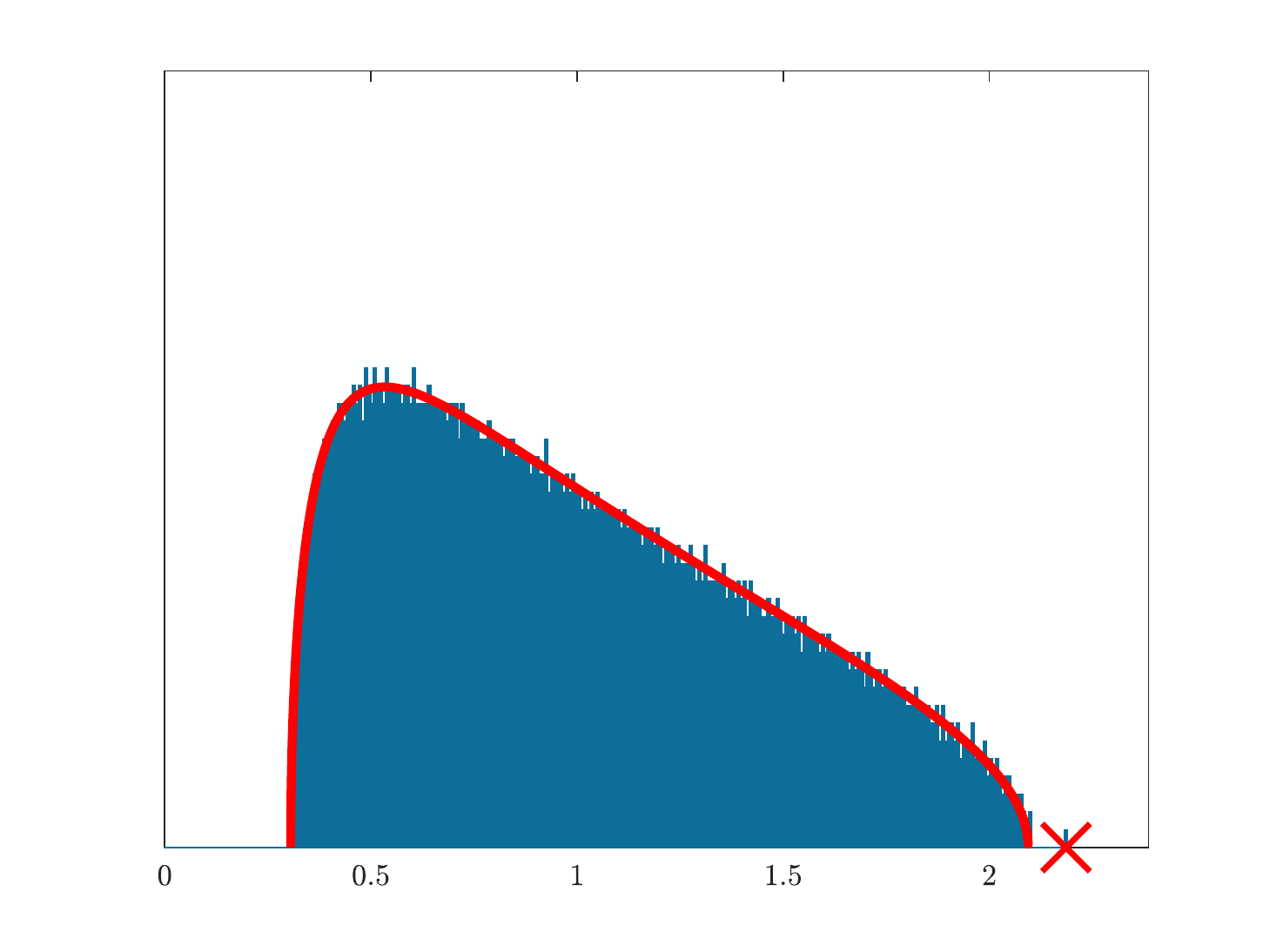}
	\end{minipage}
	\begin{minipage}{.30 \linewidth}
		\includegraphics[width=\linewidth]{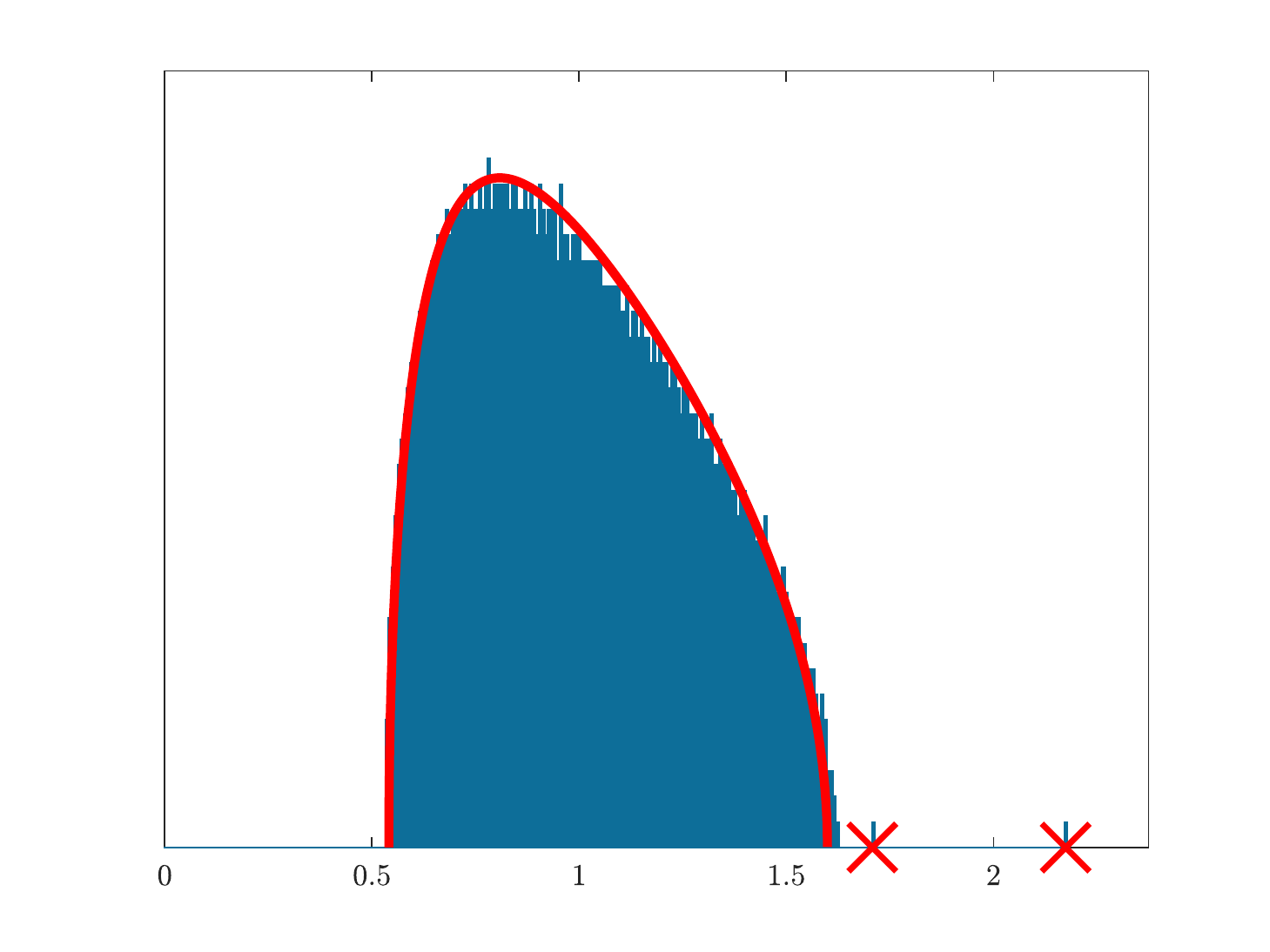}
	\end{minipage}
	\caption{ $W$ has standard Gaussian entries, the entries of $X$ are distributed as $\frac{1}{2}\delta_{\mathrm{Ber}}+\frac{1}{2}\delta_{\mathcal{N}(0,1)}$, and $f(x)=(x^2-1)/\sqrt{2}$. On the left, we take $\phi=.7$, $\psi=1.5$ with no outliers. For the middle picture, we have $\phi=.3$ and $\psi=1.5$ with one outlier appearing. Finally, on the right, we consider $\phi=0.07$ and $\psi=1$ with two outliers. The e.e.d. is the Marchenko--Pastur distribution (in red) but with different shapes given by $\phi/\psi$.}
	\label{fig:archi}
\end{figure}
 
\vspace{\baselineskip}
\begin{remark}We cannot fully describe the behavior of the second largest eigenvalue unfortunately as we can see that a second outlier can appear in Figures \ref{fig:theta_3} or \ref{fig:archi}. If  $\kurt(W)-1$ and/or $\kurt(X)-1$ is null, then no/at most one eigenvalue separates. This is e.g. the case when $W$ and/or $X$ has i.i.d. Bernoulli entries which is consistent with the  observation from \cite{LouartLiaoCouillet}.  
\end{remark}

In the general case where $\theta_2\neq 0$ and $\theta_3\neq 0$, the position of the largest eigenvalue of $M_{\mathrm{Lin}}$ is not known to our knowledge. However, one can instead take a related model which should give the same position, it is a model studied in \cite{benaychsingular}. Consider the matrix
\[
\hat{M}_{\mathrm{Lin}}=\frac{1}{m}\left(\sqrt{\theta_1-\theta_2}\tilde{Z} +\sqrt{\frac{\theta_2}{n_0}} {\tilde{W}\tilde{X}}+\sqrt{\frac{\theta_3\kappa}{n_0}}\hat{J}\right)
\left(\sqrt{\theta_1-\theta_2}\tilde{Z} +\sqrt{\frac{\theta_2}{n_0}} {\tilde{W}\tilde{X}}+\sqrt{\frac{\theta_3\kappa}{n_0}}\hat{J}\right)^\top
\]
where $\hat{J}=\mathbf{u} \mathbf{v}^\top$ where $\mathbf{u}$ and $\mathbf{v}$ are column vectors with i.i.d. standard Gaussian random variables. One can see that $M_{\mathrm{Lin}}$ is the same model where we change the rank 1 perturbation by $J=\mathbf{u} \mathbf{v}^\top$ where $\mathbf{u}$ and $\mathbf{v}$ are vectors whose entries are 1. However, we expect the position of the largest eigenvalue to be the same between $M_{\mathrm{Lin}}$ and $\hat{M}_{\mathrm{Lin}}.$ Define the $D$-transform of the measure $\mu$ defined in Theorem \ref{theo:bepe}, for $z>\sqrt{\mathbf{u}_+}$ (the right edge of the support of $\mu$),
\[
D_\mu(z)
\coloneqq
\left(
	\int \frac{z}{z^2-x}\d \mu(x)
\right)
\left(
	\gamma \int \frac{z}{z^2-x}\d \mu(x)
	+
	\frac{1-\gamma}{z}
\right).
\]
While $D_\mu$ does not have an explicit form, it can be written in terms of the Stieltjes transform of $\mu$ which follows an explicit quartic self-consistent equation \cite{BePe}*{Theorem 2.3}. The position of the largest eigenvalue of $\hat{M}_{\mathrm{Lin}}$ is given by,
\[
\lambda_1 \xrightarrow[n_1\to\infty]{}
\left\{
	\begin{array}{ll}
		D^{-1}_\mu(\alpha^{-1}) &\displaystyle{\text{if }\alpha > \lim_{z\downarrow \sqrt{\mathbf{u}_+}}(D_\mu(z))^{-1}}, \\[1.75ex]
		\mathbf{u}_+ &\text{otherwise}
	\end{array}
\right.
\]
where $D_\mu^{-1}$ denotes the functional inverse of $D_\mu$. Thus we see that if $\theta_3\kappa \psi^{-1}$ is large enough (depending on $\theta_1$, $\theta_2,$ $\phi,$ and $\psi$), we have separation of the largest eigenvalue from the bulk of the spectrum.
\paragraph{}
Our strategy for the proof is the following: 

\paragraph{Polynomial approximation.}First we approximate the function $f$ by a Taylor polynomial $P_k$, then replacing $Y$ with $Y_k$. The degree of the latter polynomial has to be large enough so that the largest eigenvalues of $Y$ and that of $Y_k$ are asymptotically the same. We use the same notations as in \cite{BePe}: define
\begin{equation}\label{eq:defpoly}
P_k(x):=\sum_{j=1}^k f^{(j)}(0)\frac{x^j-j!!}{j!}=\sum_{j=0}^k f^{(j)}(0)\frac{x^j}{j!}-a_{k}
\quad\text{with}\quad
a_k=\sum_{j=0}^k f^{(j)}(0)\frac{j!!}{j!}
\end{equation}
with the convention that $j!!=0$ for $j$ odd and $0!!=1$. This choice ensures that the polynomial is centered with respect to the Gaussian distribution.  
Now, we compare the Hermitized version of the matrix $M$ (up to finite rank modification), and define
\begin{equation}\label{eq:defyk}
\begin{gathered}
Y^{(a_k)}=f\left(\frac{WX}{\sqrt{n_0}}\right)-a_k,
\quad
Y_k=P_k\left(\frac{WX}{\sqrt{n_0}}\right),
\quad M_k:=\frac{1}{m}Y_kY_k^\top\\
\mathcal{E}=\frac{1}{\sqrt{m}}
\begin{pmatrix}
	0 &	Y^{(a_{k-1})}-Y_k\\
	\left(Y^{(a_{k-1})}-Y_k\right)^\top & 0
\end{pmatrix}.
\end{gathered}
\end{equation}

\paragraph{Result when $\theta_3(f)=0$.} This is an application of the moment analysis from \cite{BePe}.  We study $\mathbb{E} \text{Tr}M_k^q$ for large integers $q$ and show that these moments are still appropriately approximated by $\int x^q d\mu$. Indeed, the convergence of moments hold up to $q\sim (\ln n_1)^{1+\alpha}$ when $\theta_3(f)=0$ which allows us to give an upper bound on the largest eigenvalue based on the fact that $\Tr M_k^q \geqslant \lambda_1^q$ combined with Markov's inequality. The matching lower bound simply comes from the convergence of the e.e.d. to $\mu$.

\paragraph{Result in the general case.} We also need to push the analysis of moments from \cite{BePe} to high enough moments. However, in the general case, some matchings and graphs which are not contributing in the finite moment case starts to possibly contribute. These new matchings involve $\theta_3$ as well as $\kappa$. We then relate these moments to those of the e.e.d. of information-plus-noise random matrices.

The main difference between activation functions such that $\theta_3(f)=0$ or not comes from the combinatorics of the spectral moments of $M$. To investigate the behavior of the largest eigenvalues, it is now standard to use the high order moments of the e.e.d as first seen in \cite{furedi1981eigenvalues}. Indeed it is reasonable to expect that as $q$ grows to infinity, in an appropriate way, $\E \Tr M^q\simeq \lambda_1^q C$ for some constant $C$.
 For an activation function such that $\theta_3(f)=0$, the same graphs contributing to finite spectral moments are contributing for large moments.
 
 If we have $\theta_2(f)=0$, only a fraction of such graphs, namely the trees of simple cycles, contribute to the limiting spectral moments for fixed integer $q$. However, for large $q$ and thus regarding the behavior of the largest eigenvalue, it may hold true that some other admissible graphs contribute. Actually we show that a non negligible contribution comes from admissible graphs with a single long cycle and simple cycles for the rest. Thus in this case, one may observe a contribution which comes from the largest eigenvalues and not related to the support of the limiting measure $\mu$. In the general case, all admissible graphs contribute but we obtain a different contribution than that for fixed integer $q$, since different \emph{matchings} on admissible graphs can contribute.

\section{Preliminary steps}
\subsection{Approximating $f$ with a polynomial}
In this subsection, we show that one can replace the activation function $f$ with its Taylor approximation $P_k$.  We here fix the degree $k$ of the approximation  so that the behavior of the largest eigenvalue for $M$ and $M_k$ are the same in the large dimension limit.

We now show that $f$  can be approximated by a polynomial $P_k$. One considers the centered Taylor--Lagrange approximation polynomial $P_k$ defined in \eqref{eq:defpoly} and consider also $Y^{(a_k)}$ and $Y_k$ defined in \eqref{eq:defyk}.
Define then 
\begin{equation}
	R_k:= \frac{1}{m}Y^{(a_k)}(Y^{(a_k)})^\top -\frac{1}{m}Y_kY_k^\top.
\end{equation}
Let us set \begin{equation}\label{eq:notyf}
	\mathcal{A}_{n_1}(\delta_1)=\bigcap_{1\leqslant i\leqslant n_1}\bigcap_{1\leqslant j\leqslant m}
	\left\{
	\left\vert
	\left(
	\frac{WX}{\sqrt{n_0}}
	\right)_{ij}
	\right\vert
	\leqslant (\log n_1)^{1/2+\delta_1}
	\right\}.
\end{equation}

The spectral radius of $R_k$ can be bounded from above on the very high probability event $\mathcal{A}_{n_1}(\delta_1)$ defined in \eqref{eq:notyf} by
\begin{equation}\label{eq:boundspecradius}
	\rho({R_{k-1}})
	\leqslant
	C_f\sqrt{m}\frac{(\log n_1)^{(1+c_f)(1/2+\delta_1)k}}{k!}.
\end{equation}
The above goes to 0 as $n_1$ tends to infinity provided that $k \geqslant c_0 \frac{\log n_1}{\log \log n_1}$ for a constant $c_0>1$. From now on, we fix such a degree $k$ for the approximation. Then, the largest eigenvalue of $Y^{(a_k)}(Y^{(a_k)})^\top/m$ will be suitably approximated by that of $M_k=Y_kY_k^\top/m.$

Now, in order to control the largest eigenvalue of $M=YY^\top/m$, we note that $Y$ is a rank one perturbation of $Y^{(a_k)}$. Such a perturbation can possibly change the behavior of the largest eigenvalue but the perturbation here is small, indeed as the activation function $f$ has a zero Gaussian mean, we have that
	\[
	\sum_{j=0}^\infty f^{(j)}(0)\frac{j!!}{j!}=
	\mathds{E}\left[
	\sum_{j=0}^\infty f^{(j)}(0)\frac{\mathcal{N}^j}{j!}
	\right]
	=
	\mathds{E}\left[
	f(\mathcal{N})
	\right]
	=
	0,
	\]
	with $\mathcal{N}$ a standard Gaussian random variable. Thus, we have that there exists a constant $C>0$ such that 
	\[
	\vert 
	a_{k}
	\vert
	=
	\left\vert
	\sum_{j=0}^kf^{(j)}(0)\frac{j!!}{j!}
	\right\vert
	=
	\left\vert
	\sum_{j=k}^\infty f^{(j)}(0)\frac{j!!}{j!}
	\right\vert
	\leqslant
	\frac{C^{(k-1)}}{(k-1)^{(k-1)/2}}.
	\]
	By Proposition \ref{prop:highmoment}, $k$ can be as large as $c_0\frac{\log n_1}{\log \log n_1}$ for any $c_0>0$. In this case we obtain that for any $\varepsilon>0$ we have $a_k=\mathcal{O}(n^{-c_0/2+\varepsilon}).$ Now, we use the Hoffman--Wielandt inequality for singular values to finish, indeed we have
	\[
	\left\vert
	\sqrt{\lambda_1(Y^{(a_k)})}
	-
	\sqrt{\lambda_1(Y)}
	\right\vert
	\leqslant
	\sqrt{
		\sum_{i=1}^{n_1}
		\left(
		\sqrt{\lambda_i(Y^{(a_k)})}-\sqrt{\lambda_i(Y)}
		\right)^2
	}
	\leqslant
	\left\Vert
	Y^{(a_k)}-Y
	\right\Vert,
	\]
	with $\Vert A\Vert^2 = \Tr AA^\top$. One then has  that
	$
	\left\Vert
	Y^{(a_k)}-Y
	\right\Vert
	=
	\sqrt{a_k^2mn_1}=\mathcal{O}\left(
	\frac{C}{n_1^{c_0/2-2-\varepsilon}}
	\right).$
	We finally obtain the result by taking $c_0>4+2\varepsilon$.
	
\subsection{Reminders of combinatorics in \cite{BePe}}	

In this subsection, we recall the combinatorics of the spectral moments of $M_k$ from \cite{BePe} and consider $\mathbb{E}\text{Tr}M_k^q$ for some fixed integer $q$ (not depending on $n$). This subsection will be the basis for the combinatorics we develop as $q$ increases with the dimension.
 
By using the previous subsection, we can assume that the activation function is a polynomial. For ease, we here assume that the activation function is a monomial of degree $k$ and assume for ease $f(x)=x^k.$.

The expected value 
$$\E \Tr \left (\frac{YY^\top}{m} \right)^q , \text{ where }Y_{ij}=\left(\frac{WX}{\sqrt{n_0}}\right)_{ij}^k, 1\leq i \leq n_1, 1\leq j \leq m$$ can be encoded into the contribution of some graphs. Indeed 
one has that 
\begin{eqnarray}
	&&\frac{1}{m^q}\E\text{Tr}(YY^\top)^q=\sum_{i_1, \ldots, i_q=1}^{n_1}\sum_{j_1, \ldots, j_q=1}^m \E \prod_{r=1}^q Y_{i_r j_r}Y_{i_{r+1}j_r},
\end{eqnarray}
where we use the convention that $i_{q+1}=i_1.$ 
Each summand is encoded into a red bipartite graph: one simply draws an edge between the vertices $i_r$ and $j_r$ or $j_r$ and $i_{r+1}$. The graph is bipartite due to the different possible labeling of $i$-indices and $j$-indices. 
Now there may be some coincidences among the $i$ or $j$- indices. This is the place where admissible graphs arise whose definition we recall from \cite{BePe}.
\begin{definition}\label{def:graph} Let $q\geq 1$ be a given integer.
	A coincidence graph  is a connected graph built up from the simple (bipartite) cycle  of  vertices labeled $i_1, j_1, i_2, \ldots, i_q, j_q$ (in order) by  identifying some $i$-indices respectively and $j$-indices respectively. Such a graph is admissible if the formed cycles are joined to another by at most a common vertex and each edge belongs to a unique cycle. 
\end{definition}

We denote $\A(q,I_i,I_j,b)$ the number of admissible graphs with $2q$ edges, with $I_i$ identifications between $i$-indices, $I_j$ identifications between $j$-indices and $b$ cycles of length 2.

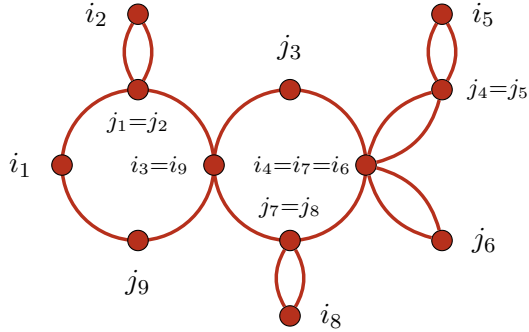
\begin{figure}[ht!]
	\centering
	\begin{tikzpicture}
		\draw[-,BrickRed,line width = .12em] (0,0) circle (1cm);
		\draw[-,BrickRed,line width = .12em] (2,0) circle (1cm);
		\node[fill=BrickRed,inner sep = 0pt, minimum size=.27cm, label=left:\fontsize{9}{9}$i_3{=}i_9$] (2) at (1,0) {};
		\node[fill=BrickRed,inner sep = 0pt, minimum size=.27cm,label=left:$i_1$] at (-1,0) {};
		\node[fill=BrickRed,inner sep = 0pt, minimum size=.27cm,label={[label distance = -.8em]below:\fontsize{9}{9}$j_1{=}j_2$}] (1) at (0,1) {};
		\node[fill=BrickRed,inner sep = 0pt, minimum size=.27cm, label=below:$j_9$] at (0,-1) {};
		\node[fill=BrickRed,inner sep = 0pt, minimum size=.27cm,label=left:$i_2$] (6) at (0,2) {};
		\node[fill=BrickRed,inner sep = 0pt, minimum size=.27cm, label=above:$j_3$] at (2,1) {};
		\node[fill=BrickRed,inner sep = 0pt, minimum size=.27cm,label={[label distance = -.8em]above:\fontsize{9}{9}$j_7{=}j_8$}] (5) at (2,-1) {};
		\node[fill=BrickRed,inner sep = 0pt, minimum size=.27cm,label={[label distance = -.3em]left:\fontsize{9}{9}$i_4{=}i_7{=}i_6$}] (3) at (3,0) {};
		\node[fill=BrickRed,inner sep = 0pt, minimum size=.27cm, label=right:$i_8$] (9) at (2,-2) {};
		\node[fill=BrickRed,inner sep = 0pt, minimum size=.27cm, label=right:\fontsize{9}{9}$j_4{=}j_5$] (4) at (4,1) {};
		\node[fill=BrickRed,inner sep = 0pt, minimum size=.27cm, label=right:$j_6$] (8) at (4,-1) {};
		\node[fill=BrickRed,inner sep = 0pt, minimum size=.27cm, label=right:$i_5$] (7) at (4,2) {};
		
		\draw[-,BrickRed, line width=.12em] (1) edge[bend left] (6); 
		\draw[-,BrickRed, line width=.12em] (1) edge[bend right] (6); 
		
		\draw[-,BrickRed, line width=.12em] (5) edge[bend left] (9);
		\draw[-,BrickRed, line width=.12em] (5) edge[bend right] (9); 
		
		\draw[-,BrickRed, line width=.12em] (3) edge[bend left] (4); 
		\draw[-,BrickRed, line width=.12em] (3) edge[bend right] (4); 
		
		\draw[-,BrickRed, line width=.12em] (3) edge[bend left] (8); 
		\draw[-,BrickRed, line width=.12em] (3) edge[bend right] (8); 
		
		\draw[-,BrickRed, line width=.12em] (4) edge[bend left] (7); 
		\draw[-,BrickRed, line width=.12em] (4) edge[bend right] (7); 
	\end{tikzpicture}
	\caption{Example of an admissible graph with 7 cycles including 5 cycles of length 2, 3 $i-$identifications, 3 $j-$identifications belonging in $\mathcal{A}(9, 3,3,5)$.}
	\label{fig:admiss}
\end{figure}

By specifying the activation function,  we have,

\[
Y_{ij}=\left(\frac{WX}{\sqrt{n_0}}\right)^k=\frac{1}{n_0^{k/2}}\sum_{\ell_1,\dots,\ell_k=1}^{n_0}\prod_{p=1}^k W_{i\ell_p}X_{\ell_p j}.
\]
This can be encoded in the graph by adding $k$ $\emph{blue}$ vertices on each red edge of an admissible (or not) graph. A blue vertex labeled $l_p$ on the red edge $(i,j)$ stands for  $W_{i\ell_p}X_{\ell_p j}$ using this encoding. 
Thus, to get a non vanishing contribution to the spectral moment, each blue vertices have to be matched since $W$ and $X$ entries are independent and centered.
Indeed each entry $W_{i\ell_p}$ or $X_{\ell_p j}$ has to arise at least twice to give a non zero contribution. \\
We need to compute the leading contribution for $i$'s and $j$'s (which corresponds to admissible graphs defined above) and on top of it, perform matchings between blue vertices on each red edge  which we call a \emph{niche}. Note that considering an even, odd or general polynomial changes the parity of the number of blue vertices in each niche and thus the combinatorics for the blue matchings. However, changing the activation function does not change the red graph and the leading contribution is given by admissible graphs as shown in \cite{BePe}.

Consider the simple cycle of length $2q$ whose vertices are labeled alternatively with pairwise distinct $i-$indices from $\{1, \ldots , n_1\}$ and $j-$indices from $\{1, \ldots, m\}$. All the graphs contributing to the expectation are obtained from this simple cycle  by identifications of some vertices. These are the red graphs from \cite{BePe} as illustrated in Figure \ref{fig:admiss}. To be more precise, each red edge (also called a niche) is decorated by $k$ blue half edges which have to be matched (into pairs or cycles) so that no blue half edge is single. This is the necessary condition so that the contribution of the graph to the above expected value does not vanish. The combined contribution of such graphs
has been shown in \cite{BePe} to split into two parts: \begin{itemize}
\item the contribution of \emph{admissible} red graphs: the red graphs are cactus graphs, i.e. connected graphs where no edge belongs to two cycles as defined in Definition \ref{def:graph}. The typical matchings of blue half edges is such that in each long red cycle of length greater than $2$, there exists a single long blue cycle connecting one half edge in each niche. The other half edges inside a niche are matched according to a perfect matching. Each long red cycle of length $2q$ gives a contribution $\theta_2 (P_k)^{q}$.
 If the red cycle has length 2 (it is called simple in this case) then the blue edges are matched pairwise arbitrarily. Each simple cycle yields a contribution $\theta_1 (P_k).$ This is illustrated in Figure \ref{fig:matchadmiss}
 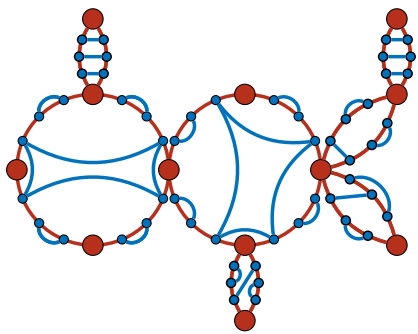
\begin{figure}[!ht]
 	\centering
 		\begin{tikzpicture}
		\draw[-,BrickRed,line width = .12em] (0,0) circle (1cm);
		\draw[-,BrickRed,line width = .12em] (2,0) circle (1cm);
		\node[fill=BrickRed, inner sep = 0pt, minimum size=.27cm] (2) at (1,0) {};
		\node[fill=BrickRed, inner sep = 0pt, minimum size=.27cm] at (-1,0) {};
		\node[fill=BrickRed, inner sep = 0pt, minimum size=.27cm] (1) at (0,1) {};
		\node[fill=BrickRed, inner sep = 0pt, minimum size=.27cm] at (0,-1) {};
		\node[fill=BrickRed, inner sep = 0pt, minimum size=.27cm] (6) at (0,2) {};
		\node[fill=BrickRed, inner sep = 0pt, minimum size=.27cm] at (2,1) {};
		\node[fill=BrickRed, inner sep = 0pt, minimum size=.27cm] (5) at (2,-1) {};
		\node[fill=BrickRed, inner sep = 0pt, minimum size=.27cm] (3) at (3,0) {};
		\node[fill=BrickRed, inner sep = 0pt, minimum size=.27cm] (9) at (2,-2) {};
		\node[fill=BrickRed, inner sep = 0pt, minimum size=.27cm] (4) at (4,1) {};
		\node[fill=BrickRed, inner sep = 0pt, minimum size=.27cm] (8) at (4,-1) {};
		\node[fill=BrickRed, inner sep = 0pt, minimum size=.27cm] (7) at (4,2) {};
		
		\draw[-,BrickRed, line width=.12em] (1) edge[bend left] (6); 
		\draw[-,BrickRed, line width=.12em] (1) edge[bend right] (6); 
		
		\draw[-,BrickRed, line width=.12em] (5) edge[bend left] (9);
		\draw[-,BrickRed, line width=.12em] (5) edge[bend right] (9); 
		
		\draw[-,BrickRed, line width=.12em] (3) edge[bend left] (4); 
		\draw[-,BrickRed, line width=.12em] (3) edge[bend right] (4); 
		
		\draw[-,BrickRed, line width=.12em] (3) edge[bend left] (8); 
		\draw[-,BrickRed, line width=.12em] (3) edge[bend right] (8); 
		
		\draw[-,BrickRed, line width=.12em] (4) edge[bend left] (7); 
		\draw[-,BrickRed, line width=.12em] (4) edge[bend right] (7); 
		\draw[-, BrickRed, line width=.12em] (1) edge[bend right]  node[draw=black,line width=.05em,pos=.20, fill=RoyalBlue, inner sep = 0pt, minimum size = .12cm] (161) {} node[draw=black,line width=.05em,midway, fill=RoyalBlue, inner sep = 0pt, minimum size = .12cm] (162) {} node[draw=black,line width=.05em,pos=.80, fill=RoyalBlue, inner sep = 0pt, minimum size = .12cm] (163) {} (6) ;
		\draw[-, BrickRed, line width=.12em] (1) edge[bend left]  node[draw=black,line width=.05em,pos=.20, fill=RoyalBlue, inner sep = 0pt, minimum size = .12cm] (164) {} node[draw=black,line width=.05em,midway, fill=RoyalBlue, inner sep = 0pt, minimum size = .12cm] (165) {} node[draw=black,line width=.05em,pos=.80, fill=RoyalBlue, inner sep = 0pt, minimum size = .12cm] (166) {} (6) ;
		
		\draw[-, BrickRed, line width=.12em] (5) edge[bend right]  node[draw=black,line width=.05em,pos=.20, fill=RoyalBlue, inner sep = 0pt, minimum size = .12cm] (591) {} node[draw=black,line width=.05em,midway, fill=RoyalBlue, inner sep = 0pt, minimum size = .12cm] (592) {} node[draw=black,line width=.05em,pos=.80, fill=RoyalBlue, inner sep = 0pt, minimum size = .12cm] (593) {} (9) ;
		\draw[-, BrickRed, line width=.12em] (5) edge[bend left]  node[draw=black,line width=.05em,pos=.20, fill=RoyalBlue, inner sep = 0pt, minimum size = .12cm] (594) {} node[draw=black,line width=.05em,midway, fill=RoyalBlue, inner sep = 0pt, minimum size = .12cm] (595) {} node[draw=black,line width=.05em,pos=.80, fill=RoyalBlue, inner sep = 0pt, minimum size = .12cm] (596) {} (9) ;
		
		\draw[-, BrickRed, line width=.12em] (3) edge[bend right]  node[draw=black,line width=.05em,pos=.20, fill=RoyalBlue, inner sep = 0pt, minimum size = .12cm] (341) {} node[draw=black,line width=.05em,midway, fill=RoyalBlue, inner sep = 0pt, minimum size = .12cm] (342) {} node[draw=black,line width=.05em,pos=.80, fill=RoyalBlue, inner sep = 0pt, minimum size = .12cm] (343) {} (4) ;
		\draw[-, BrickRed, line width=.12em] (3) edge[bend left]  node[draw=black,line width=.05em,pos=.20, fill=RoyalBlue, inner sep = 0pt, minimum size = .12cm] (344) {} node[draw=black,line width=.05em,midway, fill=RoyalBlue, inner sep = 0pt, minimum size = .12cm] (345) {} node[draw=black,line width=.05em,pos=.80, fill=RoyalBlue, inner sep = 0pt, minimum size = .12cm] (346) {} (4) ;
		
		\draw[-, BrickRed, line width=.12em] (3) edge[bend right]  node[draw=black,line width=.05em,pos=.20, fill=RoyalBlue, inner sep = 0pt, minimum size = .12cm] (381) {} node[draw=black,line width=.05em,midway, fill=RoyalBlue, inner sep = 0pt, minimum size = .12cm] (382) {} node[draw=black,line width=.05em,pos=.80, fill=RoyalBlue, inner sep = 0pt, minimum size = .12cm] (383) {} (8) ;
		\draw[-, BrickRed, line width=.12em] (3) edge[bend left]  node[draw=black,line width=.05em,pos=.20, fill=RoyalBlue, inner sep = 0pt, minimum size = .12cm] (384) {} node[draw=black,line width=.05em,midway, fill=RoyalBlue, inner sep = 0pt, minimum size = .12cm] (385) {} node[draw=black,line width=.05em,pos=.80, fill=RoyalBlue, inner sep = 0pt, minimum size = .12cm] (386) {} (8) ;

		\draw[-, BrickRed, line width=.12em] (4) edge[bend right]  node[draw=black,line width=.05em,pos=.20, fill=RoyalBlue, inner sep = 0pt, minimum size = .12cm] (471) {} node[draw=black,line width=.05em,midway, fill=RoyalBlue, inner sep = 0pt, minimum size = .12cm] (472) {} node[draw=black,line width=.05em,pos=.80, fill=RoyalBlue, inner sep = 0pt, minimum size = .12cm] (473) {} (7) ;
		\draw[-, BrickRed, line width=.12em] (4) edge[bend left]  node[draw=black,line width=.05em,pos=.20, fill=RoyalBlue, inner sep = 0pt, minimum size = .12cm] (474) {} node[draw=black,line width=.05em,midway, fill=RoyalBlue, inner sep = 0pt, minimum size = .12cm] (475) {} node[draw=black,line width=.05em,pos=.80, fill=RoyalBlue, inner sep = 0pt, minimum size = .12cm] (476) {} (7) ;
		
		\foreach \a in {1,2,...,15}{
		\ifthenelse{\a=4 \OR \a=8 \OR \a=12}{}{\node[fill=RoyalBlue, inner sep = 0pt, minimum size=.12cm] (\a) at (360/4+\a*360/16: 1cm) {};}
		}
		
		\begin{scope}[xshift=2cm]
		\foreach \a in {1,2,...,15}{
		\ifthenelse{\a=4 \OR \a=8 \OR \a=12}{}{\node[fill=RoyalBlue, inner sep = 0pt, minimum size=.12cm] (\a+1) at (360/4+\a*360/16: 1cm) {};}
		}
		\end{scope}
	
		\draw[-, RoyalBlue, line width=.12em] (161) edge  (164);
		\draw[-, RoyalBlue, line width=.12em] (162) edge  (165);
		\draw[-, RoyalBlue, line width=.12em] (163) edge  (166);
		
		\draw[-, RoyalBlue, line width=.12em] (591) edge[bend left=60]  (592);
		\draw[-, RoyalBlue, line width=.12em] (593) edge  (594);
		\draw[-, RoyalBlue, line width=.12em] (595) edge [bend right=60] (596);
		
		\draw[-, RoyalBlue, line width=.12em] (381) edge  (385);
		\draw[-, RoyalBlue, line width=.12em] (384) edge[bend left=60] (386);
		\draw[-, RoyalBlue, line width=.12em] (382) edge[bend right=60]  (383);
		
		\draw[-, RoyalBlue, line width=.12em] (341) edge  (344);
		\draw[-, RoyalBlue, line width=.12em] (342) edge[bend right=60]  (343);
		\draw[-, RoyalBlue, line width=.12em] (345) edge[bend left=60]  (346);
			
		\draw[-, RoyalBlue, line width=.12em] (471) edge  (474);
		\draw[-, RoyalBlue, line width=.12em] (472) edge  (475);
		\draw[-, RoyalBlue, line width=.12em] (473) edge  (476);
		
		\draw[-, RoyalBlue, line width=.12em] (1) edge[bend right=60]  (2);
		\draw[-, RoyalBlue, line width=.12em] (6) edge[bend right=60]  (7);
		\draw[-, RoyalBlue, line width=.12em] (9) edge[bend right=60]  (10);
		\draw[-, RoyalBlue, line width=.12em] (14) edge[bend right=60]  (15);
		
		\draw[-, RoyalBlue, line width=.12em] (3) edge[bend right]  (13);
		\draw[-, RoyalBlue, line width=.12em] (13) edge[bend right]  (11);
		\draw[-, RoyalBlue, line width=.12em] (11) edge[bend right]  (5);
		\draw[-, RoyalBlue, line width=.12em] (5) edge[bend right]  (3);
		
		\draw[-, RoyalBlue, line width=.12em] (1) edge[bend right=60]  (2);
		\draw[-, RoyalBlue, line width=.12em] (6) edge[bend right=60]  (7);
		\draw[-, RoyalBlue, line width=.12em] (9) edge[bend right=60]  (10);
		\draw[-, RoyalBlue, line width=.12em] (14) edge[bend right=60]  (15);

		\draw[-, RoyalBlue, line width=.12em] (3+1) edge[bend right=60]  (2+1);
		\draw[-, RoyalBlue, line width=.12em] (14+1) edge[bend right=60]  (15+1);
		\draw[-, RoyalBlue, line width=.12em] (10+1) edge[bend right=60]  (11+1);
		\draw[-, RoyalBlue, line width=.12em] (6+1) edge[bend right=60]  (5+1);
		
		\draw[-, RoyalBlue, line width=.12em] (1+1) edge[bend right]  (13+1);
		\draw[-, RoyalBlue, line width=.12em] (13+1) edge[bend right]  (9+1);
		\draw[-, RoyalBlue, line width=.12em] (9+1) edge[bend right]  (7+1);
		\draw[-, RoyalBlue, line width=.12em] (7+1) edge[bend right]  (1+1);
	\end{tikzpicture}
	\caption{Example of a matching on the same admissible graph as in Figure \ref{fig:admiss} for $k=3$. The blue matchings are different in the length of the cycles is equal or greater than 2.}
	\label{fig:matchadmiss}
 \end{figure}
\item the contribution of \emph{non admissible }red graphs: there are some additional identifications between the red vertices in such a way that the resulting red graph is not a tree of cycles. The matching of blue edges is made arbitrarily (in such a way that there is still no single blue half edge at the end).
\end{itemize}

\section{High moment asymptotics when \texorpdfstring{$f$}{f} is an odd polynomial\label{sec:3}}
This section is concerned with odd polynomials which are a class of functions such that $\theta_3(f)=0$ and thus, according to Theorem \ref{theo:edge}, where the largest eigenvalue sticks to the bulk.  In particular, we consider a polynomial of degree which can grow with the dimension with the matrix and of Taylor-type:
\[
P_k = \sum_{i=0, i \text{ odd}}^k \frac{a_i}{i!}x^i.
\] 
Here the coefficients $a_i$ are such that \eqref{eq:assum2f} holds true. 

\begin{proposition} \label{prop:highmoment}

Let $0<\alpha_1<\alpha_2$ and $q=q(n_1)$ be a sequence such that $q(n_1)\leqslant (\log n_1)^{1+\alpha_1}$. Assume that $$k\leqslant k_0:=\frac{1}{1+\alpha_2}\frac{\log n_1}{\log\log n_1},$$ then
\[
\mathds{E}\left[
	\mathrm{Tr}\, M_k^{2q}
\right]
=
n_1\mathfrak{m}_{2q}(P_k)(1+o(1)).
\]
Assume that $k\geqslant k_0$, then there exists a constant $C>0$ such that 
\[
\mathds{E}\left[
	\mathrm{Tr}\, M_k^{2q}
\right]
\leqslant
n_1\mathfrak{m}_{2q}(P_{k_0})
\left(
1+o(1)
\right)
\]
where $\mathfrak{m}_q$ is defined by (\ref{eq:defmoment}) and $\mathfrak{m}_{2q}(P_{k_0})$ corresponds to the case where $f=P_{k_0}$.
\end{proposition}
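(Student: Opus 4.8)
The plan is to carry through the graph expansion of \cite{BePe} recalled above, but now tracking every error term uniformly in $q\leqslant(\log n_1)^{1+\alpha_1}$ and in the degree $k\leqslant k_0$. First I would reduce, exactly as in Subsection~3.2, to $P_k$ being a constant times a centered monomial, so that $\mathds{E}\,\mathrm{Tr}\,M_k^{2q}$ becomes a sum, over all red bipartite graphs obtained from the cycle on $4q$ vertices by identifying some $i$-vertices and some $j$-vertices, of a product over the red edges (niches) of the contributions of the blue matchings of the $k$ half-edges sitting in each niche, subject to the no-singleton constraint; a general odd polynomial is recovered by multilinearity. Throughout, one works on the event $\mathcal{A}_{n_1}(\delta_1)$ of \eqref{eq:notyf}, on which each entry of $WX/\sqrt{n_0}$ is at most $(\log n_1)^{1/2+\delta_1}$: its complement has super-polynomially small probability by \eqref{eq:assumwx} and, since all moments of $W,X$ are controlled, contributes $o\bigl(n_1\mathfrak{m}_{2q}(P_k)\bigr)$.

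Next I would isolate the leading term. As in \cite{BePe}, the admissible (cactus) red graphs decorated with the "typical" blue matching — one long blue cycle through a single half-edge of each niche of every long red cycle, perfect matchings of the remaining half-edges inside each niche, arbitrary pairings inside simple $2$-cycles — sum to exactly $n_1\,\mathfrak{m}_{2q}(P_k)$, with $\mathfrak{m}_{2q}$ as in \eqref{eq:defmoment}; oddness of $P_k$ guarantees $k-1$ is even, so there is no parity obstruction to these within-niche perfect matchings. Everything else must be shown to be $o\bigl(n_1\mathfrak{m}_{2q}(P_k)\bigr)$. There are two families of corrections. First, non-admissible red graphs: by the combinatorial lemmas of \cite{BePe}, each identification beyond the cactus structure strictly lowers the Euler-characteristic count, so it costs a genuine factor $n_0^{-1}$, up to a $\mathrm{polylog}(n_1)$ loss from bounding $P_k$ on $\mathcal{A}_{n_1}(\delta_1)$ and from the at most $(4q)^{2}$ choices of the two vertices to be glued. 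Second, atypical blue matchings on admissible red graphs: here the structural hypothesis of this section is decisive — since $P_k$ is odd, $\theta_3(P_k)=0$, which is precisely the weight attached to the one atypical family (a blue cycle closing inside a single niche with one leftover half-edge) that would otherwise survive at order $n_0^{0}$; every other atypical blue matching again loses a full power of $n_0$ and is estimated as before. Consequently a configuration with $d\geqslant 1$ defects (of either type) is bounded by $n_1\mathfrak{m}_{2q}(P_k)\bigl(\mathrm{polylog}(n_1)/n_0\bigr)^{d}$; summing the geometric series yields a relative error $\mathrm{polylog}(n_1)/n_0=o(1)$, since $n_0\asymp n_1$ and both $q$ and $k$ are at most polylogarithmic. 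The bound for $k\geqslant k_0$ follows from the same expansion applied to $P_k=P_{k_0}+(\text{tail})$: the Taylor coefficients $a_i/i!$ with $i>k_0$ are so small — by \eqref{eq:assum2f} and the calibration $k_0=\frac{1}{1+\alpha_2}\frac{\log n_1}{\log\log n_1}$, exactly as in the estimate \eqref{eq:boundspecradius} on $\rho(R_{k-1})$ — that the tail contributes $o\bigl(n_1\mathfrak{m}_{2q}(P_{k_0})\bigr)$.

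The main obstacle is this uniform-in-$(q,k)$ control of the errors. In the fixed-$q$ regime of \cite{BePe} it is enough that each defect costs $o(1)$; once $q$ grows one must (a) verify that each defect costs $n_0^{-1+o(1)}$ with an \emph{explicit} polynomial dependence on $q$ and $k$ hidden in the $o(1)$, which forces a careful classification of the degenerate coincidences among the $i$-, $j$- and $\ell$-indices and systematic use of the truncation event to bound values of $P_k$, and (b) check that the number of red graphs (and of blue matchings on them) carrying a prescribed defect pattern grows only polynomially in $q$ per defect, so that the geometric resummation over $d$ is legitimate rather than overwhelmed by a super-exponential count. The role of the oddness assumption is exactly to keep us in the "sticking" regime: it removes the order-$n_0^{0}$ atypical matchings which, in the general case, are responsible for the $\theta_3$-weighted long cycles of \eqref{2.7} and for the rank-two correction $J_2$ of Theorem~\ref{theo:edge2}; were they present, the relative error would fail to vanish and $\lambda_1$ would detach from $\mathbf{u}_+$.
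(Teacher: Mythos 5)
Your skeleton is the same as the paper's (reduce to an odd monomial by linearity, rerun the graph expansion of \cite{BePe}, control the defective configurations, then handle $k\geqslant k_0$ through the smallness of the high-order Taylor coefficients), but the quantitative error control, which is the whole content of the proposition, is wrong as stated. The cost of a defect is \emph{not} $\mathrm{polylog}(n_1)/n_0$ and the number of blue matchings attached to a defect is \emph{not} polynomial in $q$: an atypical matching lets the $k$ blue half-edges of a niche connect to any of the $\sim 2qk$ half-edges elsewhere, so the relevant error terms are of the type $k(k-1)!!\,q^{k}/(k!\,n_0)$ and $(k_i+k_j)!!\,q^{k_i+k_j}/(k_i!k_j!\,n_0)$ (these are the bounds (3.12)--(3.16) of \cite{BePe} that the paper reuses). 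For $k\asymp \log n_1/\log\log n_1$ the factor $q^{k}$ is \emph{polynomially} large in $n_1$; the errors are $o(1)$ only because $q^{k}\ll n_0$ when $k\leqslant k_0$, i.e. $q^{k_0}\approx n_1^{(1+\alpha_1)/(1+\alpha_2)}\ll n_0$, and this is precisely where the hypothesis $\alpha_1<\alpha_2$ and the calibration of $k_0$ enter. Your proof never invokes this relation, so your geometric resummation over the number of defects is unjustified; the issue you flag as ``obstacle (b)'' is not a technicality to be checked but the actual crux, and your claimed answer to it (polynomial-in-$q$ counts) is false.

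The second half has a related gap. For $k\geqslant k_0$ one is exactly in the regime $q^{k}\gg n_0$, where — as the paper points out — the defect terms attached to high-degree niches can a priori \emph{exceed} the corresponding admissible contributions, so it is not enough to say that the tail coefficients $a_i/i!$, $i>k_0$, are small: one must show that the factorial decay beats the $q^{k_i}$ blow-up, which the paper does by a Stirling estimate showing $k_i(k_i-1)!!\,q^{k_i}/(k_i!\,n_0)$ is decreasing in $k_i$ and $o(1)$ at the worst case $k_i=k_0$ (again using $\alpha_2>\alpha_1$), together with the statement that $\theta_1(P_k)-\theta_1(P_{k_0})$ and $\theta_2(P_k)-\theta_2(P_{k_0})$ are exponentially small. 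Your substitute — appealing to the operator-norm estimate \eqref{eq:boundspecradius} — is not available at the threshold: that bound is $O\bigl(\sqrt{m}\,(\log n_1)^{(1+c_f)(1/2+\delta_1)k}/k!\bigr)$ and vanishes only for $k\geqslant c_0\log n_1/\log\log n_1$ with $c_0>1$ (the paper in fact needs $c_0>4$), whereas $k_0$ has prefactor $1/(1+\alpha_2)<1$, so the tail $P_k-P_{k_0}$ cannot be discarded by a norm perturbation at this scale. (The truncation event $\mathcal{A}_{n_1}(\delta_1)$ you introduce is also not how the paper proceeds — the moments are computed exactly from the centered entries, with high moments of $W_{11},X_{11}$ controlled by \eqref{eq:assumwx} — but that is a minor stylistic difference compared with the missing $q^{k}$ versus $n_0$ analysis.)
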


The proof of Proposition \ref{prop:highmoment}  actually follows from estimated already obtained in \cite{BePe}.

\begin{proof} We here show Proposition \ref{prop:highmoment}. To that aim, we refer to \cite{BePe}. 
We first assume that $P_k$ is a monomial of odd degree $k$ since this will be the one that contributes to the moments: the extension to an arbitrary odd polynomial then follows by linearity.

We know that for $q$ up to order $(\log n_1)^{1+\alpha_1}$ and $k<k_0$ we have that 
\[
\overline{m}_{2q}^{(P_k)}:=\frac{1}{n_1}\E \Tr M_k^{2q}=\mathfrak{m}_{2q}(P_k)(1+o(1))
\]
which gives the first result of the proposition. This follows from Section 3 in \cite{BePe} and in particular the bounds (3.12), (3.13), (3.14) and (3.16) which hold true  
provided  $q^k\ll n_0$ -which is satisfied for our choice of $q$ and $k<k_0$. This can readily be extended to an arbitrary odd polynomial of degree $k<k_0.$ 

However, in order to obtain the appropriate polynomial approximation, we need the degree $k$ to be larger than $\frac{\log n_1}{\log \log n_1}.$ This is not a problem because of our choice of polynomial. Indeed, for such high degrees, the $k!$ normalization makes the contribution of very high degrees negligible. 
Let $k>k_0$ be chosen, then we can write, if we normalize so that the variances are equal to $1$,
\[
\theta_1(P_k)=\mathds{E}\left[ P_k(\mathcal{N})^2\right]
=
\mathds{E}\left[
	\left(
		P_k(\mathcal{N})-P_{k_0}(\mathcal{N})
	\right)^2
\right]
+
\mathds{E}\left[
	P_{k_0}(\mathcal{N})^2
\right]
+
2\mathds{E}\left[
	P_{k_0}(\mathcal{N})(P_{k}(\mathcal{N})-P_{k_0}(\mathcal{N})
\right]
\]
where $\mathcal{N}$ is a standard Gaussian random variable. By the Cauchy--Schwarz inequality, we now simply need to bound the first term, 
\[
\mathds{E}\left[
	\left(
		P_k(\mathcal{N})-P_{k_0}(\mathcal{N})
	\right)^2
\right]
=
\sum_{\substack{i,j=k_0+1\\i+j\text{ even}}}^k
a_ia_j\frac{(i+j)!!}{i! j!}
\]
which goes to zero exponentially fast by Stirling's formula. Similarly one can show the same holds true for $\theta_2(P_{k})$. 
Thus one has for any $D>0$,
\begin{eqnarray}
&\theta_1(P_k)&=\theta_1(P_{k_0})+\mathcal{O}\left(N^{-D}\right);\cr
&\theta_2(P_k)&=\theta_2(P_{k_0})+\mathcal{O}\left(N^{-D}\right). \end{eqnarray}

Note that, when computing the leading order of the moment $\E \Tr \left (\frac{YY^\top}{m} \right)^q$, this is the only part where the polynomial $P_k$ intervenes, since the admissible graphs do not depend on the activation function. However, as we choose $k$ large, actually large enough so that $q^k \gg n_0$, we need to check that the errors do not explode and actually vanish for $f_{k_0}=\sum_{i>k_0}^k\frac{a_i}{i!}x^i$. One may note that these ``errors'' may contribute more than the corresponding admissible graphs but one needs to show that their contribution is still negligible. One can check from the previous analysis in \cite{BePe} that the largest error comes from (3.12), for such a polynomial $P_{k_0}$.   Actually (3.16) is here replaced with 
\[ n_0\sum_{p=2}^q \left (\frac{Cp^k}{n_0}\right)^p \left (\frac{ \sqrt{(2k)!!}}{k!}\right)^p,\]
when considering the contribution of non admissible graphs. 
We thus need to bound the two quantities for $k_i,k_j>k_0$
\[
\frac{k_i(k_i-1)!!}{k_i!}\frac{q^{k_i}}{n_0}\quad\text{and}\quad \frac{(k_i+k_j)!!}{k_i!k_j!}\frac{q^{k_i+k_j}}{n_0}.
\]
We bound the first one but the second one can be bounded in the same way. Note that these bounds come from the two different behaviors in the case of a cycle of length 2 and larger cycles. Now  using Stirling's formula we can see that
\[
\frac{k_i(k_i-1)!!}{k_i!}\frac{q^{k_i}}{n_0}
=
\mathcal{O}\left(
\frac{\sqrt{k_i}}{n_0}
\left(
\sqrt{e}
\frac{q}{\sqrt{k_i}}
\right)^{k_i}
\right).
\]
This bound is decreasing in $k_i$ so that we need to check its order for $k_i=k_0=\frac{1}{1+\alpha_2}\frac{\log n_1}{\log\log n_1}.$ And we obtain the following bound,
\[
\frac{k_i(k_i-1)!!}{k_i!}\frac{q^{k_i}}{n_0}
=
\mathcal{O}\left(
\frac{\psi(n_1)}{n_1^{-\frac{1+2(\alpha_2-\alpha_1)}{2(1+\alpha_2)}}}
\right),
\]
with the function $\psi$ given by
\[
\psi(n_1)
=
\sqrt{
	\frac{1}{1+\alpha_2}\frac{\log n_1}{\log\log n_1}
}
\left(
	\frac{e}{1+\alpha_2}\log\log n_1
\right)^{\frac{1}{2(1+\alpha_2)}\frac{\log n_1}{\log\log n_1}}
=
\mathcal{O}(n_1^\varepsilon),
\]
for any $\varepsilon>0$. Thus, recalling that $\alpha_2>\alpha_1$ we have that, by taking $\varepsilon$ small enough, 
\[
\frac{k_i(k_i-1)!!}{k_i!}\frac{q^{k_i}}{n_0}
=
\mathcal{O}\left(
\frac{n_1^\varepsilon}{n_1^{-\frac{1+2(\alpha_2-\alpha_1)}{2(1+\alpha_2)}}}
\right)
=
o(1).
\]
\end{proof}

\section{Behavior of the largest eigenvalue when $f$ is an even polynomial} \label{sec:4}
This section is devoted to the proof of Theorem \ref{theo:sep_easy} when $f=P_{2k}$ is a polynomial of degree $2k$: 

 $$P_{2k}(x)=x^{2k}-(2k)!!.$$  In particular we have $\theta_2(f)=0$ 
 The aim of this section is to show that the largest eigenvalue may separate from the rest of the spectrum for such an activation function. \\
 
 We first start with some reminders before entering the proof. \\
 Let us consider the moment of order $q$ of the e.e.d. of the matrix $M_{2k}$, namely $\mathbb{E}\text{Tr}M_{2k}^q$. It was already shown in \cite{BePe} that, when $q$ is independent of $n_0$, the main contribution comes from the cactus graphs whose $q$ fundamental cycles are simple i.e. have length 2 only. Blue edges are then matched tipically inside these simple cycles, independently of the two niches. 
 The contribution of other admissible graphs is then negligible in the large $n_0$-limit. 
 The above statements are equivalent to the fact that the limiting empirical eigenvalue distribution is the Marchenko--Pastur distribution with support 
 $$( {\bf{u_-}, \bf{u_+}})=\theta_1(P_{2k}) (u_-, u_+)\quad\text{where}\quad  u_{\pm}=\left (1\pm \sqrt \gamma \right )^2\quad \text{with}\quad \gamma=\frac{\phi}{\psi}.$$
 The number of double trees of simple cycles of total length $2q$ with $l$ distinct $j$-indices is given by the Narayana number
$$\frac{1}{q}\binom{q}{l}\binom{q}{l-1}, \:l \geq 1.$$
Thus, when $q$ is a fixed integer (independent of $n_0$), one has that
$$\frac{1}{n_1}\mathbb{E}\text{Tr}M_{2k}^q= \sum_{l\geq 1}\frac{1}{q}\binom{q}{l}\binom{q}{l-1}\frac{1}{\gamma^{q-l}}\theta_1(P_{2k})^q(1+o(1)),$$
which means that the trees of simple cycles are the only contributing graphs, as shown in \cite{BePe}.

The core of the proof of Theorem \ref{theo:sep_easy} in the case where $f$ is an even polynomial is the following proposition.
Define \begin{equation}\kappa_x=\kurt(X)-1,\quad \kappa_w=\kurt(W)-1,\quad\text{and}\quad \kappa=\max(\kurt(W)-1, \kurt (X)-1).\label{kappa}\end{equation}
\begin{proposition}\label{prop:even} Let $\alpha>0$, $k$ be a fixed integer independent of $n_0$ and $q\leqslant (\log n_1)^{1+\alpha_1}$. Assume that $P_{2k}(x)=x^{2k}-(2k)!!$ then 
\[
\mathbb{E}\Tr M_{2k}^q= \mathbb{E}\Tr\left (\frac{1}{m}\left(\tilde{X}+\sqrt{\frac{\theta_3\kappa}{n_0}} J\right)\left(\tilde{X}+\sqrt{\frac{\theta_3\kappa}{n_0}} J)\right)^\top\right )^q(1+o(1))
\]
where $\tilde{X}$ is a $n_1 \times m$ matrix with i.i.d. $\mathcal{N}(0,1)$ entries.
\end{proposition}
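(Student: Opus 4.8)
Since $q=q(n_1)\to\infty$, both $\mathbb{E}\Tr M_{2k}^q$ and the $q$-th moment on the right-hand side of the stated identity are dominated by the extreme part of their spectrum, so it suffices to show that the two quantities admit the same leading-order graphical expansion. Starting from
\[
\mathbb{E}\Tr M_{2k}^q=\frac{1}{m^q}\sum_{i_1,\dots,i_q=1}^{n_1}\ \sum_{j_1,\dots,j_q=1}^{m}\mathbb{E}\prod_{r=1}^q (Y_{2k})_{i_rj_r}(Y_{2k})_{i_{r+1}j_r},
\]
I would expand each entry of $Y_{2k}$ into its monomials in the entries of $W$ and $X$ and organise the sum over coincidence graphs decorated by blue niches, exactly as recalled above and in \cite{BePe}. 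Because $k$ is fixed and $q\leqslant(\log n_1)^{1+\alpha_1}$ we have $q^{2k}\ll n_0$, so the estimates of Section~3 of \cite{BePe} used in the proof of Proposition~\ref{prop:highmoment} apply and show that \emph{non-admissible} red graphs contribute only $o(1)$ times the admissible ones, uniformly in $q$; one is left with admissible (cactus) red graphs.

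\paragraph{Which admissible graphs survive when $\theta_2=0$.}
On an admissible cactus each fundamental red cycle of length $2\ell$ carries $2\ell\cdot 2k$ blue half-edges which must be matched without singletons. Since $f$ is even, $\theta_2(P_{2k})=0$, so the ``standard'' matching of a long red cycle — one long blue cycle threading a single half-edge through every niche, the remaining half-edges paired inside each niche — has weight $\theta_2(P_{2k})^{\ell}=0$ and drops out. I expect exactly two families to remain. First, cacti all of whose fundamental cycles are simple (length $2$), with blue half-edges paired arbitrarily inside each digon: each simple cycle contributes $\theta_1(P_{2k})$ and, by the Narayana count recalled above, the sum over such cacti reproduces the graphical expansion of $\mathbb{E}\Tr(\tfrac1m\tilde X\tilde X^\top)^q$ (in the normalisation $\theta_1(P_{2k})=1$, which one may assume by rescaling $f$). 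Second, cacti with exactly one \emph{long} fundamental cycle of length $2L$ with $L\geqslant 2$ and all other cycles simple, in which the $2L$ niches of the long cycle are matched in the one special way that is not killed by $\theta_2=0$: this matching forces a fourth-order coincidence of the $W$-entries at every $i$-vertex of the long cycle (or of the $X$-entries at every $j$-vertex), and carrying out the Gaussian integration by parts, which now falls on $f''$ along that cycle, the configuration carries weight $\theta_3(P_{2k})^{L}\bigl(\kappa_w^{\,L}+\kappa_x^{\,L}\bigr)$ for that cycle. One then checks, as in \cite{BePe}, that cacti with two or more long cycles, or with a long cycle matched in any other way, lose an extra power of $n_0^{-1}$ (or vanish), hence contribute $o(1)$ uniformly in $q$.

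\paragraph{Matching the target model.}
I would then run the analogous Wick (Isserlis) expansion for $\tfrac1m(\tilde X+\varepsilon J)(\tilde X+\varepsilon J)^\top$ with $\varepsilon\coloneqq\sqrt{\theta_3\kappa/n_0}$ and $J=\mathbf 1\mathbf 1^\top$: each factor along the trace cycle is either a $\tilde X$-entry, paired by Wick, or a deterministic $\varepsilon J$-entry equal to $\varepsilon$, carrying no pairing constraint and a free internal index sum. The $\tilde X$-only terms give the Marchenko--Pastur part and match the first family; maximal runs of $\varepsilon J$-factors (necessarily of even length, with at most one relevant run) reproduce the long-cycle structure of the second family, with matching weight $(\theta_3\kappa)^L$ and matching powers of $\gamma=\phi/\psi$. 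It remains to pass from $\theta_3^{L}(\kappa_w^L+\kappa_x^L)$ to $\theta_3^{L}\kappa^L$ with $\kappa=\max(\kappa_w,\kappa_x)$: if $\kappa_w\neq\kappa_x$ the long cycles carrying the smaller kurtosis are negligible next to those carrying the larger one once $L$ is large, and since $q\to\infty$ the contribution that matters — in the regime where the long cycles matter at all — is concentrated on large $L$ (in the complementary regime the long-cycle terms are themselves $o(1)$ beside the Marchenko--Pastur part); when $\kappa_w=\kappa_x$ the remaining bounded prefactor is absorbed into the $(1+o(1))$. Assembling the pieces gives $\mathbb{E}\Tr M_{2k}^q=\mathbb{E}\Tr\bigl(\tfrac1m(\tilde X+\varepsilon J)(\tilde X+\varepsilon J)^\top\bigr)^q(1+o(1))$.

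\paragraph{Main obstacle.}
The delicate step is the second one: isolating, among all blue matchings of a long red cycle when $\theta_2=0$, exactly the matchings that are not killed, computing their weight as $\theta_3^{L}(\kappa_w^L+\kappa_x^L)$ through the integration-by-parts bookkeeping, and proving that everything else — other long-cycle matchings, configurations with several long cycles, and the non-admissible red graphs — is genuinely of lower order \emph{uniformly} for $q$ up to $(\log n_1)^{1+\alpha_1}$, so that the growth of $q$ together with the number of cycles and the fixed degree $2k$ does not spoil the error bounds.
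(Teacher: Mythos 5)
Your outline follows essentially the same route as the paper: expand $\mathbb{E}\Tr M_{2k}^q$ over decorated coincidence graphs, discard non-admissible red graphs, observe that the centering kills perfect matchings confined to a niche so that the surviving admissible configurations are (i) trees of simple cycles giving the Marchenko--Pastur part and (ii) cacti with one long cycle whose niches carry an internal perfect matching plus bridges, each long cycle of length $2L$ weighted $\theta_3(P_{2k})^{L}\bigl(\kappa_w^{L}+\kappa_x^{L}\bigr)$, and finally compare with the moment expansion of the information-plus-noise matrix (your direct Wick matching of the target model is a perfectly acceptable substitute for the paper's encoding into the path combinatorics of F\'eral--P\'ech\'e/Soshnikov). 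The treatment of $\kappa=\max(\kappa_w,\kappa_x)$ versus $\kappa_w^{L}+\kappa_x^{L}$ is also handled in the same spirit as in the paper.

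There is, however, a genuine gap at exactly the point you defer to the ``main obstacle'': the claim that cacti with two or more long cycles ``lose an extra power of $n_0^{-1}$ \dots hence contribute $o(1)$ uniformly in $q$'', justified ``as in \cite{BePe}''. This does not follow from \cite{BePe}, where $q$ is fixed and, for $\theta_2=0$, long cycles are negligible altogether; here the single long cycle is precisely the new leading term, and the extra factor $n_0^{-1}$ per additional long cycle must be weighed against the fact that there may be many more admissible graphs with $N$ long cycles than with $N-1$ at the same parameters. What is needed, and what the paper supplies, is an entropy bound of the form $\mathcal{A}(q,I_i,I_j+1,I_i+I_j)+\mathcal{A}(q,I_i+1,I_j,I_i+I_j)\leqslant Cq^{4}\,\mathcal{A}(q,I_i,I_j,I_i+I_j)$ (and its iterate for general $N$, with a $Cq^{5}$ cost per step), obtained by an explicit surgery: pick two vertices at even distance on a long cycle, identify them to split it into two long cycles, and re-glue the attached trees, counting the $O(q^{4})$--$O(q^{5})$ choices this consumes. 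Combined with the weight loss $\psi/n_0$ per extra long cycle this gives a relative error $C\theta_3\kappa q^{6}/(\theta_1 n_0)=o(1)$ for $q\leqslant(\log n_1)^{1+\alpha_1}$, uniformly after summing over $N$. Without this counting argument (or an equivalent one), the uniform-in-$q$ negligibility of multi-long-cycle configurations — and hence the asserted matching of moments — is not established, so your proposal as written leaves the decisive step of the proof open.
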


\begin{proof}[Proof of Proposition \ref{prop:even}:]

We recall that, for simplicity, we set $\sigma_w=\sigma_x=1$ and that
\begin{multline}\label{eq:entryy}
Y_{ij}=\left(\frac{WX}{\sqrt{n_0}}\right)^{2k}_{ij}-(2k)!!=\frac{1}{n_0^{k}}\left(
	\sum_{\ell=1}^{n_0}W_{ik}X_{kj}
\right)^{2k}-2k!!
\\
=
\frac{1}{n_0^{k}}\sum_{\ell_1,\dots\ell_{2k}=1}^{n_0}\prod_{p=1}^kW_{i\ell_p}X_{\ell_p j}-\#\left\{\text{perfect matchings of }\unn{1}{2k}\right\}.
\end{multline} 

We also recall that the $\ell$-indices have to be matched inside niches first: this is due to the fact that the $W$ and $X$ entries are centered. We denote by $\mathrm{PM_{2k}}$ the perfect matchings of $\unn{1}{2k}$ and $\mathrm{NPM}_{2k}$ those matchings which are not perfect matchings. Then the above can be rewritten as 
\begin{equation} (\ref{eq:entryy})=
\frac{1}{n_0^{k}}\sum_{\mathrm{PM}_{2k}}\sum_{\ell_1,\dots\ell_{k}=1^* }^{n_0}\left(\prod_{p=1}^kW_{i\ell_p}^2X_{\ell_p j}^2-1\right) + \frac{1}{n_0^{k}}\sum_{\mathrm{NPM}_{2k}}\sum_{\ell_1,\dots\ell_{2k}=1^* }^{n_0}\prod_{p=1}^kW_{i\ell_p}X_{\ell_p j}.
\end{equation}

In the above formula, the first starred sum bears on pairwise distinct $\ell$-indices. The second starred sum bears on a certain number of pairwise distinct $\ell$-indices which depends on the matching of the $2k$ integers defined previously.

Consider now the expected value of 
\begin{equation}\label{eq:moment0}
\frac{1}{n_1}\mathds{E}\left[
	\Tr M_{2k}^q
\right]
=
\frac{1}{n_1m^q}\mathds{E}\left[
	\Tr \left(YY^\top\right)^q
\right]
=
\frac{1}{n_1m^q}\mathds{E}\sum_{i_1,\dots,i_q=1}^{n_1}\sum_{j_1,\dots,j_q=1}^mY_{i_1j_1}Y_{i_2j_1}Y_{i_2j_2}Y_{i_3j_2}\dots Y_{i_qj_q}Y_{i_1j_q}.
\end{equation}

Assume that there exists at least one niche where the $\ell$-indices are matched according to a perfect matching and that this niche is not connected to another by a bridge. Then the corresponding contribution to the expected value vanishes. As a result, we can now restrict to matchings which are either different from a perfect matching or such that matchings inside niches are connected to another through a bridge. 

\vspace{\baselineskip}
\begin{minipage}{.45\linewidth}
	\begin{center}
	\begin{tikzpicture}
		\node[fill=BrickRed, inner sep = 0pt, minimum size=.27cm] (A) at (-1.5,0) {};
		\node[fill=BrickRed, inner sep=0pt, minimum size=.27cm] (B) at (0,1.5) {};
		\draw[-,BrickRed, line width = .12em] (A) arc[start angle=180, end angle = 90, radius=1.5cm];
		\foreach \a in {1,2,3,4}{
		\node[fill=RoyalBlue, inner sep = 0pt, minimum size=.12cm] (\a+1) at (360/4+\a*360/20: 1.5cm) {};		
		}

		\draw[-, RoyalBlue, line width=.12em] (1+1) edge[bend left=60] (3+1);
		\draw[-, RoyalBlue, line width=.12em] (2+1) edge[bend right=60] (4+1);
	\end{tikzpicture}
	\end{center}
\end{minipage}
\begin{minipage}{.45\linewidth}
	Such a perfect matching in any niche is forbidden by the centering and thus does not contribute to the moment.
\end{minipage}
\vspace{\baselineskip}

As perfect matchings are forbidden, and in order to maximize the number of pairwise distinct indices, there are two possible cases. Either some blue vertices are matched with vertices from other niches and there
 exist more than one blue cycle linking the niches.
Or there are additional matchings between adjacent niches, that we call \emph{bridges}. Note that this holds true whether the graph is admissible or not.

We first consider the contribution of admissible graphs. 
Because $q$ grows to infinity but not too fast, some typical graphs are the same as in the case where $q$ is independent of $n_0$, i.e. trees of simple cycles. 
Consider now typical graphs $G$ with $q$ fundamental cycles of length $2$ and where the $\ell$-indices are matched according to a perfect matching that crosses the niches (i.e. with extra niches matchings). Their total contribution to $\mathbb{E}\text{Tr}M_{2k}^q$ is in the order of $n_1\theta_1^q u_+^{q}.$ This is in agreement with the fact that the limiting e.e.d. is the Marchenko--Pastur distribution.

\vspace{\baselineskip}
\begin{minipage}{.45\linewidth}
	\centering
	\begin{tikzpicture}
		\node[fill=BrickRed, inner sep=0pt, minimum size = .27cm] (1) at (0,0) {};
		\node[fill=BrickRed, inner sep=0pt, minimum size = .27cm] (2) at (2.5,0) {};
		\draw[-, BrickRed, line width=.12em] (1) edge[bend left]  node[draw=black,line width=.05em,pos=.20, fill=RoyalBlue, inner sep = 0pt, minimum size = .12cm] (121) {} node[draw=black,line width=.05em,pos=.4, fill=RoyalBlue, inner sep = 0pt, minimum size = .12cm] (122) {} node[draw=black,line width=.05em,pos=.60, fill=RoyalBlue, inner sep = 0pt, minimum size = .12cm] (123) {} node[draw=black,line width=.05em,pos=.80, fill=RoyalBlue, inner sep = 0pt, minimum size = .12cm] (124) {} (2) ;
		
		\draw[-, BrickRed, line width=.12em] (1) edge[bend right]  node[draw=black,line width=.05em,pos=.20, fill=RoyalBlue, inner sep = 0pt, minimum size = .12cm] (125) {} node[draw=black,line width=.05em,pos=.4, fill=RoyalBlue, inner sep = 0pt, minimum size = .12cm] (126) {} node[draw=black,line width=.05em,pos=.60, fill=RoyalBlue, inner sep = 0pt, minimum size = .12cm] (127) {} node[draw=black,line width=.05em,pos=.80, fill=RoyalBlue, inner sep = 0pt, minimum size = .12cm] (128) {} (2) ;
		
		\draw[-, RoyalBlue, line width=.12em] (121) edge (128);
		\draw[-, RoyalBlue, line width=.12em] (122) edge[bend left=60] (124);
		\draw[-, RoyalBlue, line width=.12em] (123) edge (125);
		\draw[-, RoyalBlue, line width=.12em] (126) edge[bend right=80] (127);
	\end{tikzpicture}
\end{minipage}
\begin{minipage}{.45\linewidth}
	In a cycle of length 2, we perform a perfect matching between the $4k$ blue vertices while preventing a perfect matching in each niche. At least one blue matching (and thus two) must cross niches.
\end{minipage}

\vspace{\baselineskip}
However, because $q$ grows to infinity there may exist other typical graphs. We are going to show that admissible graphs with a single cycle of length $2q'>4$ with attached trees of simple cycles may lead to a non negligible contribution.

Firstly we consider the contribution of graphs where, for some long cycle, (at least) two long blue cycles match the $\ell$-indices. 
Consider a long cycle, first of length $2q'\geqslant 4$.  Such a cycle has been obtained by choosing $q'$ adjacent simple cycles 
from a tree of simple cycles. One simply opens the cycles. Because there are (at least) two long blue cycles,  one can check that the total number of distinct vertices is then in the order of $n_0^{-q'+1}$ that corresponding to the $q'$ simple cycles. 

\vspace{\baselineskip}
\begin{minipage}{.45\linewidth}
	\centering
		\begin{tikzpicture}
			\draw[-,BrickRed,line width = .12em] (0,0) circle (1cm);
			\node[fill=BrickRed,inner sep=0pt, minimum size=.27cm] (2) at (1,0) {};
			\node[fill=BrickRed,inner sep=0pt, minimum size=.27cm] at (-1,0) {};
			\node[fill=BrickRed,inner sep=0pt, minimum size=.27cm] (1) at (0,1) {};
			\node[fill=BrickRed,inner sep=0pt, minimum size=.27cm] at (0,-1) {};
			\foreach \a in {1,2,...,19}{
				\ifthenelse{\a=5 \OR \a=10 \OR \a=15}{}{\node[fill=RoyalBlue, inner sep = 0pt, minimum size=.12cm] (\a+1) at (360/4+\a*360/20: 1cm) {};}
			}
			
			\draw[-, RoyalBlue, line width=.12em] (1+1) edge[bend left=20]  (7+1);
			\draw[-, RoyalBlue, line width=.12em] (7+1) edge[bend left=20]  (13+1);
			\draw[-, RoyalBlue, line width=.12em] (13+1) edge[bend left=20]  (16+1);
			\draw[-, RoyalBlue, line width=.12em] (16+1) edge[bend left=20]  (1+1);
			
			\draw[-, RoyalBlue, line width=.12em] (2+1) edge[bend left=20]  (6+1);
			\draw[-, RoyalBlue, line width=.12em] (6+1) edge[bend left=20]  (11+1);
			\draw[-, RoyalBlue, line width=.12em] (11+1) edge[bend left=20]  (17+1);
			\draw[-, RoyalBlue, line width=.12em] (17+1) edge[bend left=20]  (2+1);
			
			\draw[-, RoyalBlue, line width=.12em] (3+1) edge[bend left=80]  (4+1);
			\draw[-, RoyalBlue, line width=.12em] (8+1) edge[bend left=80]  (9+1);
			\draw[-, RoyalBlue, line width=.12em] (12+1) edge[bend right=80]  (14+1);
			\draw[-, RoyalBlue, line width=.12em] (18+1) edge[bend left=80]  (19+1);
	\end{tikzpicture}
\end{minipage}
\begin{minipage}{.45\linewidth}
	At least two blue cycles link each niche together. In particular, not a single niche consists in just a perfect matching. This contributes $(4\theta_3(f))^{q}$ with $2q$ the length of the cycle.
\end{minipage}

\vspace{\baselineskip}
One can then write the contribution of such matched graphs to $\mathbb{E}\text{Tr}M_{2k}^q$ as 

\begin{equation}\label{err}
\sum_{I_i,I_j=0}^q\sum_{b=0}^{I_i+I_j}{\mathcal{A}}(q,I_i,I_j,b)(4\theta_3)^{q-b}\theta_1^{b}\psi^{I_i+1-q}\phi^{I_j}n_0^{b+c-(q-1)},
\end{equation}
where $c=I_i+I_j+1-b$ denotes the number of long cycles. 
It is not difficult to check that the above sum does not exceed $\sum_l \binom{q}{l}n_0^{-(l-1)}$ times the contribution of trees of simple cycles. 
As a consequence this is at most in the order of $q/n_0$ times the contribution of trees of simple cycles. 
Combining the whole, one can check that the contribution of admissible cycles with one long cycle and two long blue cycles is negligible in the large $n_0$ limit. 


\vspace{\baselineskip}
Now we need to consider the contribution from admissible graphs which have bridges between adjacent niches. A bridge is an identification of two $\ell$-indices from two adjacent niches. Note that the identification can be made around an $i$ or a $j$ index. Starting from a perfect matching inside niches,  such an identification gives rise to the occurence of a fourth moment either of a $W$ entry (identification around an $i$-index) or an $X$ entry (similarly around a $j$-index).
For other matchings, note that this may give rise to higher order moments.

We now consider the contribution of the following graphs which we prove are typical too: consider an admissible graph with $b$ cycles of length $2$.
For such graphs, we first consider the following matchings illustrated in Figure \ref{fig:matchadmisseven}:
\begin{itemize}
\item In any fundamental cycle of lenth $2q_1\geq 4$ one performs a perfect matching inside each niche and adds  $q_1$ bridges between pairs of adjacent niches so that no niche is disconnected from its neighbors. Thus for a cycle of length $2q_1$ one has to choose $1$ matching inside each niche and then decide once around which kind of vertex the bridges are built. In the whole there are $k^{2q_1}(2k)!!^{2q_1}=(k(2k-1)(wk-2)!!)^{2q_1}=\theta_3(P_{2k})$ such choices and matchings. By construction, the corresponding moment is for each bridge around an $i$-index
$$\mathbb{E} (W_{i\ell}^2X_{\ell j}^2-1)(W_{i\ell}^2X_{\ell j'}^2-1)=\kurt(W)-1=\kappa_w
$$
or $$\mathbb{E} (W_{i\ell}^2X_{\ell j}^2-1)(W_{i'\ell}^2X_{\ell j}^2-1)=\kurt(X)-1=\kappa_x,
$$
 around a $j$-index.

\item In any cycle of length $2$ one performs a perfect matching in such a way that there is at least one matching from one niche to the other. 
\end{itemize}

\begin{figure}
 	\centering
 		\begin{tikzpicture}
		\draw[-,BrickRed,line width = .12em] (0,0) circle (1cm);
		\draw[-,BrickRed,line width = .12em] (2,0) circle (1cm);
		\node[fill=BrickRed, inner sep = 0pt, minimum size=.27cm] (2) at (1,0) {};
		\node[fill=BrickRed, inner sep = 0pt, minimum size=.27cm] at (-1,0) {};
		\node[fill=BrickRed, inner sep = 0pt, minimum size=.27cm] (1) at (0,1) {};
		\node[fill=BrickRed, inner sep = 0pt, minimum size=.27cm] at (0,-1) {};
		\node[fill=BrickRed, inner sep = 0pt, minimum size=.27cm] (6) at (0,2) {};
		\node[fill=BrickRed, inner sep = 0pt, minimum size=.27cm] at (2,1) {};
		\node[fill=BrickRed, inner sep = 0pt, minimum size=.27cm] (5) at (2,-1) {};
		\node[fill=BrickRed, inner sep = 0pt, minimum size=.27cm] (3) at (3,0) {};
		\node[fill=BrickRed, inner sep = 0pt, minimum size=.27cm] (9) at (2,-2) {};
		\node[fill=BrickRed, inner sep = 0pt, minimum size=.27cm] (4) at (4,1) {};
		\node[fill=BrickRed, inner sep = 0pt, minimum size=.27cm] (8) at (4,-1) {};
		\node[fill=BrickRed, inner sep = 0pt, minimum size=.27cm] (7) at (4,2) {};
		
		\draw[-,BrickRed, line width=.12em] (1) edge[bend left] (6); 
		\draw[-,BrickRed, line width=.12em] (1) edge[bend right] (6); 
		
		\draw[-,BrickRed, line width=.12em] (5) edge[bend left] (9);
		\draw[-,BrickRed, line width=.12em] (5) edge[bend right] (9); 
		
		\draw[-,BrickRed, line width=.12em] (3) edge[bend left] (4); 
		\draw[-,BrickRed, line width=.12em] (3) edge[bend right] (4); 
		
		\draw[-,BrickRed, line width=.12em] (3) edge[bend left] (8); 
		\draw[-,BrickRed, line width=.12em] (3) edge[bend right] (8); 
		
		\draw[-,BrickRed, line width=.12em] (4) edge[bend left] (7); 
		\draw[-,BrickRed, line width=.12em] (4) edge[bend right] (7); 
		\draw[-, BrickRed, line width=.12em] (1) edge[bend right]  node[draw=black,line width=.05em,pos=.20, fill=RoyalBlue, inner sep = 0pt, minimum size = .12cm] (161) {} node[draw=black,line width=.05em,pos=.4, fill=RoyalBlue, inner sep = 0pt, minimum size = .12cm] (162) {} node[draw=black,line width=.05em,pos=.60, fill=RoyalBlue, inner sep = 0pt, minimum size = .12cm] (163) {} node[draw=black,line width=.05em,pos=.80, fill=RoyalBlue, inner sep = 0pt, minimum size = .12cm] (164) {} (6) ;
		\draw[-, BrickRed, line width=.12em] (1) edge[bend left]  node[draw=black,line width=.05em,pos=.20, fill=RoyalBlue, inner sep = 0pt, minimum size = .12cm] (165) {} node[draw=black,line width=.05em,pos=.4, fill=RoyalBlue, inner sep = 0pt, minimum size = .12cm] (166) {} node[draw=black,line width=.05em,pos=.60, fill=RoyalBlue, inner sep = 0pt, minimum size = .12cm] (167) {} node[draw=black,line width=.05em,pos=.80, fill=RoyalBlue, inner sep = 0pt, minimum size = .12cm] (168) {} (6) ;
		
		\draw[-, BrickRed, line width=.12em] (5) edge[bend right]  node[draw=black,line width=.05em,pos=.20, fill=RoyalBlue, inner sep = 0pt, minimum size = .12cm] (591) {} node[draw=black,line width=.05em,pos=.4, fill=RoyalBlue, inner sep = 0pt, minimum size = .12cm] (592) {} node[draw=black,line width=.05em,pos=.60, fill=RoyalBlue, inner sep = 0pt, minimum size = .12cm] (593) {} node[draw=black,line width=.05em,pos=.80, fill=RoyalBlue, inner sep = 0pt, minimum size = .12cm] (594) {} (9) ;
		\draw[-, BrickRed, line width=.12em] (5) edge[bend left]  node[draw=black,line width=.05em,pos=.20, fill=RoyalBlue, inner sep = 0pt, minimum size = .12cm] (595) {} node[draw=black,line width=.05em,pos=.4, fill=RoyalBlue, inner sep = 0pt, minimum size = .12cm] (596) {} node[draw=black,line width=.05em,pos=.60, fill=RoyalBlue, inner sep = 0pt, minimum size = .12cm] (597) {} node[draw=black,line width=.05em,pos=.80, fill=RoyalBlue, inner sep = 0pt, minimum size = .12cm] (598) {} (9) ;
		
		\draw[-, BrickRed, line width=.12em] (3) edge[bend right]  node[draw=black,line width=.05em,pos=.20, fill=RoyalBlue, inner sep = 0pt, minimum size = .12cm] (341) {} node[draw=black,line width=.05em,pos=.4, fill=RoyalBlue, inner sep = 0pt, minimum size = .12cm] (342) {} node[draw=black,line width=.05em,pos=.60, fill=RoyalBlue, inner sep = 0pt, minimum size = .12cm] (343) {} node[draw=black,line width=.05em,pos=.80, fill=RoyalBlue, inner sep = 0pt, minimum size = .12cm] (344) {} (4) ;
		\draw[-, BrickRed, line width=.12em] (3) edge[bend left]  node[draw=black,line width=.05em,pos=.20, fill=RoyalBlue, inner sep = 0pt, minimum size = .12cm] (345) {} node[draw=black,line width=.05em,pos=.4, fill=RoyalBlue, inner sep = 0pt, minimum size = .12cm] (346) {} node[draw=black,line width=.05em,pos=.60, fill=RoyalBlue, inner sep = 0pt, minimum size = .12cm] (347) {} node[draw=black,line width=.05em,pos=.80, fill=RoyalBlue, inner sep = 0pt, minimum size = .12cm] (348) {} (4) ;
		
		\draw[-, BrickRed, line width=.12em] (3) edge[bend right]  node[draw=black,line width=.05em,pos=.20, fill=RoyalBlue, inner sep = 0pt, minimum size = .12cm] (381) {} node[draw=black,line width=.05em,pos=.4, fill=RoyalBlue, inner sep = 0pt, minimum size = .12cm] (382) {} node[draw=black,line width=.05em,pos=.60, fill=RoyalBlue, inner sep = 0pt, minimum size = .12cm] (383) {} node[draw=black,line width=.05em,pos=.80, fill=RoyalBlue, inner sep = 0pt, minimum size = .12cm] (384) {} (8) ;
		\draw[-, BrickRed, line width=.12em] (3) edge[bend left]  node[draw=black,line width=.05em,pos=.20, fill=RoyalBlue, inner sep = 0pt, minimum size = .12cm] (385) {} node[draw=black,line width=.05em,pos=.4, fill=RoyalBlue, inner sep = 0pt, minimum size = .12cm] (386) {} node[draw=black,line width=.05em,pos=.60, fill=RoyalBlue, inner sep = 0pt, minimum size = .12cm] (387) {} node[draw=black,line width=.05em,pos=.80, fill=RoyalBlue, inner sep = 0pt, minimum size = .12cm] (388) {} (8) ;

		\draw[-, BrickRed, line width=.12em] (4) edge[bend right]  node[draw=black,line width=.05em,pos=.20, fill=RoyalBlue, inner sep = 0pt, minimum size = .12cm] (471) {} node[draw=black,line width=.05em,pos=.4, fill=RoyalBlue, inner sep = 0pt, minimum size = .12cm] (472) {} node[draw=black,line width=.05em,pos=.60, fill=RoyalBlue, inner sep = 0pt, minimum size = .12cm] (473) {} node[draw=black,line width=.05em,pos=.80, fill=RoyalBlue, inner sep = 0pt, minimum size = .12cm] (474) {} (7) ;
		\draw[-, BrickRed, line width=.12em] (4) edge[bend left]  node[draw=black,line width=.05em,pos=.20, fill=RoyalBlue, inner sep = 0pt, minimum size = .12cm] (475) {} node[draw=black,line width=.05em,pos=.4, fill=RoyalBlue, inner sep = 0pt, minimum size = .12cm] (476) {} node[draw=black,line width=.05em,pos=.60, fill=RoyalBlue, inner sep = 0pt, minimum size = .12cm] (477) {} node[draw=black,line width=.05em,pos=.80, fill=RoyalBlue, inner sep = 0pt, minimum size = .12cm] (478) {} (7) ;
		
		\foreach \a in {1,2,...,19}{
		\ifthenelse{\a=5 \OR \a=10 \OR \a=15}{}{\node[fill=RoyalBlue, inner sep = 0pt, minimum size=.12cm] (\a+2) at (360/4+\a*360/20: 1cm) {};}
		}
		
		\begin{scope}[xshift=2cm]
		\foreach \a in {1,2,...,19}{
		\ifthenelse{\a=5 \OR \a=10 \OR \a=15}{}{\node[fill=RoyalBlue, inner sep = 0pt, minimum size=.12cm] (\a+1) at (360/4+\a*360/20: 1cm) {};}
		}
		\end{scope}
	
		\draw[-, RoyalBlue, line width=.12em] (161) edge  (165);
		\draw[-, RoyalBlue, line width=.12em] (162) edge  (166);
		\draw[-, RoyalBlue, line width=.12em] (163) edge  (168);
		\draw[-, RoyalBlue, line width=.12em] (164) edge  (167);
		
		\draw[-, RoyalBlue, line width=.12em] (591) edge[bend right=80]  (592);
		\draw[-, RoyalBlue, line width=.12em] (593) edge  (595);
		\draw[-, RoyalBlue, line width=.12em] (596) edge [bend left=80] (597);
		\draw[-, RoyalBlue, line width=.12em] (592) edge  (598);

		\draw[-, RoyalBlue, line width=.12em] (381) edge  (385);
		\draw[-, RoyalBlue, line width=.12em] (384) edge (386);
		\draw[-, RoyalBlue, line width=.12em] (382) edge[bend right=80]  (383);
		\draw[-, RoyalBlue, line width=.12em] (387) edge[bend left=80]  (388);

		\draw[-, RoyalBlue, line width=.12em] (341) edge[bend left=60]  (344);
		\draw[-, RoyalBlue, line width=.12em] (342) edge[bend right=60]  (343);
		\draw[-, RoyalBlue, line width=.12em] (345) edge[bend left=80]  (346);
		\draw[-, RoyalBlue, line width=.12em] (347) edge[bend left=80]  (348);

		\draw[-, RoyalBlue, line width=.12em] (471) edge  (477);
		\draw[-, RoyalBlue, line width=.12em] (472) edge  (475);
		\draw[-, RoyalBlue, line width=.12em] (473) edge  (476);
		\draw[-, RoyalBlue, line width=.12em] (474) edge  (478);

		\draw[-, RoyalBlue, line width=.12em] (1+2) edge[bend left=80] node[draw=none, midway, inner sep=0pt, minimum size=.12cm] (a) {} (2+2);
		\draw[-, RoyalBlue, line width=.12em] (3+2) edge[bend left=80] (4+2);
		
		\draw[-, RoyalBlue, line width=.12em] (6+2) edge[bend left=60] node[draw=none, midway, inner sep=0pt, minimum size=.12cm] (b) {} (8+2);
		\draw[-, RoyalBlue, line width=.12em] (7+2) edge[bend right=60]  (9+2);
		\draw[-, RoyalBlue, line width=.12em] (11+2) edge[bend left=80]  (12+2);
		\draw[-, RoyalBlue, line width=.12em] (13+2) edge[bend left=80] node[draw=none, midway, inner sep=0pt, minimum size=.12cm] (c) {}  (14+2);
		
		\draw[-, RoyalBlue, line width=.12em] (16+2) edge[bend left=60] node[draw=none, midway, inner sep=0pt, minimum size=.12cm] (d) {} (19+2);
		\draw[-, RoyalBlue, line width=.12em] (17+2) edge[bend left=60]  (18+2);
		\draw[-, RoyalBlue, line width=.12em] (1+2) edge[bend left=30]  (8+2);
		\draw[-, RoyalBlue, line width=.12em] (2+2) edge[bend left=60]  (6+2);
		
		\draw[-, RoyalBlue, line width=.12em] (11+2) edge[bend left=20]  (19+2);
		\draw[-, RoyalBlue, line width=.12em] (12+2) edge[bend left=60]  (16+2);
		
		\draw[-, RoyalBlue, line width=.12em] (6+1) edge[bend left=80] node[draw=none, midway, inner sep=0pt, minimum size=.12cm] (aa) {} (7+1);
		\draw[-, RoyalBlue, line width=.12em] (8+1) edge[bend left=80] (9+1);
		
		\draw[-, RoyalBlue, line width=.12em] (16+1) edge[bend left=60] node[draw=none, midway, inner sep=0pt, minimum size=.12cm] (bb) {} (18+1);
		\draw[-, RoyalBlue, line width=.12em] (17+1) edge[bend right=60]  (19+1);
		\draw[-, RoyalBlue, line width=.12em] (1+1) edge[bend left=80]  (2+1);
		\draw[-, RoyalBlue, line width=.12em] (3+1) edge[bend left=80] node[draw=none, midway, inner sep=0pt, minimum size=.12cm] (cc) {}  (4+1);
		
		\draw[-, RoyalBlue, line width=.12em] (11+1) edge[bend left=60] node[draw=none, midway, inner sep=0pt, minimum size=.12cm] (dd) {} (14+1);
		\draw[-, RoyalBlue, line width=.12em] (12+1) edge[bend left=60]  (13+1);
		\draw[-, RoyalBlue, line width=.12em] (4+1) edge[bend left=60]  (6+1);
		\draw[-, RoyalBlue, line width=.12em] (3+1) edge[bend left=60]  (7+1);
		\draw[-, RoyalBlue, line width=.12em] (11+1) edge[bend left=60]  (18+1);
		\draw[-, RoyalBlue, line width=.12em] (14+1) edge[bend left=60]  (16+1);

	\end{tikzpicture}
	\caption{Example of a leading matching for $k=4$. In a long cycle, we first perform a perfect matching in each niche and then add some additional identifications between each niche called a \emph{bridge}. In a cycle of length 2, we perform the usual matching. Each cycle of length 2 contributes a factor of $\theta_1(P_k)$ while a cycle of length $q>2$ contributes $n_1^{-1}\psi^{-q}\theta_3(P_k)^q$.}
	\label{fig:matchadmisseven}
\end{figure}
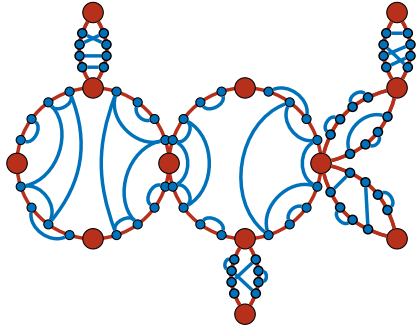
Let $\kappa$ be given by (\ref{kappa}). 
Combining the whole, the  contribution of any such graphs to $\mathbb{E}\text{Tr}M_{2k}^q$ is then at most

\begin{equation}
\frac{1}{m^q n_0^{2kq}} n_1^{q-I_i}m^{q-I_j}n_0^{(q-b)(2k-1)}n_0^{2kb} \theta_1^{b}\theta_3^{q-b} \kappa^{q-b}.
\end{equation}
Indeed for each long cycle, bridges are built around $i$-indices or $j$-indices but this can differ from one long cycle to another.

We now need to combine the contribution of all long cycles in admissible graphs. 
Let $N=I_i+I_j+1-b$ denote the number of long cycles (we recall that $I_i+I_j+1$ is the total number of cycles in an admissible graph). The vertices where cycles join play a distinct role: a bridge around such a vertex can be made between adjacent niches from two different cycles. This has no impact on the order of the number of pairwise distinct vertices provided these cycles are both long. It is negligible otherwise.
At each vertex where long cycles join, we have to determine the adjacent niches where the bridges are possibly built. There are at most $d_{\max}^2$ such choices where $d_{\max}$ is the maximal degree of such a vertex. Note that $d_{\max}<q$ and that if the graph has only one long cycle, $d_{\max}=1$. 
The total number of admissible graphs here is $\mathcal{A}(q, I_i,I_j,b).$
Thus, one gets that the contribution of admissible graphs to $\mathbb{E}\text{Tr}M_{2k}^q$ is
\begin{eqnarray}&& \mathbf{u}_+^q \theta_1^{q}n_1+\sum_{ I_i, I_j, b<q} \mathcal{A}(q, I_i,I_j,b)\frac{1}{m^q n_0^{2kq}} n_1^{q-I_i}m^{q-I_j}n_0^{2kq -(q-b)} \theta_1^{b}\theta_3^{q-b}d_{\max}^{2(N-1)}
  \kappa^{q-b}\O{1}\cr
&&=n_1\mathbf{u}_+^q \theta_1^q+\sum_{ I_i, I_j, b<q} \mathcal{A}(q, I_i,I_j,b) \psi^{-q+I_i} \phi^{I_j} \theta_1^{b}\theta_3^{q-b} n_0^{b-I_i-I_j}\kappa^{q-b}d_{\max}^{2(N-1)}\O{1} 
.\label{contribution}
\end{eqnarray}
The $\O{1}$ is due to the fact that among long cycles,  bridges can be made around $i$- or $j-$vertices. 
The case where $N=0$ corresponds to the case where the graph is a tree of simple cycles. This yields the first term in  (\ref{contribution}).
In the last line of (\ref{contribution}), we consider the contributions of those graphs such that $b=I_i+I_j+1-N$ for some $N>0.$ 
Now the case where $N=1$ yields the following contribution since $b=I_i+I_j$:
\begin{multline}(1+o(1))\sum_{ I_i, I_j} \mathcal{A}(q, I_i,I_j,I_i+I_j) \psi^{-I_j} \phi^{I_j} \theta_1^{I_i+I_j}\left(\frac{\theta_3\kappa}{\psi}\right)^{q-I_i-I_j}\\
=(1+o(1))\sum_{ I_i, I_j} \mathcal{A}(q, I_i,I_j,I_i+I_j)
\gamma^{I_j} \theta_1^{I_i+I_j}\left(\frac{\theta_3\kappa}{\psi}\right)^{q-I_i-I_j}.\label{(e)}
\end{multline}
Note that in this case the two contributions of bridges around $i$-indices and $j$-indices contribute. This is the reason for the $(1+o(1))$ correction term.

We are going to show that  the only graphs that indeed contribute for high moments $q\sim (\log n_1)^{1+\alpha_1}$ due to the power of $n_0$ are such that $N=1$  which means there is a unique cycle of length greater than 2. 
To this aim we relate their contribution to (\ref{contribution}) to another model of random matrices. 
\paragraph{Encoding an admissible graph with one or more long cycles}

First, one can see that examining the contribution of admissible graphs with some long cycles is equivalent to the following moment problem: evaluating the contribution to the expectation from edges which can be seen twice (and are in the fat trees) and contribute by $\theta_1$ to the expected value or odd edges seen only once in some long cycle  and contribute each by $\sqrt{\theta_3 \kappa/ \psi}$ at most. Such graphs also arise when one computes the moments of an information-plus-noise sample covariance matrix $M'=\frac{1}{m}(\tilde Z+\alpha J)(\tilde Z+\alpha J)^\top$, where we fix
$\alpha=\sqrt{\theta_3\kappa/ \psi n_1}$. Indeed, edges in this latter case arise either twice (or more) or a single time (for $J$ entries). It follows from  \cite{Soshnikov} or \cite{FePe}  that the typical graphs in $\mathbb{E}\text{Tr}(M')^q$ are exactly those for which edges arise once or twice but not more if $q\ll \sqrt{m}$.  Edges arising only once necessarily appear inside  long cycles and correspond to $J$ entries. This follows from the fact that when computing a trace, each graph contributing to the expectation comes from the simple cycle of length $2q$ on which one makes some identification of the vertices. It is also shown that edges appearing twice correspond to $\tilde Z$ entries typically.

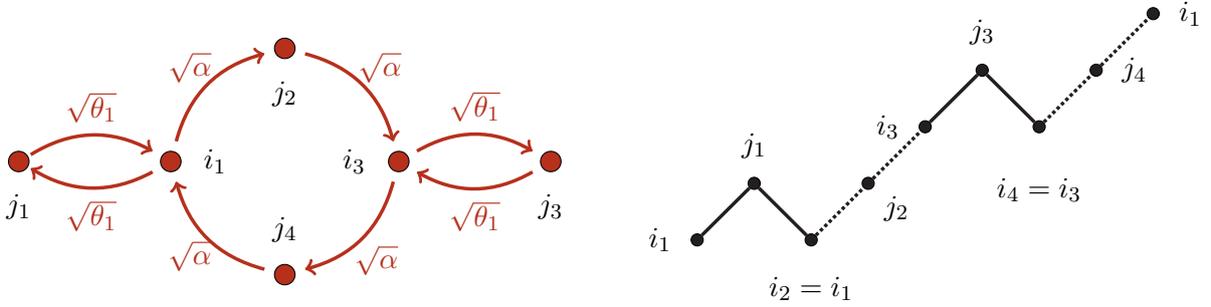
\begin{figure}[!ht]
%
%
	\begin{minipage}{.49\linewidth}
		\centering
		\begin{tikzpicture}
			\node[draw=none, inner sep= 0pt, minimum size=.35cm, label=below:$j_1$] (0+1) at (-3.5,0) {};
			\node[fill=BrickRed, inner sep= 0pt, minimum size=.27cm] (0) at (-3.5,0) {};
			\foreach \a in {1,2,3,4}{
				\node[draw=none, inner sep = 0pt, minimum size=.54cm] (\a+1) at (90+360/4*\a:1.5cm) {};
				\node[fill = BrickRed, inner sep=0pt, minimum size=.27cm] (\a) at (90+360/4*\a:1.5cm) {};
			}
			\node[draw=none, inner sep= 0pt, minimum size=.54cm] (5+1) at (3.5,0) {};
			\node[fill=BrickRed, inner sep= 0pt, minimum size=.27cm] (5) at (3.5,0) {};
			
			\node[draw=none, inner sep=0pt, minimum size=.35cm, label=right:$i_1$] at (-1.5,0) {};
			
			\node[draw=none, inner sep=0pt, minimum size=.35cm, label=below:$j_2$] at (0,1.5) {};
			
			\node[draw=none, inner sep=0pt, minimum size=.35cm, label=above:$j_4$] at (0,-1.5) {};
			
			\node[draw=none, inner sep=0pt, minimum size=.35cm, label=left:$i_3$] at (1.5,0) {};
			
			\node[draw=none, inner sep=0pt, minimum size=.35cm, label=below:$j_3$] at (3.5,0) {};
			
			\draw[->, BrickRed, line width=.12em] (0+1) edge[bend left=30] node[draw=none, midway, inner sep=0pt, minimum size=.05cm, label={[label distance = -.7em]above:\textcolor{BrickRed}{$\sqrt{\theta_1}$}}] {} (1+1);
			
			\draw[->, BrickRed, line width=.12em] (1+1) edge[bend left=30] node[draw=none, midway, inner sep=0pt, minimum size=.05cm, label={[label distance = -.7em]below:\textcolor{BrickRed}{$\sqrt{\theta_1}$}}] {} (0+1);
			
			\draw[->, BrickRed, line width=.12em] (1+1) edge[bend left] node[draw=none, midway, inner sep=0pt, minimum size=.05cm, label={[label distance = -.7em]135:\textcolor{BrickRed}{$\sqrt{\alpha}$}}] {} (4+1);
			
			\draw[->, BrickRed, line width=.12em] (4+1) edge[bend left] node[draw=none, midway, inner sep=0pt, minimum size=.05cm, label={[label distance = -.7em]45:\textcolor{BrickRed}{$\sqrt{\alpha}$}}] {} (3+1);
			
			\draw[->, BrickRed, line width=.12em] (3+1) edge[bend left] node[draw=none, midway, inner sep=0pt, minimum size=.05cm, label={[label distance = -.7em]335:\textcolor{BrickRed}{$\sqrt{\alpha}$}}] {} (2+1);
			
			\draw[->, BrickRed, line width=.12em] (2+1) edge[bend left] node[draw=none, midway, inner sep=0pt, minimum size=.05cm, label={[label distance = -.7em]225:\textcolor{BrickRed}{$\sqrt{\alpha}$}}] {} (1+1);
			
			\draw[->, BrickRed, line width=.12em] (3+1) edge[bend left=30] node[draw=none, midway, inner sep=0pt, minimum size=.05cm, label={[label distance = -.7em]above:\textcolor{BrickRed}{$\sqrt{\theta_1}$}}] {} (5+1);
			
			\draw[->, BrickRed, line width=.12em] (5+1) edge[bend left=30] node[draw=none, midway, inner sep=0pt, minimum size=.05cm, label={[label distance = -.7em]below:\textcolor{BrickRed}{$\sqrt{\theta_1}$}}] {} (3+1);
		\end{tikzpicture}
	\end{minipage}
	\begin{minipage}{.49\linewidth}
		\centering
		\begin{tikzpicture}
			\node[fill=Black, inner sep=0pt, minimum size=.16cm, label=left:$i_1$] (1) at (0,0) {};
			\node[fill=Black, inner sep=0pt, minimum size=.16cm, label=above:$j_1$] (2) at (.75,.75) {};
			\node[fill=Black, inner sep=0pt, minimum size=.16cm, label={[label distance = -.5em]below:${i_2=i_1}$}] (3) at (1.5,0) {};
			\node[fill=Black, inner sep=0pt, minimum size=.16cm, label=335:$j_2$] (4) at (2.25,.75) {};
			\node[fill=Black, inner sep=0pt, minimum size=.16cm, label=left:$i_3$] (5) at (3,1.5) {};
			\node[fill=Black, inner sep=0pt, minimum size=.16cm, label=above:$j_3$] (6)
			 at (3.75,2.25) {};
			\node[fill=Black, inner sep=0pt, minimum size=.16cm, label=below:${i_4=i_3}$] (7) at (4.5,1.5) {};
			\node[fill=Black, inner sep=0pt, minimum size=.16cm, label=right:$j_4$] (8) at (5.25,2.25) {};
			\node[fill=Black, inner sep=0pt, minimum size=.16cm, label=right:$i_1$] (9) at (6,3) {};
			
			\draw[-,Black,line width = .12em] (1) edge (2);			
			\draw[-,Black,line width = .12em] (2) edge (3);
			\draw[-,Black,line width = .12em] (5) edge (6);
			\draw[-,Black,line width = .12em] (6) edge (7);
			
			\draw[densely dotted,Black,line width = .12em] (3) edge (4);
			\draw[densely dotted,Black,line width = .12em] (4) edge (5);
			\draw[densely dotted,Black,line width = .12em] (7) edge (8);
			\draw[densely dotted,Black,line width = .12em] (8) edge (9);
		\end{tikzpicture}
	\end{minipage}
	\caption{A red admissible graph with a long cycle can be encoded in a bipartite path with marked edges as in the study of information-plus-noise models. The dotted lines contribute $\sqrt{\alpha}$ while the solid lines contribute $\sqrt{\theta_1}$. In the case $\gamma=1$, note that the path is not bipartite as $i$ and $j$-indices play the same role and we recover the case of \cite{FePe}}
\end{figure}
\paragraph{}When  $\gamma=1$ (more precisely if $\vert n_1/m-1\vert \ll (\ln m)^{-1}$),  this combinatorial question is the same as computing  the tracial moments of a Wigner matrix $W_m/\sqrt m$ of size $m\times m$ whose entries have variance $\theta_1$ and with non centered entries whose expected value is $\sqrt{\theta_3 \kappa/\psi m}.$ Indeed when $\gamma=1$, the Marchenko--Pastur distribution is simply the law of a squared Wigner random variable. It has been shown that if $\sqrt{\theta_3 \kappa/\psi}> \sqrt{\theta_1}$, the largest eigenvalue of $W_m/\sqrt m$ then behaves as 
$$\rho\left( \sqrt{\frac{\theta_3 \kappa}{\psi}}\right)\quad \text{ where }\quad\rho(x)=x+\frac{ \sqrt{\theta_1}}{x}.$$ In particular, there is an eigenvalue exiting the bulk of the spectrum in the large $m$ limit
and one has that $\E\text{Tr}\left( \frac{W_m}{\sqrt m}\right)^q= \rho(\sqrt{\theta_3 \kappa/\psi})^q (1+o(1)).$ Additionally, it is proved in \cite{FePe} that odd edges arise typically in a single long cycle (up to an error in the order of $q^2/n_0$).

One can be even more precise. Recall the definition of $\kappa_w$ and $\kappa_x$ from \eqref{kappa}. If $$\sqrt{\frac{\theta_3 \kappa_w}{\psi}}> \sqrt{\theta_1}\quad\text{and}\quad\sqrt{\frac{\theta_3 \kappa_x}{\psi}}> \sqrt{\theta_1},$$ then 
the contribution from graphs with one long cycle, (and admissible red graphs with one long cycle) is then from \cite{FePe}*{Theorem 4.1}
\[ 
(1+o(1))\left (\rho\left(\sqrt{\frac{\theta_3 \kappa_w}{\psi}}\right)^{2q}+ \rho\left(\sqrt{\frac{\theta_3 \kappa_x}{\psi}}\right)^{2q}\right ).\]

\paragraph{}When $\gamma>1$, one can see the edges of the long cycle as odd edges each contributing by a factor $\sqrt{\theta_3\kappa/\psi}$. Thus  the contribution of an admissible graph with a single long cycle is equal to its contribution to the moment  $m^{-q}\mathbb{E} \text{Tr}T^q $ where $T$ is the $n_1\times n_1$ information plus noise matrix
$$T=\frac{1}{m}\left (\sqrt{\theta_1}\tilde Z+\sqrt{\frac{\theta_3 \kappa}{n_0}} J\right )\left (\sqrt{\theta_1}\tilde Z+\sqrt{\frac{\theta_3 \kappa}{n_0} } J\right )^\top.$$ 
Observe that $\psi n_1\sim n_0$ so that the result is consistent with the case where $\gamma=1$.
Indeed, for such a model, a long cycle of length $2c$ labeled with $J$ edges only is weighted with normalization 
$$\frac{n_1^c m^c}{m^{c}}\left ( \sqrt{\frac{\theta_3\kappa}{n_0}} \right)^{2c}= \left(\frac{n_1}{n_0}\right)^c (\theta_3\kappa)^c\sim \left(\sqrt{\frac{\theta_3\kappa}{\psi}}\right)^{2c}$$
which does correspond to the weight in our nonlinear ensemble. 

Note that even though the tracial moments of the two models match, actually counting the number of these admissible graphs and understand the position of the largest eigenvalue using high moments is difficult and has not been done in the literature to our knowledge. Adding a bipartite structure to the analysis from \cite{FePe} makes the computations too difficult.  However, the position of the largest eigenvalue of such information-plus-noise models have been studied differently and we refer to \cite{Capitaine} for the precise location and behavior of this largest eigenvalue. 

We call $\rho'(\theta_3 )$ the top eigenvalue of the above information-plus-noise matrix ensemble and we see that since $q$ goes to infinity in such a way that $q^2\ll n_0$, the contribution of admissible graphs with a long cycle from \cite{Capitaine} is 
$$(1+o(1))\rho'(\theta_3 )^q.$$ This follows from the fact that contributions from paths with more than one long cycle of odd edges induces a cost in the order of $q^2n_0^{-1}$ and is thus negligible (as for $\gamma=1$ by simply modifying the arguments of \cite{FePe}). As a result the typical contribution to $m^{-q}\mathbb{E} \text{Tr}T^q $ comes from paths with one single long cycle at most: the largest eigenvalue then separates when the contribution of all graphs with a single long cycle exceeds that of trees of simple cycles.

To conclude the argument, one has to show that the contribution of admissible graphs with more than one long cycle also yields a negligible contribution for the nonlinear model. To do so, we are going to define a mapping between admissible graphs with one long cycle of length $2L\leqslant 8$ and admissible graphs with two long cycles. Consider a graph with one long cycle, it can be represented as a cycle of certain length $2L$ with a tree of cycle of length 2 attached to each vertex of the cycle. 

To construct an admissible graph with two cycles from a given one with a unique long cycle, first consider two vertices say $v_1$ and $v_2$ on the long cycle such that we have $\d(v_1,v_2)=2k$ for some $k\geqslant 2$ where $\d$ denotes the minimum number of edges in the long cycle between $v_1$ and $v_2.$ At those chosen vertices, we consider the tree of cycles of length 2 attached to it and we choose a vertex in each tree (note that we can choose again $v_1$ or $v_2$). All in all we can bound the number of choices by $q^4$ since there are at most $q$ vertices in the graph. This is illustrated in Figure \ref{fig:choice}.
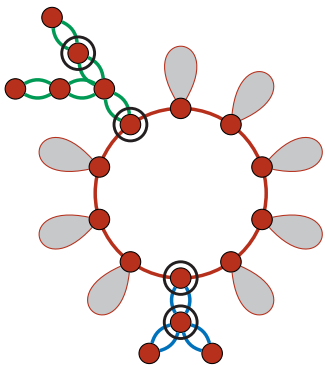
\begin{figure}[!ht]
	\centering
	\begin{tikzpicture}[scale=.9]
		\draw[-, BrickRed, line width = .12em] (0,0) circle (1.25cm);
		
		\foreach \a in {1,2,...,10}{
			\node[fill=BrickRed, inner sep = 0pt, minimum size=.27cm] (\a) at (360/4+\a*360/10: 1.25cm) {};
		}
		
		\foreach \a in {2,3,...,10}{
			\ifthenelse{\a=5}{}{
				\begin{scope}[on background layer]
					\path [draw, BrickRed, fill=Gray!50] (\a) to [loop above, looseness=10, min distance=1.2cm, out=60+\a*36, in=120+\a*36] (\a);
			\end{scope} }
		}
		
		\node[fill = BrickRed, inner sep=0pt, minimum size=.27cm] (11) at (126:1.9cm) {};
		\node[fill = BrickRed, inner sep=0pt, minimum size=.27cm] (12) at (126:2.55cm) {};
		\node[fill = BrickRed, inner sep=0pt, minimum size=.27cm] (13) at (126:3.2cm) {};
		\node[fill = BrickRed, inner sep=0pt, minimum size=.27cm] (14) at ($(11)+(180:.65cm)$) {};
		\node[fill = BrickRed, inner sep=0pt, minimum size=.27cm] (15) at ($(11)+(180:1.3cm)$) {};
		
		\node[fill = BrickRed, inner sep=0pt, minimum size=.27cm] (16) at (270:1.9cm) {};
		\node[fill = BrickRed, inner sep=0pt, minimum size=.27cm] (17) at ($(16)+(225:.65cm)$) {};
		\node[fill = BrickRed, inner sep=0pt, minimum size=.27cm] (18) at ($(16)+(315:.65cm)$) {};
		
		\draw[-,ForestGreen, line width = .12em] (1) edge[bend left] (11);
		\draw[-,ForestGreen, line width = .12em] (1) edge[bend right] (11);
		
		\draw[-,ForestGreen, line width = .12em] (11) edge[bend left] (12);
		\draw[-,ForestGreen, line width = .12em] (11) edge[bend right] (12);
		
		\draw[-,ForestGreen, line width = .12em] (12) edge[bend left] (13);
		\draw[-,ForestGreen, line width = .12em] (12) edge[bend right] (13);
		
		\draw[-,ForestGreen, line width = .12em] (11) edge[bend left] (14);
		\draw[-,ForestGreen, line width = .12em] (11) edge[bend right] (14);
		
		\draw[-,ForestGreen, line width = .12em] (14) edge[bend left] (15);
		\draw[-,ForestGreen, line width = .12em] (14) edge[bend right] (15);
		
		\foreach \a in {5,17,18}{
			\draw[-,RoyalBlue, line width = .12em] (16) edge[bend left] (\a);
			\draw[-,RoyalBlue, line width = .12em] (16) edge[bend right] (\a);
		}
		
		\draw[-, Black, line width = .1em] (1.center) circle (.24cm);
		\draw[-, Black, line width = .1em] (5.center) circle (.24cm);
		\draw[-, Black, line width = .1em] (12.center) circle (.24cm);
		\draw[-, Black, line width = .1em] (16.center) circle (.24cm);
	\end{tikzpicture}
	\caption{Example of choosing two vertices on the unique long cycle and one vertex each in the tree attached to those vertices. Here we see that the long cycle is of length $L=10>8$ and we have $\d(v_1,v_2)=4.$}
	\label{fig:choice}
\end{figure}

We create two cycles by identifying $v_1$ and $v_2$, note that since they are chosen so that $\d(v_1,v_2)$ is even, the two cycles are of even length and is consistent with the bipartite structure of the graph. Both trees that were attached to $v_1$ and $v_2$ are now attached to the same vertex which links the two long cycles. In particular, it is easy to see that one cycle is of length $\d(v_1,v_2)$ and the other is of size $L-\d(v_1,v_2).$ This is illustrated in Figure \ref{fig:glue}.

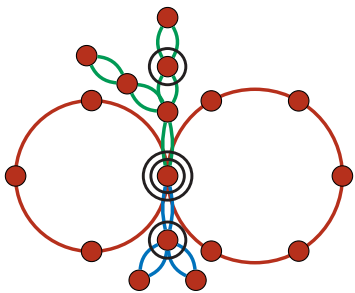
\begin{figure}[!ht]
	\centering
	\begin{tikzpicture}
		\draw[-,BrickRed, line width = .12em] (0,0) circle (1cm);
		\draw[-,BrickRed, line width = .12em] (2.15,0) circle (1.15cm);
		
		\foreach \a in {1,2,3,4}{
			\node[fill = BrickRed, inner sep = 0pt, minimum size=.27cm] (\a) at (\a*90:1cm) {};
		}
		\foreach \a in {1,2,3,4,5}{
			\node[fill = BrickRed, inner sep = 0pt, minimum size=.27cm] (\a+1) at ($(2.15,0)+(180+\a*360/6:1.15cm)$) {};
		}
		
		\node[fill = BrickRed, inner sep=0pt, minimum size=.27cm] (10) at ($(4)+(90:.85cm)$) {};
		\node[fill = BrickRed, inner sep=0pt, minimum size=.27cm] (11) at ($(4)+(90:1.45cm)$) {};
		\node[fill = BrickRed, inner sep=0pt, minimum size=.27cm] (12) at ($(4)+(90:2.1cm)$) {};
		\node[fill = BrickRed, inner sep=0pt, minimum size=.27cm] (13) at ($(10)+(145:.65cm)$) {};
		\node[fill = BrickRed, inner sep=0pt, minimum size=.27cm] (14) at ($(10)+(145:1.3cm)$) {};
		
		\node[fill = BrickRed, inner sep=0pt, minimum size=.27cm] (16) at ($(4)+(270:.85cm)$) {};
		\node[fill = BrickRed, inner sep=0pt, minimum size=.27cm] (17) at ($(16)+(235:.65cm)$) {};
		\node[fill = BrickRed, inner sep=0pt, minimum size=.27cm] (18) at ($(16)+(305:.65cm)$) {};
		
		\draw[-,ForestGreen, line width = .12em] (4) edge[bend left=12] (10);
		\draw[-,ForestGreen, line width = .12em] (4) edge[bend right=12] (10);
		
		\draw[-,ForestGreen, line width = .12em] (10) edge[bend left] (11);
		\draw[-,ForestGreen, line width = .12em] (10) edge[bend right] (11);
		
		\draw[-,ForestGreen, line width = .12em] (11) edge[bend left] (12);
		\draw[-,ForestGreen, line width = .12em] (11) edge[bend right] (12);
		
		\draw[-,ForestGreen, line width = .12em] (10) edge[bend left] (13);
		\draw[-,ForestGreen, line width = .12em] (10) edge[bend right] (13);
		
		\draw[-,ForestGreen, line width = .12em] (13) edge[bend left] (14);
		\draw[-,ForestGreen, line width = .12em] (13) edge[bend right] (14);

		\foreach \a in {17,18}{
			\draw[-,RoyalBlue, line width = .12em] (16) edge[bend left] (\a);
			\draw[-,RoyalBlue, line width = .12em] (16) edge[bend right] (\a);
		}
		
		\draw[-,RoyalBlue, line width = .12em] (16) edge[bend left=12] (4);
		\draw[-,RoyalBlue, line width = .12em] (16) edge[bend right=12] (4);
		
		\draw[-, Black, line width = .1em] (4.center) circle (.22cm);
		\draw[-, Black, line width = .1em] (4.center) circle (.32cm);
		\draw[-, Black, line width = .1em] (11.center) circle (.24cm);
		\draw[-, Black, line width = .1em] (16.center) circle (.24cm);
	\end{tikzpicture}
	\caption{We identify the two vertices chosen in the long cycle which create two cycles here of length 4 and 6. For simplicity, we removed the presence of trees of cycle of length 2 attached to every other vertex since they do not play a role in this mapping.}
	\label{fig:glue}
\end{figure}

Finally, we separate the two cycles by identifying the two vertices chosen in each attached tree of cycles to form a new tree between the two long cycles. Note that if we have chosen $v_1$ and $v_2$ themselves on the trees, the two cycles would be attached to themselves as in Figure \ref{fig:glue}. Note that there is a choice of which long cycle to put $v_1$ and $v_2$ which gives a factor $2$. This is illustrated in Figure \ref{fig:step3}.

\begin{figure}
	\centering
	\begin{tikzpicture}
		\draw[-,BrickRed, line width = .12em] (0,0) circle (1cm);
		
		\foreach \a in {1,2,3,4}{
			\node[fill = BrickRed, inner sep = 0pt, minimum size=.27cm] (\a) at (\a*90:1cm) {};
		}
		
		\node[fill = BrickRed, inner sep=0pt, minimum size=.27cm] (5) at ($(4)+(0:.65cm)$) {};
		\node[fill = BrickRed, inner sep=0pt, minimum size=.27cm] (6) at ($(4)+(0:1.3cm)$) {};
		
		\node[fill = BrickRed, inner sep=0pt, minimum size=.27cm] (7) at ($(5)+(90:.65cm)$) {};
		\node[fill = BrickRed, inner sep=0pt, minimum size=.27cm] (8) at ($(5)+(90:1.3cm)$) {};
		
		\node[fill = BrickRed, inner sep=0pt, minimum size=.27cm] (9) at ($(6)+(235:.65cm)$) {};
		\node[fill = BrickRed, inner sep=0pt, minimum size=.27cm] (10) at ($(4)+(0:1.95cm)$) {};
		\node[fill = BrickRed, inner sep=0pt, minimum size=.27cm] (11) at ($(6)+(305:.65cm)$) {};
		\node[fill = BrickRed, inner sep=0pt, minimum size =.27cm] (12) at ($(6)+(90:.65cm)$) {};
		
		\draw[-,BrickRed, line width = .12em] (4.1,0) circle (1.15cm);
		
		\foreach \a in {1,2,3,4,5}{
			\node[fill = BrickRed, inner sep = 0pt, minimum size=.27cm] (\a+1) at ($(4.1,0)+(180+\a*360/6:1.15cm)$) {};
		}
		
		\draw[-,ForestGreen, line width = .12em] (4) edge[bend left] (5);
		\draw[-,ForestGreen, line width = .12em] (4) edge[bend right] (5);
		
		\draw[-,ForestGreen, line width = .12em] (5) edge[bend left] (6);
		\draw[-,ForestGreen, line width = .12em] (5) edge[bend right] (6);
		
		\draw[-,ForestGreen, line width = .12em] (5) edge[bend left] (7);
		\draw[-,ForestGreen, line width = .12em] (5) edge[bend right] (7);
		
		\draw[-,ForestGreen, line width = .12em] (7) edge[bend left] (8);
		\draw[-,ForestGreen, line width = .12em] (7) edge[bend right] (8);
		
		\draw[-,ForestGreen, line width = .12em] (6) edge[bend left] (12);
		\draw[-,ForestGreen, line width = .12em] (6) edge[bend right] (12);
		
		\draw[-,RoyalBlue, line width = .12em] (6) edge[bend left] (9);
		\draw[-,RoyalBlue, line width = .12em] (6) edge[bend right] (9);
		
		\draw[-,RoyalBlue, line width = .12em] (6) edge[bend left] (10);
		\draw[-,RoyalBlue, line width = .12em] (6) edge[bend right] (10);
		
		\draw[-,RoyalBlue, line width = .12em] (6) edge[bend left] (11);
		\draw[-,RoyalBlue, line width = .12em] (6) edge[bend right] (11);
		
		\foreach \a in {1,2,...,5}{
			\begin{scope}[on background layer]
				\path [draw, BrickRed, fill=Gray!50] (\a+1) to [loop above, looseness=10, min distance=1.2cm, out=150+\a*60, in=210+\a*60] (\a+1);
			\end{scope} 
		}
		
		\foreach \a in {1,2,3}{
			\begin{scope}[on background layer]
				\path [draw, BrickRed, fill=Gray!50] (\a) to [loop above, looseness=10, min distance=1.2cm, out=30+\a*90, in=330+\a*90] (\a);
			\end{scope} 
		}

		\draw[-, Black, line width = .1em] (4.center) circle (.24cm);
		\draw[-, Black, line width = .1em] (10.center) circle (.24cm);
		\draw[-, Black, line width = .1em] (6.center) circle (.22cm);
		\draw[-, Black, line width = .1em] (6.center) circle (.32cm);
	\end{tikzpicture}
	\caption{The two trees that were attached to $v_1$ and $v_2$ are now glued through the identification of the two vertices chosen initially. We could have chosen to attach the green tree to the cycle of the right and the blue one to the left instead.}
	\label{fig:step3}
\end{figure}
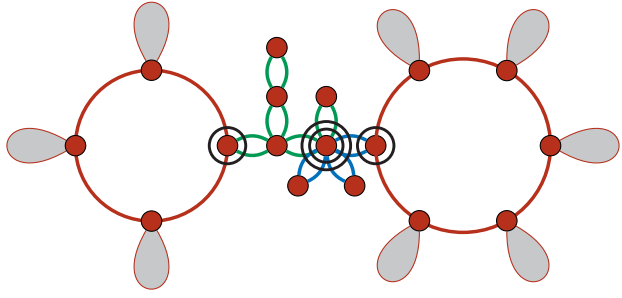

This mapping is onto on the graphs with two long cycles but not one-to-one as we overcount graphs when we glue the trees at the final step since there are many ways to glue two trees to obtain the same tree. Note also that this construction create an additonal identification between vertices (either an $i$ or a $j$ identifications) which is consistent with the fact that we have gain one cycle since the total number of cycle is $I_i+I_j+1$, finally we can bound
\[
\sum_{I_i,I_j=0}^q \left(\A(q, I_i,I_j+1,I_i+I_j)+\A(q,I_i+1,I_j,I_i+I_j)\right)
\leqslant Cq^4\sum_{I_i,I_j=0}^q \A(q,I_i,I_j,I_i+I_j).
\]
In particular, if we consider the contribution (with the corresponding weights) of the graphs with two long cycles we have
\begin{multline*}
\frac{\psi}{n_0}\sum_{I_i,I_j=0}^q\A(q,I_i,I_j,I_i+I_j-1)\theta_1^{I_i+I_j-1}\left(\frac{\theta_3\kappa}{\psi}\right)^{q-I_i-I_j+1}\gamma^{I_j}
\\\leqslant
C\frac{\theta_3\kappa q^4}{\theta_1 n_0}
\sum_{I_i,I_j=0}^q\A(q,I_i,I_j,I_i+I_j)\theta_1^{I_i+I_j}\left(\frac{\theta_3\kappa}{\psi}\right)^{q-I_i-I_j}\gamma^{I_j}.
\end{multline*}
We see that the sum in the right hand side is the contribution of graphs with a unique long cycle and thus we obtain that the contribution from graphs with two long cycles is smaller by a factor of $Cq^4n_0^{-1}=o(1)$ for $q\leqslant (\log n_1)^{1+\alpha}$. 

If we consider graphs with a given number of long cycles $N$, we can perform the same construction to bound the contribution by the one from admissible graphs with $N-1$ long cycles and recursively to the contribution of admissible graphs with a unique long cycle. The construction is similar: we first choose a long cycle of length bigger than $8$ among the $N$ choices (we can bound crudely this number of choices by $q$), we then choose two vertices with an even distance greater than $4$ within the cycle (this gives a factor of $q^2$), since the graph is admissible at both these vertices we have an admissible graph attached to it instead of a tree of cycles of length $2$ and we still choose a given vertex in each admissible graph (this gives a factor of $q^2$) we then do the same final step by identifying the two vertices from the long cycle to create two long cycles and then identify the two vertices from the two admissible graphs and attach it between the two created long cycles. This is similar as the process illustrated in Figures \ref{fig:choice}, \ref{fig:glue}, and \ref{fig:step3}.

Overall we see that we can bound the number of choices by $Cq^5$ during this procedure. This gives,
\begin{align*}
\sum_{I_i,I_j=0}^q
\sum_{N=2}^{I_i+I_j+1}
&\left( 
\frac{\psi}{n_0}
\right)^{N-1}
\A(q,I_i,I_j,I_i+I_j+1-N)\theta_1^{I_i+I_j+1-N}\left(\frac{\theta_3\kappa}{\psi}\right)^{q-I_i-I_j-1+N}\gamma^{I_j}
\\
&\leqslant
\sum_{I_i,I_j=0}^q
\left(\sum_{N=2}^{I_i+I_j+1}
\left(C\frac{\theta_3\kappa q^5}{\theta_1 n_0}\right)^{N-1}
\right)
\A(q,I_i,I_j,I_i+I_j)\theta_1^{I_i+I_j}\left(\frac{\theta_3\kappa}{\psi}\right)^{q-I_i-I_j}\gamma^{I_j}
\\
&\leqslant 
C\frac{\theta_3\kappa q^6}{\theta_1 n_0}
\sum_{I_i,I_j=0}^q
\A(q,I_i,I_j,I_i+I_j)\theta_1^{I_i+I_j}\left(\frac{\theta_3\kappa}{\psi}\right)^{q-I_i-I_j}\gamma^{I_j}
\end{align*}
where in the last inequality we bounded the number of long cycles by $q$.

Finally, we see that, for both cases $\gamma=1$ or $\gamma >1$, rewriting the sum in terms of number of cycles of length $2$ instead of long cycles,
\begin{equation}\sum_{ I_i, I_j=0}^q\sum_{b=0}^{I_i+I_j-1} 
\mathcal{A}(q, I_i,I_j,b) \phi^{I_j}\psi^{-b+I_i}\theta_1^{b}\left(\frac{\theta_3\kappa}{\psi}\right)^{q-b} d_{\max}^{2(N-1)}n_0^{b-I_i-I_j}\leqslant \frac{\Vert T\Vert^q}{n_0}(Cq^7+o(1)).
\end{equation}
The above follows from majorizing $d_{\max}\leq q$ and comparing the contribution in each ensemble for a given number of long cycles. 

The analysis of the non admissible graphs follows the same lines as in \cite{BePe} and yields a contribution which is negligible with respect to that of the admissible graphs. The arguments are skipped here. This finishes the proof of Proposition \ref{prop:even}.\end{proof}


\section{Proof of main results for general activation functions \label{sec:5}}
This section is devoted to generalizing the two previous sections to general activation functions. When $\theta_3(f)=0$, we saw the example of an odd polynomial, we generalize to general activation functions by examining the impact of $\theta_3(f)=0$ on the combinatorics involving even monomials. We show that long cycles in admissible graphs can only contribute $\theta_2(f)$ in this case. When $\theta_2(f)=0$, the contribution of odd monomials is similar as in \cite{BePe} and only graphs with a single long cycle contribute to the spectral moments. When $\theta_2(f)\not=0$, the limiting e.e.d. is no longer the Marchenko--Pastur distribution and as in the case where $f$ is odd, the moments of the spectral distribution is a combined contribution of all admissible graphs with long cycles. In addition, when $\theta_3(f)\not=0$ and since $q\sim (\log n_1)^{1+\alpha}$, one cannot neglect the contribution from a long cycle with bridges added to the matchings of the blue indices.

\subsection{Proofs of Theorems \texorpdfstring{\ref{theo:edge} and \ref{theo:sep_easy}}{2.3 and 2.4}} 
\begin{proof}[Proof of Theorem \ref{theo:edge}]
We now need to consider the contribution of even monomials in the combinatorics from Section \ref{sec:3} since while odd functions satisfy $\theta_3(f)=0$ (since $f''$ is odd), the set of such functions is larger. To do so, we can consider the analysis done in Section \ref{sec:4} which shows that the leading contribution stems from matchings giving a contribution involving $\theta_3(f)$. We now focus on the case of an even monomial.  The contribution of {non admissible graphs} is controlled similarly as for odd monomials, since this contribution does not depend on the function but only on the initial matching and is developed below. The contribution of {admissible} graphs is similar but the matchings are different. 

Firstly, remember that the function is centered so that $\E[f(\mathcal{N}(0,1)]=0$. In particular, when considering a matching within a red cycle, this prevents performing a perfect matching in any niche between the $k$ (which is even) blue vertices. At least one additional identification must be done in each niche, and it is seen in Section \ref{sec:4} that the leading contribution for a cycle of length $q$ is of order $n_1^{-1}\theta_3(P_k)^q$. This is illustrated in Figure \ref{fig:matchadmisseven}. Note that if $q\sim (\log n_1)^{1+\alpha_1}$ and $\theta_3$ is large enough, this contribution can become leading as this is only compensated by $n_1^{-1}$. This is the reason why the largest eigenvalue can escape the bulk of the spectrum. However, since we suppose that $\theta_3(f)=0$, the contribution of such matchings vanishes as $k$ grows with $n_1$.

We need to look at the contributions of lower order. For such a contribution to possibly be leading when $q\sim (\log n_1)^{1+\alpha_1},$ it needs to involve all niches simultaneously so that the combinatorial factor is to the power $q$ (the length of the cycle). However, we see that such matchings are compensated by $n_1^{-q}$ and thus subleading even when considering large moments. These matchings are described  in Figure \ref{fig:subleadeven}. 
\begin{figure}[!ht]
	\centering
	\begin{minipage}{.45\linewidth}
		\centering
		\begin{tikzpicture}
			\draw[-,BrickRed,line width = .12em] (0,0) circle (1cm);
			\node[fill=BrickRed,inner sep=0pt, minimum size=.27cm] (2) at (1,0) {};
			\node[fill=BrickRed,inner sep=0pt, minimum size=.27cm] at (-1,0) {};
			\node[fill=BrickRed,inner sep=0pt, minimum size=.27cm] (1) at (0,1) {};
			\node[fill=BrickRed,inner sep=0pt, minimum size=.27cm] at (0,-1) {};
			\foreach \a in {1,2,...,19}{
			\ifthenelse{\a=5 \OR \a=10 \OR \a=15}{}{\node[fill=RoyalBlue, inner sep = 0pt, minimum size=.12cm] (\a+1) at (360/4+\a*360/20: 1cm) {};}
			}
			
			\draw[-, RoyalBlue, line width=.12em] (4+1) edge[bend left=60]  (6+1);
			\draw[-, RoyalBlue, line width=.12em] (3+1) edge[bend left=60]  (7+1);
			\draw[-, RoyalBlue, line width=.12em] (6+1) edge[bend left=60]  (7+1);
			\draw[-, RoyalBlue, line width=.12em] (3+1) edge[bend left=60]  (4+1);
			
			\draw[-, RoyalBlue, line width=.12em] (9+1) edge[bend left=60]  (11+1);
			\draw[-, RoyalBlue, line width=.12em] (8+1) edge[bend left=60]  (12+1);
			\draw[-, RoyalBlue, line width=.12em] (11+1) edge[bend left=60]  (12+1);
			\draw[-, RoyalBlue, line width=.12em] (8+1) edge[bend left=60]  (9+1);
			
			\draw[-, RoyalBlue, line width=.12em] (14+1) edge[bend left=60]  (16+1);
			\draw[-, RoyalBlue, line width=.12em] (13+1) edge[bend left=60]  (17+1);
			\draw[-, RoyalBlue, line width=.12em] (16+1) edge[bend left=60]  (17+1);
			\draw[-, RoyalBlue, line width=.12em] (13+1) edge[bend left=60]  (14+1);
			
			\draw[-, RoyalBlue, line width=.12em] (19+1) edge[bend left=60]  (1+1);
			\draw[-, RoyalBlue, line width=.12em] (18+1) edge[bend left=60]  (2+1);
			\draw[-, RoyalBlue, line width=.12em] (1+1) edge[bend left=60]  (2+1);
			\draw[-, RoyalBlue, line width=.12em] (18+1) edge[bend left=60]  (19+1);
		\end{tikzpicture}
	\end{minipage}
	\begin{minipage}{.45\linewidth}
		\centering
		\begin{tikzpicture}
			\draw[-,BrickRed,line width = .12em] (0,0) circle (1cm);
			\node[fill=BrickRed,inner sep=0pt, minimum size=.27cm] (2) at (1,0) {};
			\node[fill=BrickRed,inner sep=0pt, minimum size=.27cm] at (-1,0) {};
			\node[fill=BrickRed,inner sep=0pt, minimum size=.27cm] (1) at (0,1) {};
			\node[fill=BrickRed,inner sep=0pt, minimum size=.27cm] at (0,-1) {};
			\foreach \a in {1,2,...,19}{
			\ifthenelse{\a=5 \OR \a=10 \OR \a=15}{}{\node[fill=RoyalBlue, inner sep = 0pt, minimum size=.12cm] (\a+1) at (360/4+\a*360/20: 1cm) {};}
			}
			
			\draw[-, RoyalBlue, line width=.12em] (1+1) edge[bend left=60]  (2+1);
			\draw[-, RoyalBlue, line width=.12em] (2+1) edge[bend left=60]  (3+1);
			\draw[-, RoyalBlue, line width=.12em] (3+1) edge[bend left=60]  (4+1);
			\draw[-, RoyalBlue, line width=.12em] (1+1) edge[bend left=80]  (4+1);
			
			\draw[-, RoyalBlue, line width=.12em] (6+1) edge[bend left=60]  (7+1);
			\draw[-, RoyalBlue, line width=.12em] (7+1) edge[bend left=60]  (8+1);
			\draw[-, RoyalBlue, line width=.12em] (8+1) edge[bend left=60]  (9+1);
			\draw[-, RoyalBlue, line width=.12em] (6+1) edge[bend left=80]  (9+1);
			
			\draw[-, RoyalBlue, line width=.12em] (11+1) edge[bend left=60]  (12+1);
			\draw[-, RoyalBlue, line width=.12em] (12+1) edge[bend left=60]  (13+1);
			\draw[-, RoyalBlue, line width=.12em] (13+1) edge[bend left=60]  (14+1);
			\draw[-, RoyalBlue, line width=.12em] (11+1) edge[bend left=80]  (14+1);
			
			\draw[-, RoyalBlue, line width=.12em] (16+1) edge[bend left=60]  (17+1);
			\draw[-, RoyalBlue, line width=.12em] (17+1) edge[bend left=60]  (18+1);
			\draw[-, RoyalBlue, line width=.12em] (18+1) edge[bend left=60]  (19+1);
			\draw[-, RoyalBlue, line width=.12em] (16+1) edge[bend left=80]  (19+1);
			
		\end{tikzpicture}
	\end{minipage}
	\caption{Lower order matchings on a cycle of length $4$. We obtain a combinatorial factor of $\theta_5(f)^q=\E[f^{(4)}(\mathcal{N}(0,1))]^{2q}$ which is compensated by a factor of $n_1^{-q-1}.$ }
	\label{fig:subleadeven}
\end{figure}
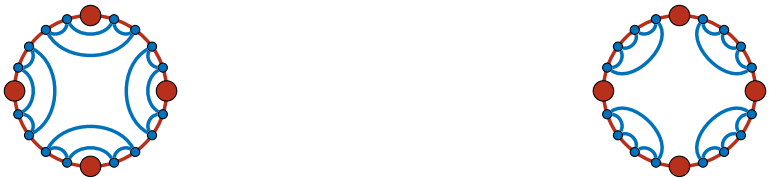

In any case, from this analysis and the one developed in \cite{BePe}*{Section 3.2}, we see that even monomials, and actually all contributions from the even part of $P_k$ do not contribute when analyzing moments of order $q\sim(\log n_1)^{1+\alpha_1}$ if $\theta_3(f)=0$.

Our next task is to show that the largest eigenvalue of $M_k$ cannot exceed $\bf{u_+}+\delta$ for any $\delta>0$ with probability arbitrarily close to 1. This is done using Proposition \ref{prop:highmoment} with the method of high moments \cite{furedi1981eigenvalues} and Markov's inequality :
\begin{equation}\label{eq:lambda1}
	\mathds{P}(\lambda_1(M_k) >{\bf{u_+}}+\delta) \leqslant \frac{\mathbb{E} \text{Tr}\, M_k^{2q}}{({\bf{u_+}}+\delta)^{2q}}. 
\end{equation}
	Using the convergence of the empirical eigenvalue distribution, one deduces that for any $\delta>0$,
	\[
	\mathds{P}\left(
	\lambda_{1}(M_k)<\bf{u_+}-\delta
	\right)\rightarrow 0.
	\]
	Now, for the other inequality one may check, from \eqref{eq:lambda1}, that we simply need to bound $\mathds{E}\mathrm{Tr}\, M^{2q}_k$ using Proposition \ref{prop:highmoment}. We can see that even for $k>\frac{\log n_1}{\log \log n_1},$ one has that $\overline{m}_{2q}^{(P_k)}\leqslant 2\mathbf{u_+}^{2q}$, by Proposition \ref{prop:highmoment}. Now, injecting this bound for the control of the largest eigenvalue we have that
	\[
	\mathds{P}\left(
	\lambda_{n_1}(M_k)>\bf{u_+}+\delta
	\right)
	\leqslant
	2n_1\left(
	\frac{\bf{u_+}}{\bf{u_+}+\delta}
	\right)^{2q}\xrightarrow[n_1\rightarrow \infty]{} 0.
	\]
	Thus the convergence of the largest eigenvalue of $M_k$ to the edge of the support implies the convergence of the largest eigenvalue of $Y^{(a_k)}$, by the bound of the spectral radius from \eqref{eq:boundspecradius}. 
\end{proof}

\paragraph{Proof of Theorem \ref{theo:sep_easy}}  Firstly, note that the contribution of odd monomials when $\theta_2(f)=0$ has been done in \cite{BePe} and is not leading even as $q\sim (\log n_1)^{1+\alpha_1}$ so that only the contribution of admissible graphs with at most long cycle is leading. In particular, the matching within the long cycle can only be performed such that it contributes $(\theta_3(f)\psi^{-1}\kappa)$ where $c$ is the length of the cycle.  We now need to finish the proof of Theorem \ref{theo:sep_easy} by allowing $k$ to grow as large as $\frac{\log n_1}{\log \log n_1}.$ The same arguments as in Proposition \ref{prop:highmoment} can be used to handle the case where $k\leqslant k_0$ and $k$ greater. This follows from the fact that any admissible graph can be interpreted as a contributing path in $m^{-q}\mathbb{E} \text{Tr}T^q $, being typical (that is with a non negligible contribution) or not. The extension from a polynomial function to an arbitrary function $f$ follows the same lines as in Theorem \ref{theo:edge}.

\subsection{Proof of Theorem \texorpdfstring{\ref{theo:edge2}}{2.5}}

We now consider a general activation function so that we can have $\theta_3(f)\neq 0$ or $\theta_2(f)\neq 0$. Firstly, note that admissible graphs with long cycles contribute to the asymptotic moment here even for finite $q$. Another possible way to maximize the number of pairwise disjoint vertices when there are both odd and even monomials is to identify the bridges in the niches associated to even monomials with the vertex in the ``long cycle'' for odd monomials as illustrated in Figure \ref{fig:matchmixed}. All other matchings lead to a lower order contribution. 

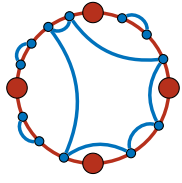
\begin{figure}[!ht]
	\centering
	\begin{tikzpicture}
		\draw[-,BrickRed,line width = .12em] (0,0) circle (1cm);
		\node[fill=BrickRed,inner sep=0pt, minimum size=.27cm] (2) at (1,0) {};
		\node[fill=BrickRed,inner sep=0pt, minimum size=.27cm] at (-1,0) {};
		\node[fill=BrickRed,inner sep=0pt, minimum size=.27cm] (1) at (0,1) {};
		\node[fill=BrickRed,inner sep=0pt, minimum size=.27cm] at (0,-1) {};
		\foreach \a in {1,2,...,4}{
			\ifthenelse{\a=5 \OR \a=10 \OR \a=15}{}{\node[fill=RoyalBlue, inner sep = 0pt, minimum size=.12cm] (\a+1) at (360/4+\a*360/20: 1cm) {};}
		}
	
		\foreach \a in {1,2,3}{
			\node[fill=RoyalBlue, inner sep = 0pt, minimum size=.12cm] (\a+2) at (180+\a*360/16: 1cm) {};
		}
		
		\foreach \a in {1,2}{
			\node[fill=RoyalBlue, inner sep=0pt, minimum size=.12cm] (\a+3) at (270+\a*360/12: 1cm) {};
		} 
	
		\foreach \a in {1,2,3}{
			\node[fill=RoyalBlue, inner sep=0pt, minimum size=.12cm] (\a+4) at (\a*360/16:1cm) {};
		}
		
		\draw[-, RoyalBlue, line width=.12em] (1+1) edge[bend left=60]  (2+1);
		\draw[-, RoyalBlue, line width=.12em] (4+1) edge[bend left=60]  (3+1);
		\draw[-, RoyalBlue, line width=.12em] (1+4) edge[bend left]  (1+1);
		\draw[-, RoyalBlue, line width=.12em] (2+1) edge[bend left]  (3+2);
		
		\draw[-, RoyalBlue, line width=.12em] (2+2) edge[bend left=60]  (1+2);
		\draw[-, RoyalBlue, line width=.12em] (3+2) edge[bend left]  (1+3);
		\draw[-, RoyalBlue, line width=.12em] (1+3) edge[bend left=60]  (2+3);
		\draw[-, RoyalBlue, line width=.12em] (2+3) edge[bend left]  (1+4);
		
		\draw[-, RoyalBlue, line width=.12em] (3+4) edge[bend left=60]  (2+4);
	\end{tikzpicture}
\caption{Example of a matching with mixed monomials. After performing perfect matchings in each niche, we identify the remaining vertices from odd niches to vertices in even niches using bridges. However, we see that $\mathds{E}[W_{11}^3]$ and $\mathds{E}[X_{11}^3]$ arises with this matching.}
\label{fig:matchmixed}
\end{figure}
Assume that a long cycle with length $2q$ has $c$ niches with an even monomial. 
Let $k_1, \ldots, k_{2q}$ be the degrees of the monomials in the niches assuming without loss of generality that $k_1, \ldots, k_{c}$ is even.  We identify the $\ell$-indices in the bridges with the vertex in each odd niche that is not matched inside the niche (equivalently the one that would be in the long blue cycle).
In this case the contribution of this cycle is then 
\begin{equation}
\frac{1}{n_1} \frac{1}{m^q n_0^{\sum_{i=1}^{2q} k_i/2}}m^q n_1^q n_0^{\sum_{i=1}^{c} (k_i/2-1)+\sum_{i=c+1}^{2q} (\frac{k_i-1}{2}) +1}=n_0^{-c/2}.
\end{equation}
Thus each such admissible graph may contribute to the moment with  a term in the order of $n_0^{-1/2}$ if a single long cycle has mixed monomials of odd and even parts of the polynomial. However, a bridge between a niche with an odd monomial and a niche between an even monomial contributes a factor of $\mathds{E}[W_{11}^3]$ or $\mathds{E}[X_{11}^3]$ which we consider to be zero. Thus, we see that the contribution from graphs with at least a long cycle is given by matchings where there are no mixed monomials within the long cycle and such a cycle (of length $2q$ ) either contributes $\theta_2(f)^{q}$ (if we only consider odd monomials) or $(n_0^{-1}\theta_3(f)\kappa)^q$ (if we only consider even monomials).  The other cases (more than one such long cycle or other matchings) are negligible.

Consider an admissible red graph with $c$ cycles of respective length $2l_1, 2l_2, \ldots, 2l_c$ and $b$ cycles of length $2$. 
The normalized contribution to $\mathbb{E} \text{Tr} M_{k}^q$ is then (from the previous sections)
\begin{equation} \label{star}
\theta_1(f)^b \prod_{i=1}^c\left ( \theta_2(f)^{2l_i}+\frac{\theta_3 (f)^{2l_i}}{ n_0} \left (\kappa_w^{2l_i}+\kappa_x^{2l_i}\right )\right )(1+o(1)).
\end{equation}

From the above one deduces that typical contributions may come from 
those admissible graphs with a single long cycle  assigned the weight $\theta_3(f) \kappa$: all the other long cycles are assigned the weight $\theta_2(f)$.  The contribution is then weighted by a factor $n_0^{-1}.$

Consider now the matrix 
\[ T:=\frac{1}{m}\left (\sqrt{\theta_1-\theta_2}\tilde Z +\sqrt{\theta_2}\frac{ \tilde W \tilde X}{\sqrt{n_0}}+2\sqrt{\theta_3 }J_2\right)\left (\sqrt{\theta_1-\theta_2}\tilde Z +\sqrt{\theta_2}\frac{ \tilde W \tilde X}{\sqrt{n_0}}+2\sqrt{\theta_3 }J_2\right)^\top .\] 
Due to the fact that the entries of $\tilde{Z}$ are centered, as that of $\tilde W$ and $\tilde X$, in order to maximize the number of pairwise distinct indices, the $\tilde Z$ edges appear in cycles of length 2 only, while that of $\tilde W\tilde X$ entries can appear in longer cycles. This is the basis for the proof of the convergence of the e.e.d.
Consider the ``odd edges'' corresponding to $\theta_3$ in the computation of the tracial moment, due to the fact that $\tilde Z$ edges appear in simple cycles and $\tilde W\tilde X$ edges in possibly longer cycles, it is not difficult to check that $J_2$ edges have to arise inside (long) cycles too. This follows from the fact that all vertices have even degree. Thus in order to maximize the number of pairwise distinct indices when some odd edge arises, the graph has to be admissible. Now if $\kappa=\max\{ \kappa_w, \kappa_x\}$, then the long cycles corresponding to $J_2$ have to be labelled by $W$ or $X$ (the one which maximizes the expectation) and the largest eigenvalue of $2\sqrt{\theta_3 }J_2J_2^\top$ is asymptotically given by $\theta_3\kappa\psi^{-1}$. 

It is not difficult to check that the contribution of admissible graphs with at least two long cycles with bridges (and other long cycle with weight $\theta_2$) does not exceed 
$$2\frac{q^2}{n_0}\Vert T\Vert^q\ll  \Vert T\Vert^q.$$ This follows from a similar analysis to that of the previous section as each long cycle would contribute a factor of $n_0^{-1}$.  
We now need to show that the contribution of non admissible graphs is negligible. First, when no $J_2$ edge arises, this follows from the same analysis as in the previous sections and as in \cite{BePe}. We need to consider apart the case where some $J_2$-edge arises and the graphs are non admissible. But then the argument follows from those similar to the last sections, when fixing the number of cycles whose weight is $\theta_3,$ since a $J_2$-edge consist exactly in computing the contribution of a cycle (in a non-admissible graphs in this case) given by $\theta_3$. 

Note that for the information-plus-noise matrix 
$$ \sqrt{\theta_1-\theta_2}\tilde Z +\sqrt{\theta_2}\frac{ \tilde W \tilde X}{\sqrt{n_0}}+2\sqrt{\theta_3 }J_2$$
 the admissible graphs with some long cycles, a single of which with weight  $\theta_3$, contribute in the large $m$ limit. The contribution of those graphs with more than one long cycle with weight $\theta_3$ can be analyzed as the non linear model ( like in \ref{star}).
This finishes the proof of Theorem \ref{theo:edge2}.
\vspace{\baselineskip}

Note that this gives a candidate to obtain a more precise result. One could try to estimate the location of the second largest eigenvalue. What combinatorics show is that \eqref{2.7} holds true: 
\begin{align*}\mathbb{E}\left (\text{Tr} M^q\right) = &(1+o(1))\sum_{I_i,I_j=0}^q\sum_{b=0}^{I_i+I_j}\sum_{L=4}^{q-b}{\mathcal{A}}(q,I_i,I_j,b; L)\theta_1(f)^b\theta_2(f)^{q-b-L}\psi^{I_i+1-q}\phi^{I_j}\theta_3(f)^L \\
&\times \left ( \kappa_w^L+ \kappa_x^L\right )\\
&+(1+o(1))\sum_{I_i,I_j=0}^q\sum_{b=0}^{I_i+I_j+1}{\mathcal{A}}(q,I_i,I_j,b)\theta_1(f)^b\theta_2(f)^{q-b}\psi^{I_i+1-q}\phi^{I_j}.
\end{align*}
Indeed, the contribution of non admissible graphs is in the order of $o(1)$ that of admissible graphs from which they are built. 
Combining two long graphs with bridges around an $i-$ and a $j-$index yields also a negligible contribution. As a consequence the spectrum has in the large $m$ limit some mass on both largest eigenvalues of $TT^\top/m$. However the bounds on fluctuations of the largest eigenvalue are not precise enough so that we can conclude about the behavior of the second one.

\bibliographystyle{abbrv}
\bibliography{bibli.bib}
\end{document}